\numberwithin{equation}{section}
\newcommand{\R}{\mathds{R}}\newcommand{\N}{\mathds{N}}\newcommand{\PS}{\mathds{P}}}%
\newcommand{\R}{\mathbb{R}}\newcommand{\N}{\mathbb{N}}\newcommand{\PS}{\mathbb{PS}}}
\newcommand{\D}{\Omega}
\newcommand{\Q}{\mathrm{Q}}
\newcommand{\Z}{\mathbf{Z}}
\newcommand{\CRd}{\operatorname{\mathbf{CR}}}
\newcommand{\CR}{\operatorname{CR}}
\newcommand{\T}{\mathcal{T}}
\newcommand{\Neigh}{\mathcal{N}}
\newcommand{\FC}{\mathcal{F}}
\newcommand{\FCi}{\mathcal{F}_i}
\newcommand{\FCb}{\mathcal{F}_{\partial}}
\newcommand{\ep}{\epsilon}
\newcommand{\A}{\boldsymbol{A}}
\newcommand{\F}{\boldsymbol{F}}
\newcommand{\I}{\mathcal{I}}
\newcommand{\ICR}{\ensuremath{\I^{\textsf{cr}}}}
\newcommand{\J}{\mathcal{J}}
\newcommand{\CC}{\boldsymbol{C}}
\newcommand{\EE}{\boldsymbol{E}}
\newcommand{\LL}{\boldsymbol{L}}
\newcommand{\QQ}{\boldsymbol{Q}}
\newcommand{\PP}{\boldsymbol{P}}
\newcommand{\WW}{\boldsymbol{W}}
\newcommand{\BB}{\boldsymbol{B}}
\newcommand{\nn}{\boldsymbol{n}}
\newcommand{\uu}{\boldsymbol{u}}
\newcommand{\vv}{\boldsymbol{v}}
\newcommand{\ww}{\boldsymbol{w}}
\newcommand{\ff}{\boldsymbol{f}}
\newcommand{\qq}{\boldsymbol{q}}
\newcommand{\pp}{\boldsymbol{p}}
\newcommand{\xx}{\boldsymbol{x}}
\newcommand{\yy}{\boldsymbol{y}}
\renewcommand{\d}{\mathrm{d}}
\newcommand{\DB}{\boldsymbol{\mathcal{D}}}
\newcommand{\DD}{\boldsymbol{\mathrm{D}}}
\newcommand{\nablah}{\nabla_{\!h}}
\newcommand{\divergenz}{\operatorname{div}}
\newcommand{\divh}{{\divergenz}_h}
\newcommand{\wh}{\widehat}
\newcommand{\w}{\omega}
\newcommand{\zero}{\boldsymbol{0}}
\newcommand{\bzero}{{\mathbf{0}}}
\renewcommand{\phi}{\varphi}
\renewcommand{\epsilon}{\varepsilon}
\newcommand{\norm}[1]{{\left\lVert{#1}\right\rVert}}
\newcommand{\abstmp}[2]{{#1\lvert{#2}#1\rvert}}
\newcommand{\abs}[1]{\abstmp{}{#1}}
\newcommand{\bigabs}[1]{\abstmp{\big}{#1}}
\newcommand{\meantmp}[2]{#1\langle{#2}#1\rangle}
\newcommand{\mean}[1]{\meantmp{}{#1}}
\providecommand{\settmp}[2]{{#1\{{#2}#1\}}}
\providecommand{\set}[1]{\settmp{}{#1}}
\newcommand{\Avg}[1]{\left \{\!\!\left \{ #1 \right \}  \!\!\right \}}
\newcommand{\Jump}[1]{\left \llbracket #1 \right \rrbracket}
\def\Xint#1{\mathchoice
   {\XXint\displaystyle\textstyle{#1}}%
   {\XXint\textstyle\scriptstyle{#1}}%
   {\XXint\scriptstyle\scriptscriptstyle{#1}}%
   {\XXint\scriptscriptstyle\scriptscriptstyle{#1}}%
   \!\int}
\def\XXint#1#2#3{{\setbox0=\hbox{$#1{#2#3}{\int}$}
     \vcenter{\hbox{$#2#3$}}\kern-.5\wd0}}
\def\dashint{\Xint-}
\theoremstyle{plain}
\newtheorem{theorem}{Theorem}[section]
\newtheorem{proposition}[theorem]{Proposition}
\newtheorem{lemma}[theorem]{Lemma}
\newtheorem{assumption}[theorem]{Assumption}
\newtheorem{remark}[theorem]{Remark}
\newtheorem{corollary}[theorem]{Corollary}
\newtheorem{method}[theorem]{Method}
\newcounter{descr}
  \def\descrhead{\refstepcounter{descr}\begin{description}\item[{\textbf{\thedescr}}] }
  \def\descrfin{ \end{description}}
\providecommand{\Ruzicka}{{R{\r u}{\v z}i{\v c}ka}}
\begin{document}

\title[Pressure robust discretizations of the nonlinear Stokes equations]
{Pressure robust finite element discretizations of the nonlinear Stokes equations}

\begin{abstract}
  We present first-order nonconforming Crouzeix-Raviart discretizations for the
  nonlinear generalized Stokes equations with
  $(r,\epsilon)$-structure. Thereby the velocity-errors are
  independent of the pressure-error; i.e., the method is pressure
  robust. This improves suboptimal rates previously
    experienced for non pressure robust methods.
\end{abstract}

\author[L. Diening]{Lars Diening}
\address[Lars Diening]{Universität Bielefeld \\
Fakultät für Mathematik \\ D-33615 Bielefeld \\ Germany}
\email{lars.diening@uni-bielefeld.de}

\author[A. Hirn]{Adrian Hirn}
\address[Adrian Hirn]{Hochschule Esslingen, Robert-Bosch-Stra\ss{}e 1, 73037 G\"oppingen, Germany}
\email{adrian.hirn@hs-esslingen.de}

\author[C. Kreuzer]{Christian Kreuzer}
\address[Christian Kreuzer]{TU Dortmund \\ Fakult{\"a}t f{\"u}r Mathematik \\ D-44221 Dortmund \\ Germany}
\email{christian.kreuzer@tu-dortmund.de}

\author[P. Zanotti]{Pietro Zanotti}
\address[Pietro Zanotti]{Universit\`{a} degli Studi di Milano\\
  Dipartimento di Matematica \\20133 Milano\\ Italy}
\email{pietro.zanotti@unimi.it}

\keywords{non-Newtonian Stokes, $p$-Stokes, finite element method,
a priori error estimates}

\subjclass{35J62,35J92,65N30,65N12,65N15,76A05,76M10}

\date{\today}
\maketitle

\section{Introduction}
\label{sec:introduction}

In this article, we study the finite element discretization of the nonlinear Stokes equations in a bounded domain $\D\subset\R^d$, $d\in\{2,3\}$,
\begin{align}\label{intro:eq1}
  -\divergenz \A(\DB \uu)+\nabla p = \ff
  \quad\text{and}\quad \divergenz \uu =0  \quad \text{in }\D,
  \qquad \uu=0 &&\text{on }\partial\D.
\end{align}
Here $\uu$ denotes the velocity, $p$ the kinematic pressure, and $\ff$
an external body force.  The symbol $\DB \uu$ stands either for the
velocity gradient $\nabla\uu$ or for its symmetric part
$\DD\uu=\frac12(\nabla\uu+(\nabla\uu)^T)$.  For given $r>1$, $\ep\geq
0$ the nonlinear function $\A:\R^{d\times d}\to\R^{d\times d}$
is supposed to have $(r,\ep)$-structure (or more
  generally $\phi$-structure; see Assumption \ref{assumption:nonlinear_operator}).  A prototypical example is
\begin{align}\label{intro:eq2}
  \A(\DB \uu)= \left(\ep^2+|\DB \uu|^2\right)^{\frac{r-2}{2}}\DB \uu\,.
\end{align}
For \(\epsilon=0\) the operator corresponds to the $r$-Laplacian, while for $\ep>0$ the operator regularizes the degeneracy of the $r$-Laplacian as the modulus of \(\DB\uu\) tends to zero. 
Equations with $(r,\ep)$–structure arise in various physical applications,
such as in the theory of plasticity, bimaterial problems in
elastic-plastic mechanics, non-Newtonian fluid mechanics, blood
rheology and glaciology; see
e.g. \cite{Liu_1999,Malek_1996,Helanow_2018}. 

The finite element (FE) approximation of equations with
$(r,\ep)$-structure has been widely studied, see e.g.
\cite{BarrettLiu:1993,Ruzicka:2006,Liu:2003,Barrett:1993,Barrett:1994,Belenki:2012,Hirn:2013}.
According to \cite{Ruzicka:2006,Belenki:2012}, it is
relevant to bound the error between $\uu$ and the
discrete velocity $\uu_h$ in the so-called natural- or \(\F\)-distance
$\norm{\F(\DB\uu)-\F(\DB\vv)}_2$ where $\F$ is a nonlinear vector
field adapted to the problem's $(r,\ep)$-structure; see~\eqref{def_F} for
the precise definition.  This error notion is equivalent to the
quasi-norm introduced by Barrett/Liu
\cite{BarrettLiu:1993,Barrett:1994}. 

For conforming exactly divergence-free discretizations 
C\'{e}a-type estimates
\begin{align}\label{intro:eq3}
  \norm{\F(\DB\uu)-\F(\DB\uu_h)}_2
  \lesssim  \inf_{\divergenz \vv_h=0}\norm{\F(\DB\uu)-\F(\DB\vv_h)}_2
\end{align}
as for the elliptic \(r\)-Laplace problem \cite[Theorem
2.1]{BarrettLiu:1993} can be obtained by straight forward application
of the elliptic techniques; see e.g.~\cite[Lemma 5.2]{Ruzicka:2006}.
We call a method satisfying~\eqref{intro:eq3} \textit{quasi-optimal
  and pressure-robust}: the velocity-error measured in the
natural distance is proportional to the corresponding best error 
and independent of the pressure.  Such discretizations have recently
received much attention for linear Stokes-equations; compare with
\cite{Guzman:2014,Guzman:2018,Scott:1985,Zhang:2023}.  In order to
replace the global approximation problem on the right hand side
of~\eqref{intro:eq3} by local approximation problems, usually local
Fortin operators are employed; compare
with~\cite[\S54.1]{Ern:2021}.  Such operators are typically not
perfectly local in the sense that the interpolated function on a mesh
element \(K\) depends on the function values in the neighbourhood
\(\omega_K\) of \(K\). This results in 
\begin{align}\label{intro:eq5}
  \norm{\F(\DB\uu)-\F(\DB\uu_h)}_2^2
  \lesssim  \sum_{K}\inf_{\QQ_K\in\R^{d\times d}}\norm{\F(\DB\uu)-\QQ_K}_{2;\omega_K}^2.
\end{align}
Corresponding results for the nonlinear Laplacian can be found in \cite{Ruzicka:2006}. Note that in the case of the linear Stokes problem, the neighbourhood \(\omega_K\) can be avoided using techniques from~\cite{Veeser:16} but for \(r\neq2\), properties of the natural distance prevent from a direct generalisation; compare with Remark~\ref{R:qo-discuss}. It is also noticeable that all above a priori bounds are limited to first order at most. Here again, the reason is the nonlinear structure of the natural distance, in particular, the use of Jensen's inequality while generalising stability properties to the natural distance.

For not exactly divergence-free discretizations, it was proved in~\cite{Belenki:2012} that
\begin{equation}\label{intro:eq6}
  \begin{aligned}
    \norm{\F(\DB\uu)-\F(\DB\uu_h)}_{2}^2
    &\lesssim \sum_{K}\inf_{\QQ_K\in\R^{d\times d}}\norm{\F(\DB\uu)-\QQ_K}_{2;\omega_K}^2 \\
    &+ \inf_{q_h\in\Q_h}\int_{\D}\left( |p-q_h|^2+|\DB\uu|^2\right)^{\frac{r'-2}{2}}|p-q_h|^2
  \end{aligned} 
\end{equation}
with $r'$ the H\"{o}lder conjugate of $r$. This estimate substantially differs from \eqref{intro:eq5} in that it is not pressure robust, 
i.e., the velocity-error is interfered by the pressure; compare
with~\cite{JohnLinkeMerdonNeilanRebholz:2018}.
In particular, for \(r\neq2\), this interference is of nonlinear nature and
may lead to suboptimal convergence rates even for regular solutions
with $\F(\DB\uu)\in W^{1,2}(\D)^{d\times d}$ and $p\in W^{1,r'}(\D)$. Indeed,
Belenki, Berselli, Diening and \Ruzicka{} \cite{Belenki:2012} derived
from~\eqref{intro:eq6} for 
the MINI element the the following error estimate in terms of the mesh-size \(h\)
\begin{align}\label{intro:eq4}
  \norm{\F(\DB\uu)-\F(\DB\uu_h)}_2
  \lesssim h^{\min\{1,\frac{r'}{2}\}}
  \,.
\end{align}
Similar results are derived by \cite{Hirn:2013} for a stabilized $\mathbb{Q}_1/\mathbb{Q}_1$-discretization. 
Comparing \eqref{intro:eq4} and~\eqref{intro:eq5}, 
we conclude that for $r>2$ the velocity-error estimate \eqref{intro:eq4} is
suboptimal with respect to the 
approximation properties of the discrete space. This is also confirmed by
numerical studies in \cite{Belenki:2012}.

A similar reasoning (i.e. the pressure-error depends on the
velocity-error)  limits the theoretical convergence of the pressure error in
\cite{Belenki:2012} to
\begin{align}\label{eq:pressure-error-old}
  \norm{p-p_h}_{r'}\lesssim h^{\min\{\frac{2}{r'},\frac{r'}{2}\}}.
\end{align}
In this case, however, the numerical results in~\cite{Belenki:2012}
show \(\mathcal{O}( h^{\min\{\frac{2}{r'},1\}})\)
convergence,
indicating that the bound for the pressure error possibly is not sharp for \(r>2\).

In \cite{Kaltenbach:2023b}, Kaltenbach/R\r{u}\v{z}i\v{c}ka obtain
first order convergence for the velocity-error of a local
discontinuous Galerkin method. The results, however are
\(\epsilon\)-dependent in the sense that the constants blow up fast as
\(\epsilon\to 0\). Moreover, they exploit additional regularity of
\(f\) and the involved jump penalizations result in non-monotone
schemes, with possible non-unique solutions; cf. \cite[\S
4]{Kaltenbach:2023a}.

In this paper we aim at achieving \eqref{intro:eq5} without the above
restrictions. To this end, we follow a different approach and extend
the quasi-optimal and pressure-robust approach
\cite{VerfuerthZanotti:2019,KreuzerZanotti:2020,KreuzerVerfuerthZanotti:2021}
of Kreuzer/Verf\"urth/Zanotti from linear to nonlinear Stokes
equations~\eqref{intro:eq1} with $(r,\epsilon)$-structure. In
particular, the methods have the following features: First, we propose
two discretizations with nonconforming Crouzeix-Raviart (CR) elements
that are stabilization-free and therefore yield monotone numerical
schemes. As a consequence, the numerical solution is unique.  Second,
a sophisticated \emph{smoothing} operator acting on the test functions
in the load term, allows to handle load-functions
$\ff\in\WW^{-1,r'}(\D)$ without 
requiring extra regularity of the data.  Third, this operator maps
element-wise divergence-free functions onto exactly
divergence-free functions, so to ensure the pressure robustness. Note
that the pressure robust methods of Linke et
al.~\cite{Linke:2014,Linke:2018} focus only on
the latter property and are not considered here. 

Based on these features, we prove a priori bounds of the form~\eqref{intro:eq5} (see
Theorems~\ref{cr:thm1} and \ref{T:aprioriCRDD1}). It is remarkable
that the results neither resort to additional 
regularity of the data and are robust for the critical limit
\(\epsilon\to 0\)
of the regularisation parameter
in~\eqref{intro:eq2}.

As a consequence, we obtain for regular velocities
$\F(\DB\uu)\in W^{1,2}(\D)^{d\times d}$ full first order convergence
independent of the pressure
\begin{align}\label{eq:BestRate}
  \norm{\F(\DB\uu)-\F(\DB_h\uu_h)}_2
  \lesssim \norm{h\nabla\F(\DB\uu)}_2\,.
\end{align}
As the pressure-error depends on the velocity-error, as a side effect,
the improved a priori bounds for the velocity also improve the bounds
for the pressure-error. In fact, if in addition \(p\in
W^{1,r'}(\Omega)\) then we obtain 
\(\norm{p-p_h}_{r'}=\mathcal{O}( h^{\min\{\frac{2}{r'},1\}})\), which
corresponds to the rates observed in the
numerical examples of \cite{Belenki:2012} for the MINI element using a
different method. 


The plan of the paper is as follows: In Section \ref{sec:prelim}, we 
introduce the \(\phi\)-structure (a generalization of the \((r,\ep)\)-structure), the nonlinear
problem~\eqref{intro:eq1} as well as the finite element
framework. Sections~\ref{sec:cr:modified_cr_discretization} and
~\ref{S:pr-cr-DB=DD} each presents the a priori analysis for the
velocity-error of a pressure robust method based on
Crouzeix-Raviart elements for \(\DB=\nabla\) and \(\DB=\DD\)
respectively. Section~\ref{sec:press-err} concerns the improved bounds
for the pressure-error followed by some numerical experiments in 
Section~\ref{sec:numerical_experiments}.

\section{Preliminaries}\label{sec:prelim}

To begin with, we clarify our notation and we state important
properties of the nonlinear operator $\A$ and of the natural distance.  
Further, we introduce the variational formulation of \eqref{intro:eq1}.

\subsection{Basic notation and function spaces}
Let $\R^+$ be the set of all positive real numbers, and $\R^+_0\coloneqq\R^+\cup\{0\}$. 
The Euclidean scalar-product of two vectors $\pp,\,\qq\in \R^d$ is denoted by
$\pp\cdot\qq$ and the Frobenius product of $\PP,\,\QQ\in\R^{d\times d}$
is defined by $\PP:\QQ\coloneqq\sum_{i,j=1}^dP_{ij}Q_{ij}$.  
We set \(|\pp|\coloneqq(\pp\cdot\pp)^{1/2}\) and $|\QQ|\coloneqq(\QQ:\QQ)^{1/2}$.

For an open set \(\omega\subset \R^d\), and $r\in [1,\infty]$, we denote by $L^r(\omega)$ the space of
scalar \(r\)-integrable functions on \(\omega\) with corresponding
norm $\norm{\cdot}_{r;\omega}$.
The space $L^r_0(\omega)$ is the
closed subspace of $L^r(\omega)$ of functions with
vanishing mean value.  
We equip the first order Sobolev space 
$W^{1,r}(\omega)$ 
with norm \(\|\cdot\|_{1,r;\omega}=(\|\cdot\|_{r;\omega}^r+\|\nabla
\cdot\|_{r;\omega}^r)^{1/r}\) and denote by
$W^{1,r}_0(\omega)$ its closed subspace of functions with vanishing
trace on $\partial\D$. For \(r\in(1,\infty)\) and $\frac{1}{r}+\frac{1}{r'}=1$, i.e. $r'=\tfrac{r}{r-1}$,  we have that
$W^{1,r}_0(\omega)$ is a reflexive Banach space with dual space
$W^{-1,r'}(\omega)=\big(W^{1,r}_0(\omega)\big)^*$
and for the dual pairing between $f\in W^{-1,r'}(\omega)$ and $v\in
W^{1,r}_0(\omega)$, we use the notation $\langle f,v\rangle_\omega$.  We
use the convention
\(W^{0,r}(\omega)=W_0^{0,r}(\omega)=L^r(\omega)\). Orlicz
  spaces $L^\phi(\omega)$ and Sobolev-Orlicz spaces
  $W^{1,\phi}(\omega)$ are introduced later in
  Section~\ref{sec:prop-nonl-oper}.

In~case of $\omega=\D$, we usually omit the index~$\D$, e.g. we write
$\norm{\cdot}_{r}$ instead of $\norm{\cdot}_{r;\D}$.  
Spaces of $\R^d$-valued functions are denoted with boldface type, though no distinction is made in the notation 
of norms and inner products; for instance, the norm in $\WW^{1,r}(\D)= (W^{1,r}(\D))^d$ is given by 
$\norm{\vv}_{1,r}=\left(\sum_{1\leq i\leq d}
  \norm{ v_i}_{1,r}^r\right)^{1/r}$. Thanks to Poincar{\'e} and Korn inequalities, an alternative norm on
$\WW^{1,r}_0(\D)$ is given by \(\|\DB \cdot\|_{r}\) if
\(r\in(1,\infty)\); compare e.g. with \cite[Theorem
6.10]{DieningRuzickaSchumacher:2010}. 

For a set \(\omega\subset \R^d\) with Hausdorff dimension \(d\) or
\(d-1\), we denote by \(|\omega|\) its $d$- or $(d-1)$-dimensional Hausdorff measure and for
\(f\in L^r(\omega),\) we use the
abbreviation 
$$
\langle f\rangle_\w\coloneqq\dashint_{\w}f\coloneqq\frac{1}{|\w|}\int_\w f
$$
for its mean value in \(\omega\).

In order to simplify the notation, we shall often denote \(a\lesssim b\) when
\(a\le cb\) for a constant \(c>0\) only depending on fixed but maybe
problem specific constants. For \(a\lesssim
b\lesssim a\) we shall also write \(a\eqsim b\).

\subsection{N-functions}
\label{sec:n-functions}

A convenient abstract framework for problems with an
\((r,\epsilon)\)-structure like~\eqref{intro:eq2} is based on so-called
N-functions, 
that are standard in the theory of Orlicz spaces;
cf. \cite{RaoRen:1991,KokilashviliKrbec:1991}. A continuous, convex,
and strictly monotone function $\psi:\R_0^+\to\R_0^+$ is
called an N-function if
\begin{align*}
  \psi(0)=0,\qquad \lim_{t\to 0}\frac{\psi(t)}{t}=0\qquad\text{and}\qquad
  \lim_{t\to\infty}\frac{\psi(t)}{t}=\infty.
\end{align*}
Thanks to the convexity of \(\psi\), there exists its right derivative $\psi'$, which is non-decreasing and satisfies $\psi'(0)=0$, $\psi'(t)>0$ for $t>0$ and $\lim_{t\to\infty}\psi'(t)=\infty$.

The conjugate of an N-function $\psi$ is defined by 
\begin{align*}
\psi^*(t)\coloneqq\sup_{s\geq 0}\left(st-\psi(s)\right)\quad \text{for all }t\geq 0
\end{align*}
and is again an N-function and we have \((\psi^*)^*=\psi\). As an
example: the conjugate of $\psi(t)=\frac 1r t^r$ is $\psi^*(t) =
\frac{1}{r'} t^{r'}$ with $\frac 1r + \frac 1{r'}=1$. 

For a quantitative numerical approach we require more regular
N-functions. We say that an N-function $\psi$ is uniform convex, if
additionally $\psi \in W^{2,1}_{\textrm{loc}}((0,\infty))$ (or
$C^1([0,\infty))$ and piecewise $C^2$) and 
\begin{align}
  \label{eq:uniformly-convex}
  1<r^- \coloneqq\inf_{t>0} \frac{\psi''(t)\,t}{\psi'(t)}+1
 \leq  \sup_{t>0} \frac{\psi''(t)\,t}{\psi'(t)}+1  \eqqcolon r^+<\infty.
\end{align}
We call $r^\pm$ the indices of uniform convexity of~$\psi$. In
the following we assume that $\psi$ is a uniformly convex
N-function. Note that uniform convexity rules out almost linear growth
as well as exponential growth. 

Let us recall the properties of uniformly convex N-functions
from~\cite[Appendix~B]{DieningFornasierTomasiWank:2020}. If $\psi$ is
uniformly convex with indices $r^-$ and $r^+$, then $\psi^*$ is
uniformly convex with indices $(r^+)'$ and $(r^-)'$ respectively
. 
Furthermore,
for all $s,t \geq 0$ there holds
\begin{align}
  \label{eq:phi-st}
  \min \set{s^{r^-},s^{r^+}} \,\psi(t) \leq \psi(st) \leq
  \max \set{s^{r^-},s^{r^+}} \,\psi(t).
\end{align}
As a consequence $\psi$ and $\psi^*$ satisfy the so-called
$\Delta_2$-condition with
\begin{align*}
  \psi(2t) \leq 
  2^{r^+}
  \psi(t) \quad
  \text{and}\quad
  \psi^*(2t)
  \leq 
                           2^{(r^-)'}
  \psi^*(t) ,\qquad &\text{for all $t\geq 0$}.
\end{align*}
By convexity, we have the quasi-triangle inequality
\begin{align}\label{eq:qtriangle}
  \psi(s+t) \leq \tfrac 12 \psi(2s) + \tfrac 12 \psi(2t) \leq 2^{r^+-1} \left(\psi(s)+\psi(t)\right)\quad\forall s,t\ge0.
\end{align}
It follows also from~\eqref{eq:phi-st} and $st \leq \psi(s)+\psi^*(t)$ that
\begin{align}\label{eq:deltaYoung}
  st\le \delta^{1-r^+} \psi(s)+\delta\psi^*(t),\qquad \text{ for all $s,t\ge0, \delta \in (0,1]$.}
\end{align}
Moreover, we have for all \(t\ge 0\) that
\begin{align}\label{eq:psi*psi'=psi}
  \psi(t)\eqsim t \psi'(t)\qquad\text{and}\qquad
  \psi^*(\psi'(t))\eqsim \psi(t)
\end{align}
with constants depending solely on $r^\pm$.

We next introduce the notion of shifted N-function first introduced in
\cite{Diening:2005}  and further developed for example in
\cite{Ruzicka:2006,Diening_2007,BelenkiDieningKreuzer:12,Kreuzer:2013,DieningFornasierTomasiWank:2020}. Here,
we use the version of
\cite[Appendix~B]{DieningFornasierTomasiWank:2020},
where also the self-contained proofs for the properties below
can be found.  

For a given uniformly convex N-function \(\psi\) with indices of uniform
convexity $r^\pm$, the family of shifted functions
$\{\psi_a\}_{a\geq 0}$ is defined by 
\begin{align}\label{def:shifted_phi_function}
  \psi_a(t)\coloneqq\int_0^t\psi_a'(s) \,\d s
  \qquad\text{ with }\qquad \psi_a'(t)\coloneqq\frac{\psi'(\max\set{a,t})}{\max \set{a,t}}t\,.
\end{align}
In the original work~\cite{Diening:2005}, the maximum $\max\set{a,t}$
was replaced by $a+t$. 
However, though equivalent, the
definition~\eqref{def:shifted_phi_function} has some technical
advantages. 
We have $(\psi_a)^* = (\psi^*)_{\psi'(a)}$ and the functions
$\psi_a$ are again uniformly convex and with indices of uniform
convexity $\min \set{r^-,2}$ and $\max \set{r^+,2}$.  Hence, $\psi_a$ and $(\psi_a)^*$
satisfy the $\Delta_2$-condition with constants independent of~$a\geq
0$. 

A first basic property states for all $\PP,\QQ\in\R^{d\times d}$ that
\begin{align}\label{eq:shift-symmetry}
  \psi_{|\PP|}(|\PP-\QQ|)\eqsim \psi_{|\QQ|}(|\PP-\QQ|)
  \quad\text{and}\quad\psi_{|\PP|}'(|\PP-\QQ|)\eqsim
  \psi_{|\QQ|}'(|\PP-\QQ|).
\end{align}
Moreover, for $t\geq 0$ and all $\PP,\QQ \in \R^{d \times d}$ we have
\begin{align}\label{eq:prime-shift-change}
  \bigabs{\psi'_{|\PP|}(t)-
    \psi'_{|\QQ|}(t)} & \lesssim \psi_{|\QQ|}'\big(\bigabs{\abs{\PP}-\abs{\QQ}}\big)\leq \psi_{|\QQ|}'(\abs{\PP-\QQ}),
\end{align}
where the hidden constant depends only on $r^\pm$.
Based on this, for any $\delta \in (0,1]$, all $t\geq 0$ and all $\PP,\QQ \in \R^{d \times d}$ there holds
\begin{subequations}\label{eq:change_of_shift}
  \begin{align}
    \psi_{|\PP|}(t)&\le
    C\, \delta^{1-r^+}\,\psi_{|\QQ|}(t)
    +\delta \abs{\F(\PP) - \F(\QQ)}^2\,,
    \label{eq:change_of_shift1}
    \\
    \left(\psi_{|\PP|}\right)^*(t)&\leq C\, \delta^{1-(r^-)'} \left(\psi_{|\QQ|}\right)^*(t)
    + \delta \abs{\F(\PP) - \F(\QQ)}^2\,,
    \label{eq:change_of_shift2}
  \end{align}
  where $C$ only depends on~$r^\pm$. These estimates are often called \emph{shift-change}.
\end{subequations}

When \(\DB=\DD\) we shall need the following Korn inequality on
bounded John domains, which can e.g. be found in~\cite[Theorem
6.13]{DieningRuzickaSchumacher:2010}. The precise definition of a John
domain is not important in this context, just note that it applies to
any 
bounded Lipschitz
domain. 
\begin{proposition}[{\cite[Theorem 6.13]{DieningRuzickaSchumacher:2010}}]
  \label{P:Korn1}
  Let $\psi$ be a uniformly convex N-function with indices~$r^\pm$. Then for a bounded John domain~$\omega\subset\R^d$ and all
  \(\ww\in\WW^{1,1}(\omega)\) we have 
  \begin{equation*}
    \begin{aligned}
      \int_\omega\psi\left(\left|\nabla\ww-\langle\nabla\ww\rangle_\omega\right|\right)
      &\lesssim \int_\omega \psi\left(\left|\DD\ww-\langle\DD\ww\rangle_\omega\right|\right)\,.
    \end{aligned} 
  \end{equation*}
  The hidden constant only depends on the constants $r^\pm$ of uniform
  convexity of \(\psi\) and the John constant
  of~$\omega$. Note that if $\ww \in \WW^{1,1}_0(\omega)$, then
  $\mean{\nabla \ww}_\omega=0$ and $\mean{\DD \ww}_\omega=0$. 
\end{proposition}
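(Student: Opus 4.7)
The plan is the classical Korn reduction combined with the Orlicz machinery from Section~\ref{sec:n-functions} and a Ne\v{c}as--Lions lemma on John domains. Writing $\nabla\ww=\DD\ww+\WW\ww$ with $\WW\ww\coloneqq\tfrac12(\nabla\ww-(\nabla\ww)^T)$ the skew-symmetric gradient, and noting that $\DD(A\xx)=\zero$ for any skew-symmetric matrix $A$, I would subtract the skew-affine map $\xx\mapsto A\xx$ with $A\coloneqq\mean{\WW\ww}_\omega$ from $\ww$. This leaves $\DD\ww$ unchanged, enforces $\mean{\WW\ww}_\omega=\zero$, and preserves the difference $\nabla\ww-\mean{\nabla\ww}_\omega$. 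The quasi-triangle inequality~\eqref{eq:qtriangle} applied to $\psi$ (with indices $r^\pm$) then reduces the claim to
\begin{equation*}
  \int_\omega \psi\bigl(|\WW\ww|\bigr) \lesssim \int_\omega \psi\bigl(|\DD\ww-\mean{\DD\ww}_\omega|\bigr).
\end{equation*}

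The second ingredient is the distributional Korn identity
\begin{equation*}
  \partial_k (\WW\ww)_{ij} = \partial_j (\DD\ww)_{ik}-\partial_i (\DD\ww)_{jk},
\end{equation*}
which holds for any $\ww\in\WW^{1,1}(\omega)$ and expresses derivatives of the skew part entirely through derivatives of~$\DD\ww$. Reading this as a negative-norm bound, $\nabla(\WW\ww)$ is controlled in $W^{-1,\psi}(\omega)$ by $\DD\ww-\mean{\DD\ww}_\omega$ in $L^\psi(\omega)$ via integration by parts against test functions in $\WW^{1,\psi^*}_0(\omega)$, where the mean-value subtraction is legitimate since those test functions vanish on $\partial\omega$. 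Coupling this with an Orlicz Ne\v{c}as/Lions lemma on~$\omega$---namely, $\|u\|_{L^\psi(\omega)}\lesssim\|\nabla u\|_{W^{-1,\psi}(\omega)}$ for zero-mean~$u$---and closing with the Young-type bound~\eqref{eq:deltaYoung} would deliver the desired estimate componentwise for the zero-mean entries $(\WW\ww)_{ij}$.

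The main obstacle is precisely this Ne\v{c}as/Lions lemma in the Orlicz setting on John domains, uniformly in $\psi$ with constants depending only on $r^\pm$ and the John constant of~$\omega$. Uniform convexity of~$\psi$ ensures that both $\psi$ and $\psi^*$ satisfy $\Delta_2$ with quantitative bounds, which is the hypothesis that makes the Orlicz theory work; by duality, the Ne\v{c}as lemma is equivalent to the existence of a Bogovski-type right inverse $\divergenz^{-1}\colon L^\psi_0(\omega)\to\WW^{1,\psi}_0(\omega)$ whose operator norm depends on~$\psi$ only through its $\Delta_2$-constants. Constructing such a Bogovski operator on a John domain requires a Whitney decomposition together with a chaining argument compatible with the John condition, and it is this technical step---carried out in~\cite[Theorem~6.13]{DieningRuzickaSchumacher:2010}---that does the real work. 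The remainder of the proof is a routine combination of the reduction, the Korn identity, and the shift-change estimates of Section~\ref{sec:n-functions}.
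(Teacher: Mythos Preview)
The paper does not prove Proposition~\ref{P:Korn1}; it is quoted from \cite[Theorem~6.13]{DieningRuzickaSchumacher:2010} and used as a black box, so there is no in-paper proof to compare against.

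Your outline is the standard route and is essentially how the cited reference proceeds: reduce via the skew-affine subtraction, invoke the second-order Korn identity to control $\nabla(\WW\ww)$ in a negative Sobolev space by $\DD\ww$, and close with a Ne\v{c}as--Lions/Bogovski\u{\i} argument. You correctly identify that the substantial content is the Orlicz Bogovski\u{\i} operator on John domains with constants depending only on the $\Delta_2$-constants, which is precisely what \cite{DieningRuzickaSchumacher:2010} constructs via a Whitney decomposition and John-chaining.

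One point deserves tightening. The Ne\v{c}as lemma as you phrase it is a \emph{norm} inequality $\|u\|_{L^\psi}\lesssim\|\nabla u\|_{W^{-1,\psi}}$, whereas the proposition is a \emph{modular} inequality $\int_\omega\psi(|\cdot|)\lesssim\int_\omega\psi(|\cdot|)$. The Young inequality~\eqref{eq:deltaYoung} alone does not bridge this gap. In \cite{DieningRuzickaSchumacher:2010} the modular bound comes from realizing the Bogovski\u{\i} operator as a Calder\'on--Zygmund-type singular integral, for which modular Orlicz estimates hold once both $\psi$ and $\psi^*$ satisfy $\Delta_2$; equivalently, one may pass from norm to modular by extrapolation, since the operator bound depends only on $r^\pm$. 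Either way, the closing step is not ``Young's inequality'' but rather this modular boundedness of the underlying operator.
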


\subsection{Properties of the nonlinear operator}
\label{sec:prop-nonl-oper}

In this section we state our assumptions on the nonlinear operator
$\A$ from~\eqref{intro:eq1}. To this end, let \(\phi\) be a fixed uniformly convex N-function
with indices of uniform convexity $r^\pm$.  For simplicity, we 
assume that $\phi \in C^2((0,\infty))$ (although
$\phi \in W^{2,1}_{\textrm{loc}}((0,\infty))$ would suffice in what follows). 

For a bounded open set~$\omega \subset \R^d$, we denote by
$L^\phi(\omega)$ and $W^{1,\phi}(\omega)$ the usual Orlicz
and Sobolev-Orlicz spaces; see \cite{RaoRen:1991}. In fact,
\(v\in L^\phi(\omega)\) if \(\int_\omega\phi(|v|)<\infty \) and \(v\in
W^{1,\phi}(\omega)\) if in addition 
\(\int_\omega\phi(|\nabla v|)<\infty \). The space  $L^\phi(\omega)$
is a reflexive Banach space equipped with the Luxembourg norm
\(\|v\|_\phi:=\inf\{\lambda\colon \int_\omega\phi(|v|/\lambda)\le
1\}\). 
By
$L^\phi_0(\omega)$ we denote the closed subspace of~$L^\phi(\omega)$ of 
functions with integral zero. 
The subspace $W^{1,\phi}_0(\omega)$ of Sobolev-Orlicz functions
with zero boundary values is a reflexive Banach space with norm
\(\|\nabla\cdot\|_{\phi}\) and 
we denote its dual space by
$W^{-1,\phi^*}(\omega)$.

The most prominent example for $\phi$ is
\begin{align}\label{def:phi_function}
	\phi(t)\coloneqq\int_0^t \phi'(s)\,\d s \qquad\text{with }\phi'(t)\coloneqq(\ep+t)^{r-2}t\,
\end{align}
with $1 < r < \infty$ and $\epsilon \geq 0$ and we say that
$\phi$ has $(r,\epsilon)$-structure.
In this case, we have  $L^\phi(\omega) = L^r(\omega)$ and $W^{1,\phi}(\omega) =
W^{1,r}(\omega)$. 
If $\epsilon=0$, then $r^\pm=r$
and if $\epsilon>0$, then $r^- = \min \set{r,2}$ and $r^+= \max
\set{r,2}$. This follows from 
\begin{align*}
  \frac{\phi''(t)\,t}{\phi'(t)} +1&= \frac{rt+2\epsilon}{t+\epsilon}.
\end{align*}
In particular, the
indices of uniform convexity of~$\phi$ are bounded independently
of~$\epsilon \geq 0$. As a consequence, all estimates that only depend
on the indices of uniform convexity are independent of~$\epsilon \geq
0$.
\begin{remark}
  Other examples of uniformly convex N-functions are
  \begin{enumerate}
  \item $\phi(t) = t^{r_1} +  t^{r_2}$ for $1<r_1 \leq r_2 < \infty$.
  \item $\phi(t) = t^r \log(e+t)$ for $1<r<\infty$.
  \end{enumerate}
\end{remark}

\begin{assumption}[Nonlinear operator]\label{assumption:nonlinear_operator}
  We assume that the nonlinear operator $\A:\,\R^{d\times d}\rightarrow\R^{d\times d}$ belongs to
  $C^{0}(\R^{d\times d},\R^{d\times d})\cap C^1(\R^{d\times d}\setminus\{\bzero\},\R^{d\times d})$ 
  and satisfies $\A(\bzero)=\bzero$. 
  Furthermore, we assume that  $\A$ has $\phi$-structure, i.e., there exist
  constants $C_0,C_1>0$, such
  that\footnote{For functions $g:\,\R^{d\times d}\rightarrow\R$ we use the notation $\partial_{kl}g(\QQ)
  \coloneqq\frac{\partial g(\QQ)}{\partial Q_{kl}}$.}
	\begin{subequations}\label{assumption:nonlinear_operator:eq}
  \begin{align}
    \sum_{i,j,k,l=1}^d\partial_{kl}A_{ij}(\QQ)P_{ij}P_{kl}&\ge
    C_0 \phi''(\abs{\QQ})|\PP|^2,\label{assumption:nonlinear_operator:1}\\
    |\partial_{kl}A_{ij}(\QQ)|&\leq C_1\phi''(\abs{\QQ})\label{assumption:nonlinear_operator:2}
  \end{align}
	\end{subequations}
  holds for all $\PP,\,\QQ\in\R^{d\times d}$ with $\QQ\neq \bzero$ and all $i,j,k,l\in\{1,\ldots,d\}$.
\end{assumption}
\begin{remark}
  If $\phi$ is a uniformly convex N-function, then $\A:\,\R^{d\times d}\rightarrow\R^{d\times d}$ defined by
  \begin{align*}
    A(\QQ) = \phi'(\abs{\QQ}) \frac{\QQ}{\abs{\QQ}}
  \end{align*}
  has automatically $\phi$-structure.
\end{remark}
\begin{remark}
  If $\phi$ has $(r,\epsilon)$-structure, then
  \begin{align*}
    \sum_{i,j,k,l=1}^d\!\!\!\partial_{kl}A_{ij}(\QQ)P_{ij}P_{kl}\ge \widetilde{C}_0(\epsilon+\abs{\QQ})^{r-2}|\PP|^2,\qquad
    |\partial_{kl}A_{ij}(\QQ)|\leq \widetilde{C}_1(\epsilon+\abs{\QQ})^{r-2}\,
  \end{align*}
  and
  \begin{align*}
    \phi_a(t)\eqsim\left(\ep+a+t\right)^{r-2} t^2\quad\text{as well
    as }\quad(\phi_a)^*\eqsim \left( (\ep+a)^{r-1}+t\right)^{r'-2} t^2
  \end{align*}
  uniformly in $t,a \geq 0$ with constants only depending on~$r$ (but
  not on~$\epsilon$).
\end{remark}

Several studies on the finite element analysis of the $r$-Laplace
equation (e.g.,
\cite{BarrettLiu:1993,Barrett:1993,Barrett:1994,Ruzicka:2006,Belenki:2012})
indicate that, for quasi-optimal error bounds, a certain error measurement adapted
to the structure of \(\phi\) is necessary. In order to introduce this
so-called natural distance, we define 
\begin{align}\label{def_F}
  \F:\,\R^{d\times
  d}\rightarrow\R^{d\times d}\qquad\text{by}\qquad\F(\QQ)\coloneqq \sqrt{\phi'(|\QQ|)|\QQ|}\,\frac{\QQ}{|\QQ|}.
\end{align}
If $\phi$ has $(r,\epsilon)$-structure, then $ \F(\QQ)=\left(\ep+|\QQ|\right)^{\frac{r-2}{2}}\QQ$.

The functions $\A$ and $\F$ are closely related as depicted in following
Lemma from~\cite{Diening:2005,Diening_2007} and
\cite[Appendix~B]{DieningFornasierTomasiWank:2020}. 
\begin{lemma}
  \label{ZSH}
  Let $\A$ satisfy Assumption~\ref{assumption:nonlinear_operator} and
  let $\F$ be defined by \eqref{def_F}. Then for all $\PP,\,\QQ\in
  \R^{d\times d}$, we have
  \begin{align*}
    \big(\A(\PP)-\A(\QQ)\big) : (\PP-\QQ) 
    &\eqsim\left(\phi_{|\PP|}\right)^*(|\A(\PP)-\A(\QQ)|)
    \\
    &\eqsim |\F(\PP)-\F(\QQ)|^2
    \\
    &\eqsim\phi_{|\PP|}(|\PP-\QQ|)
    \\
    \intertext{and}
    |\A(\PP)-\A(\QQ)|&\eqsim \phi'_{|\PP|}(|\PP-\QQ|),
  \end{align*}
  where the hidden constants only depend on $r^\pm$.
\end{lemma}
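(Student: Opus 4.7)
The plan is to derive all four equivalences from the pointwise Hessian bounds in Assumption~\ref{assumption:nonlinear_operator} via a fundamental-theorem-of-calculus argument, reducing everything to an integral involving $\phi''$ along the segment between $\PP$ and $\QQ$, and then matching that integral with the shifted N-function $\phi_{|\PP|}$.

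First I would write, for $\QQ\neq\PP$,
\begin{equation*}
  \A(\PP)-\A(\QQ)=\int_0^1 D\A\bigl(\QQ+t(\PP-\QQ)\bigr)\,(\PP-\QQ)\,\d t,
\end{equation*}
so that pairing with $\PP-\QQ$ and invoking~\eqref{assumption:nonlinear_operator:1}--\eqref{assumption:nonlinear_operator:2} yields
\begin{equation*}
  \bigl(\A(\PP)-\A(\QQ)\bigr):(\PP-\QQ)\eqsim |\PP-\QQ|^2\int_0^1\phi''\bigl(\abs{\QQ+t(\PP-\QQ)}\bigr)\,\d t,
\end{equation*}
and the analogous upper bound for $|\A(\PP)-\A(\QQ)|$ in terms of the same integral times $|\PP-\QQ|$. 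Thus every quantity in the claim will be matched against this single integral.

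Next, the main obstacle is the \textbf{key auxiliary estimate}
\begin{equation*}
  |\PP-\QQ|^2\int_0^1\phi''\bigl(\abs{\QQ+t(\PP-\QQ)}\bigr)\,\d t\eqsim \phi_{|\QQ|}\bigl(\abs{\PP-\QQ}\bigr)\eqsim \phi_{|\PP|}\bigl(\abs{\PP-\QQ}\bigr),
\end{equation*}
with constants depending only on $r^\pm$. I would split into the two regimes $|\PP-\QQ|\le |\QQ|/2$ (where $|\QQ+t(\PP-\QQ)|\eqsim|\QQ|$ on $[0,1]$, so the integral collapses to $\phi''(|\QQ|)$, which matches $\phi_{|\QQ|}''(0)$ and hence $\phi_{|\QQ|}(|\PP-\QQ|)/|\PP-\QQ|^2$ by Taylor expansion using uniform convexity) and $|\PP-\QQ|\ge |\QQ|/2$ (where a subinterval of $[0,1]$ forces $|\QQ+t(\PP-\QQ)|\eqsim|\PP-\QQ|$, and \eqref{eq:phi-st} together with the defining formula \eqref{def:shifted_phi_function} gives $\phi''(|\PP-\QQ|)|\PP-\QQ|^2\eqsim\phi(|\PP-\QQ|)\eqsim\phi_{|\QQ|}(|\PP-\QQ|)$). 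The second equivalence (shift swap $|\QQ|\leftrightarrow|\PP|$) then follows from the symmetry property~\eqref{eq:shift-symmetry}. This simultaneously yields $\bigl(\A(\PP)-\A(\QQ)\bigr):(\PP-\QQ)\eqsim\phi_{|\PP|}(|\PP-\QQ|)$ and, combining with the integral bound for $|\A(\PP)-\A(\QQ)|$ and \eqref{eq:psi*psi'=psi} applied to the shifted N-function $\phi_{|\PP|}$, the last stated equivalence $|\A(\PP)-\A(\QQ)|\eqsim\phi'_{|\PP|}(|\PP-\QQ|)$.

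For the $\F$-equivalence, I would argue directly from the definition~\eqref{def_F}. Computing $D\F$ shows
\begin{equation*}
  |D\F(\RR)|^2\eqsim \phi''(|\RR|)
\end{equation*}
(the scalar factor $\sqrt{\phi'(|\RR|)|\RR|}$ differentiates to something of size $\sqrt{\phi''(|\RR|)}$ using \eqref{eq:psi*psi'=psi}), so the same segment argument gives
\begin{equation*}
  |\F(\PP)-\F(\QQ)|^2\eqsim |\PP-\QQ|^2\int_0^1\phi''\bigl(\abs{\QQ+t(\PP-\QQ)}\bigr)\,\d t,
\end{equation*}
which by the key estimate is $\eqsim\phi_{|\PP|}(|\PP-\QQ|)$. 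Finally, the equivalence with $(\phi_{|\PP|})^*(|\A(\PP)-\A(\QQ)|)$ is obtained by writing $|\A(\PP)-\A(\QQ)|\eqsim\phi'_{|\PP|}(|\PP-\QQ|)$ and applying the Legendre relation $(\phi_{|\PP|})^*(\phi_{|\PP|}'(t))\eqsim\phi_{|\PP|}(t)$ from~\eqref{eq:psi*psi'=psi}, closing the loop of equivalences.
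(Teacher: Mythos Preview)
The paper does not give its own proof of this lemma but cites \cite{Diening:2005,Diening_2007} and \cite[Appendix~B]{DieningFornasierTomasiWank:2020}; your outline is exactly the argument carried out in those references (line-integral representation via the Hessian bounds, reduction to the key pointwise equivalence $\int_0^1\phi''(|\QQ+t(\PP-\QQ)|)\,\d t\eqsim\phi''(|\PP|+|\QQ|)$, and identification with the shifted N-function), so there is nothing to compare.

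One small technical slip: from $|D\F(\RR)|\eqsim\sqrt{\phi''(|\RR|)}$ the segment argument yields $|\F(\PP)-\F(\QQ)|\eqsim|\PP-\QQ|\int_0^1\sqrt{\phi''(|\RR_t|)}\,\d t$, not the squared integral you wrote; after squaring you therefore need the key estimate for $\sqrt{\phi''}$ as well (or simply pass through the common anchor $\phi''(|\PP|+|\QQ|)$, which gives both at once). Also, the lower bound for $\F$ is not automatic from a norm bound on $D\F$; it uses that $D\F(\RR)$ is symmetric positive definite with all eigenvalues $\eqsim\sqrt{\phi''(|\RR|)}$, so that pairing $\F(\PP)-\F(\QQ)$ against $\PP-\QQ$ works just as for $\A$. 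Both points are routine and handled in the cited references, but they deserve a sentence each if you write this out in full.
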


According to Lemma \ref{ZSH}, for $v,u\in W^{1,\phi}(\D)$, we have the equivalence
\begin{align*}
  \int_\Omega(\A(\DB v)-\A(\DB u)):(\DB v-\DB u)
  \eqsim\norm{\F(\DB v)-\F(\DB u)}^2_2,
\end{align*}
where the constants only depend on~$r^\pm$.  We call the square root
of the right-hand side the natural distance or $\F$-distance.

\begin{remark}[Quasi-norm and natural distance]
  The natural distance is an important concept in the regularity
  theory as well as the numerical analysis of nonlinear PDEs. It was
  introduced under the name \emph{quasi-norm} for
  $(r,\epsilon)$-structure by Barrett and Liu in the breakthrough
  paper~\cite{Barrett:1994}. In particular, they used the quantity 
  \begin{align*}
    |w|_{(r, v)}^{2}\coloneqq\int_{\D}\big(\epsilon+|\nabla v|+|\nabla w|
    \big)^{r-2}|\nabla w|^{2}
    \,, \qquad  v, w\in W^{1,r}(\D).
  \end{align*}
  Indeed, in this case, we have for all \(\PP,\QQ\in\R^{d\times
    d}\) that
  \begin{align*}
    \phi_{|\PP|}(|\PP-\QQ|)
    &\eqsim (\ep+|\PP|+|\QQ|)^{r-2}|\PP-\QQ|^2
  \end{align*}
  which implies the equivalence to the natural distance, i.e.
  \begin{align}
    \int_\Omega(\A(\nabla v)-\A(\nabla u)):(\nabla v-\nabla u)
    \eqsim\norm{\F(\nabla v)-\F(\nabla u)}^2_2
    \eqsim \abs{ v- u}_{(r, v)}^2\label{connection-quasi-norm:eq1}.
  \end{align}
  For the quasi-norm error, Barrett and Liu proved quasi-optimality for 
  conforming finite element Galerkin approximation of the $p$-Laplace
  problem. This was extended in \cite{Ruzicka:2006} to more general
  problems  
  with uniformly convex N-functions.  
\end{remark}

We conclude this section stating equivalences of different integral
averages  in the natural distance.
\begin{lemma}[{\cite[Lemma A.2]{DieKapSchw:2012}}]\label{lem:anymean}
Let $\omega\subset\R^d$ be a bounded open set and let $\QQ\in L^\phi(\omega)^{d\times d}$. 
Further we define $\mean{\QQ}_\omega^{\A} \coloneqq \A^{-1}(\mean{\A(\QQ)}_\omega)$. 
Then we have
  \begin{align*}
    \dashint_\omega \abs{\F(\QQ) - \mean{\F(\QQ)}_\omega}^2 \eqsim \dashint_\omega
    \abs{\F(\QQ) - \F(\mean{\QQ}^{\A}_\omega)}^2 \eqsim \dashint_\omega
    \abs{\F(\QQ) - \F(\mean{\QQ}_\omega)}^2
  \end{align*}
  and the hidden constants only depend on 
  \(r^\pm\)
  .
\end{lemma}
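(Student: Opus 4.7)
The plan is to express the three quantities
\[
\mu_1 := \dashint_\omega |\F(\QQ) - \mean{\F(\QQ)}_\omega|^2, \quad
\mu_A := \dashint_\omega |\F(\QQ) - \F(\mean{\QQ}^{\A}_\omega)|^2, \quad
\mu_* := \dashint_\omega |\F(\QQ) - \F(\mean{\QQ}_\omega)|^2
\]
in a common form so their comparison reduces to bounding two ``correction terms'' by $\mu_1$. The direction $\mu_1 \le \mu_A$ and $\mu_1 \le \mu_*$ is immediate, since $\mean{\F(\QQ)}_\omega$ is the $L^2$-best constant approximation of $\F(\QQ)$.

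For the reverse direction, I would introduce an auxiliary tensor $\cc_0\in\R^{d\times d}$ with $\F(\cc_0) = \mean{\F(\QQ)}_\omega$. Such a $\cc_0$ exists uniquely because $\F$ is radial and the scalar map $t\mapsto \sqrt{\phi'(t)\,t}$ is strictly increasing (a consequence of the uniform convexity of $\phi$), so $\F\colon \R^{d\times d}\to\R^{d\times d}$ is a bijection. Exploiting the orthogonality $\dashint_\omega (\F(\QQ) - \mean{\F(\QQ)}_\omega)=0$, a direct expansion of $|\F(\QQ)-\F(\cdot)|^2$ yields
\[
\mu_A = \mu_1 + |\F(\cc_0) - \F(\mean{\QQ}^{\A}_\omega)|^2, \qquad \mu_* = \mu_1 + |\F(\cc_0) - \F(\mean{\QQ}_\omega)|^2,
\]
so the task reduces to bounding both correction terms by a constant times $\mu_1$.

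These bounds follow from a dual pair of Jensen-type arguments based on Lemma~\ref{ZSH}. For the $\mu_*$-correction, $\mean{\QQ}_\omega=\dashint_\omega\QQ$ gives $\cc_0-\mean{\QQ}_\omega = \dashint_\omega (\cc_0 - \QQ)$; combining the equivalence $|\F(\cc_0)-\F(\mean{\QQ}_\omega)|^2\eqsim \phi_{|\cc_0|}(|\cc_0-\mean{\QQ}_\omega|)$ from Lemma~\ref{ZSH} with Jensen applied to the convex map $\xx\mapsto \phi_{|\cc_0|}(|\xx|)$ bounds this by $\dashint_\omega \phi_{|\cc_0|}(|\QQ-\cc_0|) \eqsim \mu_1$. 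For the $\mu_A$-correction, the defining identity $\A(\mean{\QQ}^{\A}_\omega)=\mean{\A(\QQ)}_\omega$ gives $\A(\mean{\QQ}^{\A}_\omega)-\A(\cc_0) = \dashint_\omega (\A(\QQ)-\A(\cc_0))$; combining the dual equivalence $|\F(\cc_0)-\F(\mean{\QQ}^{\A}_\omega)|^2\eqsim (\phi_{|\cc_0|})^*(|\A(\cc_0)-\A(\mean{\QQ}^{\A}_\omega)|)$ from Lemma~\ref{ZSH} with Jensen applied to the convex conjugate $(\phi_{|\cc_0|})^*$ bounds this by $\dashint_\omega (\phi_{|\cc_0|})^*(|\A(\QQ)-\A(\cc_0)|) \eqsim \mu_1$.

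The key subtlety is the choice of $\cc_0$: by working with the single shift $|\cc_0|$ on both sides of each estimate I avoid invoking the shift-change inequalities \eqref{eq:change_of_shift}, whose leftover term $|\F(\cc_0)-\F(\cdot)|^2$ would otherwise reappear on the right-hand side and make the bound circular. The bijectivity of $\F$ is the only ingredient not stated explicitly in the excerpt; once $\cc_0$ is in hand, the two Jensen estimates in dual form close the argument with implicit constants depending only on the uniform convexity indices $r^\pm$ of $\phi$.
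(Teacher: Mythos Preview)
The paper does not supply its own proof of this lemma; it is only quoted from \cite[Lemma~A.2]{DieKapSchw:2012}. Your argument is correct and is in fact essentially the standard one: the lower bounds $\mu_1\le\mu_A$, $\mu_1\le\mu_*$ are trivial by the best-constant property of the mean, and for the upper bounds you pick $\cc_0=\F^{-1}(\mean{\F(\QQ)}_\omega)$, use the Pythagorean expansion, and bound the two correction terms by Jensen's inequality applied to $\phi_{|\cc_0|}$ and $(\phi_{|\cc_0|})^*$ respectively, together with the equivalences of Lemma~\ref{ZSH}. Your remark about avoiding the shift-change estimates~\eqref{eq:change_of_shift} is well taken: keeping the fixed shift $|\cc_0|$ throughout is exactly what makes the argument close without a circular term. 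The bijectivity of $\F$ that you flag as not explicit in the excerpt is in fact used elsewhere in the paper (see the end of the proof of Lemma~\ref{cr:symgrad:thm3}, where surjectivity of $\F$ is invoked), and follows from the strict monotonicity of $t\mapsto\phi'(t)t$ for uniformly convex~$\phi$.
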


\subsection{Variational formulation of the nonlinear Stokes equations}

Let $\phi$ be a uniformly convex N-function with indices~$r^\pm$ of
uniform convexity and
assume that  $\A$ has $\phi$-struture. We define the velocity
and pressure spaces by
\begin{align}\label{def_V_Q}
  \WW^{1,\phi}_0(\D)\qquad\text{and}\qquad L^{\phi^*}_0(\D).
\end{align}
The variational formulation of \eqref{intro:eq1} reads as: for $\ff\in \WW^{-1,\phi^*}(\D)$ 
find $\uu\in\WW^{1,\phi}_0(\D)$ and $p\in L_0^{\phi^*}(\Omega)$ with
\begin{subequations}\label{wp:eq1}
\begin{alignat}{2}
&\forall \vv\in
\WW^{1,\phi}_0(\D)&\qquad\int_{\D} \A(\DB\uu):\DB\vv 
-  \int_{\D}
p \divergenz\vv 
&= \langle\ff,\vv\rangle
,\label{wp:eq1a}\\
&\forall q\in
L^{\phi^*}_0(\D)&\qquad\int_{\D} q \divergenz \uu 
& =0  .\label{wp:eq1b}
\end{alignat}
\end{subequations}
Equation \eqref{wp:eq1b} shows that the velocity is actually in the
space of divergence-free vector fields
\begin{align}\label{def_Z}
\Z\coloneqq\{\vv\in \WW^{1,\phi}_0(\D) \mid \divergenz\vv=0\}\subset \WW^{1,\phi}_0(\D) \,.
\end{align}
Consequently, 
\(\uu\)
solves the reduced problem 
\begin{align}\label{wp:eq2}
  \forall \vv\in\Z\qquad\int_{\D} \A(\DB\uu):\DB\vv 
  = \langle\ff,\vv\rangle.
\end{align}
According to the strict  monotonicity of \(\A\) (cf. Lemma~\ref{ZSH}),
existence and uniqueness of~$\uu \in \Z$ follows by standard monotone operator
theory and Korn's inequality in Proposition~\ref{P:Korn1}.
The inf-sup condition \cite[Lemma~4.3]{Belenki:2012} for Orlicz spaces reads
  \begin{gather}
    \label{eq:inf-sup-norm}
    \inf_{q\in L_0^{\phi^*}(\Omega)}\sup_{\vv\in \WW^{1,\phi}_0(\D) }
    \frac{\int_\D q\divergenz
      \vv}{\norm{\nabla\vv}_{\phi}\norm{q}_{\phi^*}} =:\beta>0,
  \end{gather}
where $\beta>0$ only depends on~$r^\pm$ and~$\Omega$ via its John
constant. This inf-sup condition guarantees the existence of a unique
pressure~$p \in
L^{\phi^*}_0(\D)$ such that $(\uu,p) \in \WW^{1,\phi}_0(\D)
\times L^{\phi^*}_0(\D)$ solves~\eqref{wp:eq1}. 

\subsection{Mesh and finite element spaces}
\label{sec:fe-framework}

At this point we introduce our notation for the finite element
spaces of the following sections. 
Let $\T$ be a shape regular, face-to-face decomposition of $\D$ consisting of $d$-simplices $K\in\T$ 
such that  $\overline{\D}=\bigcup_{K\in\T}K$. 
The faces  of the elements in $\T$ are denoted by $\FC$. The subset of faces in the interior of
\(\Omega \) is denoted by \(\FCi\) and its complement of boundary
faces by \(\FCb\). The skeleton of \(\T\) is \(\Sigma\coloneqq\cup_{F\in\FC}F\).

For $K\in\T$ and \(F\in\FC\) we denote by $h_K$
respectively \(h_F\) its diameter and we introduce, with some ambiguity of
notation, the mesh-size function
\(h:\Omega\to\R^+\) by
\begin{align*}
  h_{|K}\coloneqq h_K, \quad \text{in}~K\in\T,\qquad \text{and}\qquad h_{|F}\coloneqq h_F,\quad\text{on}~F\in\FC
\end{align*}
and let \(h_{\max}=\|h\|_{L^{\infty}(\Omega)}\).
Denoting by $\rho_K$, \(K\in\T\), the supremum of the diameters of all balls
inscribed in the respective \(K\), the shape constant of \(\T\) is given by
\begin{equation}\label{mesh_1}
  \max_{K\in \T}\frac{h_{K}}{\rho_{K}}<\infty.
\end{equation} 
We emphasise that, in the following, constants hidden in \(\lesssim\) or
\(\eqsim\) may depend on the shape constant of \(\T\) but not on the mesh-size function.

Each face $F\in\FCi$ in the interior of $\D$ is oriented by  
a unit normal vector $\nn_F$. We indicate by $\Jump{ v}_F$ and \(\Avg{v}_F\) the jump respectively average of \(v\) on the face $F\in\FCi$ 
in direction of $\nn_F$. In particular, for
$K_1,K_2\in\T$ with $F=K_1\cap K_2$ we define for \(\xx\in F\)
\[
\Jump{ v}_{F}(\xx)=v_{|K_1}(\xx)-v_{|K_2}(\xx)\quad\text{and}\quad \Avg{v}_{F}(\xx)=\frac{v_{|K_1}(\xx)+v_{|K_2}(\xx)}{2},
\]
where $\nn_F$ points outside $K_1$. 
Of course, the sign of $\Jump{v}_F$ depends on the orientation of $\nn_F$, 
which however is not relevant to our presentation. For boundary faces $F\in\FCb$, 
$\nn_F$ is oriented outside $\D$ and 
$\Jump{v}_F$ equals the trace on $F$, i.e. for $K\in\T$ with $F=K\cap\partial\D$ 
we define $\Jump{v}_F(\xx)=v|_K(\xx)$ for $\xx\in
F$. We let \(\nn\in L^\infty(\Sigma)\) with \(\nn_{|F}=\nn_F\), \(F\in\FC\).

For a differential operator $\mathcal D$, the notation $\mathcal D_h$ is used for the broken version of $\mathcal D$, 
i.e.
\begin{align}\label{eq:DBh}
\left({\mathcal D}_h v\right)_{|K}\coloneqq{\mathcal D}\left(v_{|K}\right)\qquad\text{in}~K,~K\in\T
\end{align}
for piecewise smooth $v$. E.g., the broken gradient of a piecewise $W^{1,r}$-function $v$ is given by 
$\left(\nabla_h v\right)_{|K}=\nabla\left(v_{|K}\right)$ for all $K\in\T$. 
For an element $K\in \T$ we define the set of neighbours $\Neigh_{K}$ and the neighbourhood
$\omega_{K}$ by
\begin{subequations}\label{element_neighbourhood}
  \begin{equation}
    \begin{aligned}
      \Neigh_{K}\coloneqq\{K'\in \T \mid K'\cap K\neq\emptyset\},\qquad
      \omega_{K}\coloneqq\text{interior} \left(\bigcup_{K'\in
          \Neigh_{K}}K'\right)\,.
    \end{aligned}
  \end{equation}
  The sets $\omega_K$ are open, bounded and connected.  The
  shape constant~\eqref{mesh_1} of $\T$ implies the following
  properties of $\T$:
  \begin{align}
    |\omega_{K}|\eqsim |K|\eqsim h_K^d\quad \text{for all}~K\in \T \text{ and } \# \Neigh_{K}\leq m_{0} \text{ for some }
    m_{0}\in\N.
  \end{align}
\end{subequations}
For $\ell\geq 0$ let $\PS_\ell(K)$ and $\PS_\ell(F)$ be the set of polynomials 
of degree at most $\ell$ on a $d$-simplex $K\in\T$ and a face $F\in\FC$, respectively. 
For \(k\in \mathbb{N}_0\), the space of $W^{k,1}$-conforming element-wise polynomials of order
\(\ell\in\N_0\) on $\T$ is defined by 
\begin{align}\label{space_broken_polynomials:eq1}
S^k_\ell\coloneqq S^k_\ell(\T)\coloneqq \left\{v\in W^{k,1}(\D)\mid \forall K\in\T \ v_{|K}\in\PS_\ell(K)\right\}.
\end{align}

The lowest-order Crouzeix-Raviart finite element is given as the subspace of
functions in \(S_1^0\), for which the face mean values of jumps
vanish, i.e. 
\begin{align}\label{CR_space:eq1}
  \CR\coloneqq\CR(\T)\coloneqq\left\{v\in S^0_1 \mid
    \forall F\in\FC 
    \int_F \Jump{v} =0
  \right\}.
\end{align}
Note that \(\int_F \Jump{v}_F=\int_F v\) for boundary
edges \(F\in\FCb\).
In other words, $\CR(\T)$ consists of all functions in $S^0_1(\T)$
that are continuous at the barycenters  
of interior faces and vanish at the barycenters of boundary faces.

It is well known that broken Sobolev norms are definite on \(\CR\) and
they even dominate jumps across edges. As we could find this statement
only for Hilbert norms, we provide a short proof in the required
\(W^{1,1}\) setting. 
\begin{lemma}\label{L:jump<nabla}
  Let $v\in\CR(\T)+ W^{1,1}_0(\Omega)$, then
  for all $F\in\FC$ we have
  \begin{align*}
    \dashint_F h_F^{-1}\left|\Jump{v}_F \right|
    \lesssim
    \sum_{\substack{K\in\T\\ F\subset K}}
    \dashint_{K}\left|\nabla v\right|
    \,.
  \end{align*}
  The hidden constant depends only on the shape constant of \(\T\).
\end{lemma}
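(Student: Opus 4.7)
The plan is to exploit the zero-mean-value property of the jumps $\Jump{v}_F$ on each face, which follows for $v\in\CR(\T)+W^{1,1}_0(\Omega)$ from the defining condition of $\CR(\T)$ together with the vanishing boundary trace of $W^{1,1}_0$-functions. Once this is in place, the bound reduces to a combination of the standard trace and Poincar\'{e} inequalities on the elements adjacent to $F$.

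First I would verify that
\begin{equation*}
  \int_F\Jump{v}_F=0\qquad\text{for every }F\in\FC.
\end{equation*}
Writing $v=v_{\CR}+v_0$ with $v_{\CR}\in\CR(\T)$ and $v_0\in W^{1,1}_0(\Omega)$: on interior faces $\Jump{v_0}_F=0$ since the two one-sided traces of $v_0\in W^{1,1}(\Omega)$ agree, and $\int_F\Jump{v_{\CR}}_F=0$ by the definition of $\CR(\T)$; on boundary faces $v_0$ has zero trace and $\int_F v_{\CR}=\int_F\Jump{v_{\CR}}_F=0$ again by $\CR(\T)$.

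Next, for an interior face $F=K_1\cap K_2$ I would set $\bar v_i\coloneqq\dashint_{K_i}v$ and split
\begin{equation*}
  \Jump{v}_F=(v_{|K_1}-\bar v_1)-(v_{|K_2}-\bar v_2)+(\bar v_1-\bar v_2).
\end{equation*}
For each $i$ the scaled trace inequality combined with Poincar\'{e}'s inequality on the shape-regular simplex $K_i$ yields
\begin{equation*}
  \dashint_F\left|v_{|K_i}-\bar v_i\right|\lesssim \dashint_{K_i}\left|v-\bar v_i\right|+h_{K_i}\dashint_{K_i}|\nabla v|\lesssim h_{K_i}\dashint_{K_i}|\nabla v|.
\end{equation*}
The constant term $|\bar v_1-\bar v_2|$ is controlled via the mean-zero property: integrating the decomposition above over $F$ and using $\int_F\Jump{v}_F=0$ gives
\begin{equation*}
  |F|\,|\bar v_1-\bar v_2|\le \int_F\left|v_{|K_1}-\bar v_1\right|+\int_F\left|v_{|K_2}-\bar v_2\right|,
\end{equation*}
so $|\bar v_1-\bar v_2|\lesssim h_F\sum_{K\supset F}\dashint_K|\nabla v|$ by the preceding bound and $h_{K_i}\eqsim h_F$. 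Summing the three contributions and dividing by $h_F$ delivers the claim. The boundary case $F\subset K\cap\partial\Omega$ is analogous with a single element: decompose $v_{|K}=(v_{|K}-\bar v)+\bar v$, bound the oscillating part by trace-Poincar\'{e}, and use $\int_Fv_{|K}=0$ to conclude $|\bar v|\lesssim h_K\dashint_K|\nabla v|$.

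The only non-routine point is the zero-mean property of $\Jump{v}_F$ for the sum space $\CR(\T)+W^{1,1}_0(\Omega)$; once this is secured, the rest is a standard trace-Poincar\'{e} computation, and the shape regularity of $\T$ ensures that the hidden constants depend only on the shape constant.
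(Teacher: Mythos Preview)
Your proof is correct and follows essentially the same route as the paper: both hinge on the zero face-mean of the jump for functions in $\CR(\T)+W^{1,1}_0(\Omega)$ and then reduce to a scaled trace inequality combined with Poincar\'e on each adjacent element. The only cosmetic difference is that the paper subtracts the well-defined face mean $\mean{v}_F$ from each one-sided trace, whereas you subtract the element means $\bar v_i$ and control their difference separately via the zero-mean property; the two organizations are equivalent.
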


\begin{proof}
  Since $v\in\CR(\T)+ W^{1,1}_0(\Omega)$, the face mean value
  $\mean{v}=\dashint_F v$ takes the same value on both
  sides of the face is therefore well defined. If $F \subset \partial
  \Omega$, then $\mean{v}_F=0$. 

  We first consider interior faces \(F\in\FCi\), i.e. there exist
  $K_1,K_2\in\T$ with $F=K_1\cap K_2$.  Then the embedding
  $W^{1,1}(K_i) \hookrightarrow L^1(F)$, \(i=1,2\), and a Poincar{\'e} inequality
  imply 
  \begin{align}
    \begin{aligned}
      \dashint_F \left|\Jump{v}_F \right|
      &\leq \sum_{i=1}^2 \dashint_F \abs{v_{|K_i} -\mean{v}_F}
      \leq 2 \sum_{i=1}^2 \dashint_F \abs{v_{|K_i} -\mean{v}_{K_i}}
      \lesssim \sum_{i=1}^2 \dashint_{K_i} \abs{\nabla v}
      .
    \end{aligned}
    \label{cr:symgrad:lem1:proof:eq1}
  \end{align}
  The case $F \subset \partial \Omega$ is similar using $\mean{v}_F=0$.
\end{proof}

The \emph{Crouzeix-Raviart} interpolation operator
\(\ICR:W^{1,1}(\D)\to\CR\) is defined by
\begin{align*}
\int_F\ICR v=\int_F v\quad \forall F\in\FC^i.
\end{align*}
Element-wise integration by parts yields for all $K\in\T$ and $v\in W^{1,1}(\D)$ 
\begin{align}\label{interpolation_operator_cr:eq1}
\nablah(\ICR v)_{|K}=\dashint_K \nablah(\ICR v)
  =\dashint_K \nabla_h v
\end{align}
i.e., $\ICR$ preserves averages of first derivatives.

Using Jensen's inequality and the fact
that \(\DB_h(\ICR \vv)\) is piecewise constant, we conclude
from~\eqref{interpolation_operator_cr:eq1} that for any
N-function~$\psi$ we have
\begin{align}\label{eq:ICRstab}
	\int_K \psi(|\DB_h(\ICR \vv)|) 
	=  \int_K \psi\Big(\Big|\dashint_K \DB
	\vv(\yy)\,\d\yy\Big|\Big)
	\le \int_K \psi(|\DB\vv|).
\end{align}
Thanks to the definition of the Luxembourg norm, this readily implies
\(\|\DB_h \ICR \vv\|_{\psi}\le \|\DB\vv\|_{\psi}\), i.e.  
\(\ICR\) is \(\WW^{1,\psi}\)-stable
with constant one. 
Moreover, we even have that \(\ICR\vv\) is a
quasi-optimal approximation of~$\vv$ in the
natural distance.
\begin{corollary}\label{C:QOICR}
	For any \(\vv\in \WW_0^{1,\phi}(\D)\) and $K \in \mathcal{T}$, we have
	\begin{align*}
		\|\F(\DB \vv)-\F(\DB_h\ICR\vv)\|_{2;K}\lesssim
    \inf_{\QQ_K\in\R^{d\times d}}\|\F(\DB \vv)-\QQ_K\|_{2;K}.
	\end{align*}
	The hidden constant depends only on \(r^{\pm}\).
\end{corollary}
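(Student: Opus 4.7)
The plan is to reduce the claim to a best-constant approximation argument in $L^2(K)$, using the equivalence of mean values stated in Lemma~\ref{lem:anymean}.

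First, I would use the fact that on each element $K$, the interpolant $\ICR\vv$ is affine and~\eqref{interpolation_operator_cr:eq1} gives $\DB_h(\ICR\vv)_{|K} = \mean{\DB\vv}_K$ (the mean of $\DB\vv$ over $K$). In particular, $\F(\DB_h\ICR\vv)_{|K} = \F(\mean{\DB\vv}_K)$ is a constant matrix on $K$, so
\[
\|\F(\DB\vv)-\F(\DB_h\ICR\vv)\|_{2;K}^2 = \int_K \bigl|\F(\DB\vv) - \F(\mean{\DB\vv}_K)\bigr|^2.
\]

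Next, I would apply Lemma~\ref{lem:anymean} (on $\omega=K$), which converts the nonlinear mean $\F(\mean{\DB\vv}_K)$ into the linear mean $\mean{\F(\DB\vv)}_K$ at the cost of a constant depending only on $r^\pm$:
\[
\int_K \bigl|\F(\DB\vv) - \F(\mean{\DB\vv}_K)\bigr|^2 \lesssim \int_K \bigl|\F(\DB\vv) - \mean{\F(\DB\vv)}_K\bigr|^2.
\]
This is the key step where the nonlinear structure of the natural distance is handled; the lemma absorbs all the difficulty arising from the $\phi$-structure.

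Finally, since $\mean{\F(\DB\vv)}_K$ is by definition the $L^2(K)$-orthogonal projection of $\F(\DB\vv)$ onto constant matrices, it is the best constant $L^2(K)$-approximation of $\F(\DB\vv)$. Therefore, for any $\QQ_K \in \R^{d\times d}$,
\[
\int_K \bigl|\F(\DB\vv) - \mean{\F(\DB\vv)}_K\bigr|^2 \leq \int_K \bigl|\F(\DB\vv) - \QQ_K\bigr|^2,
\]
and taking the infimum over $\QQ_K$ yields the claim. I do not expect a real obstacle here: the entire argument is a three-line chain, with the only nontrivial ingredient being Lemma~\ref{lem:anymean}, which has already been established. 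The hidden constant depends only on $r^\pm$, as needed.
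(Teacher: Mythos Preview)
Your proposal is correct and follows essentially the same approach as the paper: observe that $\DB_h(\ICR\vv)_{|K}=\mean{\DB\vv}_K$ via~\eqref{interpolation_operator_cr:eq1}, then invoke Lemma~\ref{lem:anymean}. You have simply unpacked the final step (that $\mean{\F(\DB\vv)}_K$ realizes the infimum over constants), which the paper leaves implicit.
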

\begin{proof}
	Note that \(\DB_h(\ICR \vv)_{|K}=\dashint_K
	\DB \vv\) due to \eqref{interpolation_operator_cr:eq1} and use
        Lemma~\ref{lem:anymean}.
\end{proof}

With some ambiguity of notation, we extend all the terms introduced
above to vector-valued functions by applying them component by
component and possibly adding them up, e.g. in the case of norms.

Following \cite{CrouzeixRaviart:1973}, we discretize \eqref{wp:eq1} with nonconforming Crouzeix-Raviart functions for the velocity and 
piecewise constant functions for the pressure
\begin{align}\label{def_CR_S0}
  \CRd\coloneqq(\CR)^d\quad\text{and}\quad 
  {\wh S}_0^0\coloneqq S_0^0(\T)\cap L^{\phi^*}_0(\D).
\end{align}
This pair is inf-sup stable. Indeed, the interpolant $\ICR$ is a bounded Fortin operator, according to \eqref{interpolation_operator_cr:eq1} and \eqref{eq:ICRstab}. This observation and \eqref{eq:inf-sup-norm} imply
    \begin{gather}
  \label{CR:inf_sup_cond:norm}
  \inf_{q_h\in {\wh S}_0^0}\sup_{\vv_h\in\CRd} 
  \frac{\int_\Omega q_h\divergenz_h \vv_h}{\norm{\nablah\vv_h}_{\phi}\norm{q_h}_{\phi^*}} \ge\beta>0.
\end{gather}

\subsection{Smoothing operator}
\label{sec:smoother}

Note that the Crouzeix-Raviart space is nonconforming, i.e.
$\CRd\not\subset \WW^{1,\phi}_0(\D)$ in view of the lack of global continuity 
and the violation of the boundary conditions.
Consequently the datum $\ff\in
\WW^{-1,\phi^*}(\D)$ in \eqref{wp:eq1} cannot be directly applied on
test-functions from \(\CRd\). Still, it is known from
\cite{VeeserZanotti:2018} that an error estimate like
\eqref{intro:eq5}, not involving extra regularity of $\uu$, can hold
true only for discretizations defined for all possible data
$\ff$. Therefore, we resort to a so-called \textit{smoothing operator}
$\EE: 
\CRd \to \WW_0^{1,\infty}(\D)\subset \WW_0^{1,\phi}(\D)$, which acts on the test-functions, thus
enabling the application of $\ff$.  

We make use of the smoothing operator introduced in \cite[\S4.1]{VerfuerthZanotti:2019} for the Stokes equations (see also \cite[\S3]{KreuzerVerfuerthZanotti:2021}). The operator acts on $\vv_{h} \in \CRd$ as follows
\begin{equation*}
\label{E:smoothing-operator}
\EE \vv_h = \A \vv_h + \BB \vv_h + \CC \vv_h
\end{equation*}
where  
\begin{subequations}\label{E:Cons+Entire}
	\begin{itemize}
		\item $\A: \CRd \to (S_1^1)^{d}$ is an averaging operator, enabling the stability estimates \eqref{Eq:E_CR} below;
		\item $\BB$ maps $\vv_h$ to a combination of face bubbles, so as to enforce the identity 
		\begin{align}
			\int_F \EE\vv_h = \int_F \vv_h\qquad\forall
			F\in\FC\label{E:Edge-Moments}
		\end{align}
		which implies
		\begin{align}\label{E:nabladG=nablaE}
			\int_\Omega \QQ_h:\nablah\vv_h
			= \int_\Omega \QQ_h:\nabla \EE\vv_h \qquad \forall \QQ_h\in (S_0^0)^{d\times d},
		\end{align}
		in combination with element-wise integration by parts;
		\item $\CC$ maps $\vv_h$ to a combination of element bubbles, so as to enforce the identity
		\begin{align}\label{E:Divh=DivE}
			\divergenz \EE\vv_h=\divh \vv_h.
		\end{align}
	\end{itemize}
\end{subequations}
We refer to \cite{VerfuerthZanotti:2019,KreuzerVerfuerthZanotti:2021} for the details of the construction and the proof of \eqref{E:Cons+Entire}.

\begin{remark}[Pressure robustness]
  \label{R:pressure-robustness}
  Owing to \eqref{E:Divh=DivE}, we readily infer
  \begin{equation*}
    \divh \vv_h = 0 \quad \implies \quad \divergenz \EE\vv_h = 0
  \end{equation*}
  i.e. $\EE$ maps element-wise divergence-free functions to exactly divergence-free functions. This property, originally pointed out in \cite{Linke:2014}, guarantees the pressure robustness of the discretizations proposed in the next sections, meaning that the discrete velocity is independent of the discrete pressure. This property is necessary to make sure that the velocity-error is independent of the pressure error, thus preventing from sub-optimal error decay rates as in \eqref{intro:eq4}. 
\end{remark}

\begin{lemma}[Stability of $\EE$]\label{L:E_CR}
The operator  $\EE: \CRd\to \WW_0^{1,\infty}(\D)$
from~\cite{VerfuerthZanotti:2019,KreuzerVerfuerthZanotti:2021} 
satisfies~\eqref{E:Cons+Entire} and the local stability
properties 
\begin{subequations}\label{Eq:E_CR}
  \begin{alignat}{2}
    \norm{\nabla \EE\vv_h}_{1;K}&\lesssim \norm{\nablah \vv_h}_{1;\omega_K}&\quad\forall
    K&\in\T,\label{Eq:E_CRa}
    \\
    \|\DB_h (\ww_h+\EE\vv_h)\|_{\infty;K}&\lesssim \dashint_{K}
    |\DB_h (\ww_h+\EE\vv_h)|
    &\quad\forall K&\in\T,
    \label{Eq:E_CRb}\\
    \norm{\DB_h(\vv_h-\EE\vv_h)}_{1;K}
    &\lesssim \sum_{\substack{F\in\FC\\ F\cap K\neq\emptyset}}
    \int_F \left|\Jump{ \vv_h}_F \right|
    &\quad\forall K&\in\T,\label{Eq:E_CRc}
\end{alignat}
for \(\ww_h,\vv_h\in \CRd\) and \(\DB\in \{\nabla,\DD\}\). The hidden constants depend only on the shape constant of $\T$.
\end{subequations}
\end{lemma}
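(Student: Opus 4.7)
The plan is to verify each of the three bounds in \eqref{Eq:E_CR} separately by exploiting the additive decomposition $\EE = \A + \BB + \CC$ together with the defining properties \eqref{E:Cons+Entire}. Since all three operators act locally via a shape-regular reference construction, every estimate ultimately reduces, via standard scaling, to finite-dimensional bounds on reference patches.

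For \eqref{Eq:E_CRa}, I would combine known stability of the averaging component $\A$ with scaling estimates for the bubble corrections. The operator $\A:\CRd \to (S_1^1)^d$ assigns to each interior Lagrange node a convex combination of values of $\vv_h$ from adjacent simplices (and zero on $\partial\D$); a Scott--Zhang/Oswald-type calculation gives $\|\nabla\A\vv_h\|_{1;K}\lesssim \|\nabla_h\vv_h\|_{1;\omega_K}$. The operators $\BB$ and $\CC$ are linear combinations of face- and element-bubbles whose coefficients are determined, respectively, by the face moments $\int_F(\vv_h - \A\vv_h)$ (needed to restore~\eqref{E:Edge-Moments}) and the element moments required by~\eqref{E:Divh=DivE}. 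Scaling each bubble to its reference configuration and bounding the relevant moments by local jumps of $\vv_h - \A\vv_h$, which in turn are bounded via Lemma~\ref{L:jump<nabla} by $\|\nabla_h\vv_h\|_{1;\omega_K}$, yields the claim after summing the three contributions.

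For \eqref{Eq:E_CRb}, the point is that $\ww_h + \EE\vv_h$ restricted to a single $K\in\T$ is a polynomial of degree bounded by a constant depending only on $d$: $\ww_h$ is piecewise affine, $\A\vv_h$ is piecewise affine, and $\BB\vv_h$, $\CC\vv_h$ are, elementwise, fixed polynomial multiples of scalar bubbles. Hence $\DB_h(\ww_h + \EE\vv_h)|_K$ lives in a fixed finite-dimensional polynomial space on $K$ (both for $\DB=\nabla$ and $\DB = \DD$), and the standard norm equivalence between $\|\cdot\|_{\infty;K}$ and $|K|^{-1}\|\cdot\|_{1;K}$ on that space, together with the scaling $|K|\eqsim h_K^d$ from~\eqref{element_neighbourhood} and shape regularity, gives \eqref{Eq:E_CRb}.

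For \eqref{Eq:E_CRc}, the idea is to write $\vv_h - \EE\vv_h = (\vv_h - \A\vv_h) - \BB\vv_h - \CC\vv_h$ and estimate each term by local jumps. The averaging operator $\A$ differs from the identity at each vertex by (a convex combination of) jumps across incident faces; expanding $\vv_h - \A\vv_h$ on $K$ in the local nodal basis therefore yields a bound by $\sum_{F\cap K\neq\emptyset}\int_F|\Jump{\vv_h}_F|$ after scaling. The bubble corrections inherit the same bound because their coefficients are expressed through moments of $\vv_h - \A\vv_h$ on faces or elements inside $\omega_K$, and these moments are again controlled by the face jumps of $\vv_h$.

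I expect the main obstacle to be careful bookkeeping of moments and supports, namely ensuring that each intermediate estimate localizes to $\omega_K$ or to faces meeting $K$ with a constant depending only on the shape constant, and verifying that the polynomial degree of $\EE\vv_h$ on each $K$ is uniformly bounded so that the inverse inequality used for \eqref{Eq:E_CRb} applies. These verifications are carried out in \cite{VerfuerthZanotti:2019,KreuzerVerfuerthZanotti:2021}; the present statement records and, for the symmetric-gradient case $\DB=\DD$ in \eqref{Eq:E_CRb}, slightly extends their construction.
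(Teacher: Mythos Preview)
Your proposal is correct and essentially reconstructs the arguments that the paper largely delegates to the cited references. The one genuine organizational difference is in how \eqref{Eq:E_CRa} is obtained: you bound the three components $\A$, $\BB$, $\CC$ of $\EE$ separately, whereas the paper first establishes \eqref{Eq:E_CRc} (by citation and scaling) and then derives \eqref{Eq:E_CRa} from it via the triangle inequality
\[
\|\nabla\EE\vv_h\|_{1;K}\le \|\nabla(\EE\vv_h-\vv_h)\|_{1;K}+\|\nabla\vv_h\|_{1;K}
\]
together with Lemma~\ref{L:jump<nabla}, which converts the jump sum on the right of \eqref{Eq:E_CRc} back into $\|\nabla_h\vv_h\|_{1;\omega_K}$. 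The paper's route is more economical because it avoids analyzing the three components twice; your direct approach is equally valid and perhaps more self-contained. For \eqref{Eq:E_CRb} the two arguments coincide (finite-dimensional norm equivalence plus scaling). For \eqref{Eq:E_CRc} the paper additionally remarks that the case $\DB=\DD$ follows immediately from the case $\DB=\nabla$ via the pointwise bound $|\DD(\cdot)|\le|\nabla(\cdot)|$, which is simpler than redoing the component-wise analysis.
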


\begin{proof}Properties~\eqref{E:Cons+Entire} are proved
  in~\cite[\S4.1]{VerfuerthZanotti:2019}.
  
  The bound~\eqref{Eq:E_CRc} is proved in~\cite[proposition
  18]{KreuzerVerfuerthZanotti:2021} for \(L^2\)-type norms and
  \(\DB=\nabla\). Recalling that \(|\DD (\vv_h-\EE\vv_h)|\le |\nabla
  (\vv_h-\EE\vv_h)|\),
  the claimed
  bound follows from scaling. 

  Assertion~\eqref{Eq:E_CRa} follows from a triangle inequality
  \begin{align*}
    \norm{\nabla \EE\vv_h}_{1;K}\le \norm{\nabla (\EE\vv_h-\vv_h)}_{1;K}+\norm{\nabla \vv_h}_{1;K}
  \end{align*}
  with~\eqref{Eq:E_CRc} and Lemma~\ref{L:jump<nabla}.

  Since by the above indicated construction, \(\EE\) maps to an affine
  equivalent finite element, therefore property~\eqref{Eq:E_CRb}
  follows from equivalence of norms on finite dimensional spaces and
  standard scaling of affine equivalent elements.
\end{proof}
The local $\WW^{1,1}$-stability of $\EE$~\eqref{Eq:E_CRa} implies local $\WW^{1,\phi}$-stability in the following sense. 
  
  \begin{lemma}[Local $W^{1,\psi}$-stability of $\EE$]\label{cr:lem1}
    Let $\psi$ be a uniformly convex N-function. Then $\EE: \CRd\to \WW_0^{1,\infty}(\D)$ from~\cite{VerfuerthZanotti:2019,KreuzerVerfuerthZanotti:2021} satisfies
    \begin{align}\label{cr:lem1:eq1}
      \dashint_K\psi\left(|\nabla \EE\vv_h|\right)
      \lesssim \dashint_{\omega_K} \psi\left(|\nablah \vv_h|\right)
      \qquad \forall K \in \T.
    \end{align}
    The hidden constant only depends on the indices of uniform convexity
    of \(\psi\) and on the shape constant of \(\T\).
  \end{lemma}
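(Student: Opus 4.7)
The plan is to bootstrap from the $\WW^{1,1}$-stability \eqref{Eq:E_CRa} to the $\WW^{1,\psi}$-stability by going through the $\WW^{1,\infty}$-bound \eqref{Eq:E_CRb}, using Jensen's inequality together with the $\Delta_2$-type growth of uniformly convex N-functions from \eqref{eq:phi-st}.

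First I would apply \eqref{Eq:E_CRb} with $\ww_h=\zero$ to obtain
\[
\|\nabla \EE\vv_h\|_{\infty;K}\lesssim \dashint_K |\nabla \EE\vv_h|,
\]
and then combine this with \eqref{Eq:E_CRa} together with the shape regularity estimate $|K|\eqsim |\omega_K|$ from~\eqref{element_neighbourhood} to deduce the pointwise bound
\[
\|\nabla \EE\vv_h\|_{\infty;K}\lesssim \dashint_{\omega_K} |\nabla_h \vv_h|,
\]
where the hidden constant $C$ depends only on the shape constant of~$\T$.

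Since $\psi$ is nondecreasing, applying it to both sides yields $\psi(\|\nabla \EE\vv_h\|_{\infty;K})\le \psi(C\,\dashint_{\omega_K}|\nabla_h\vv_h|)$. Using \eqref{eq:phi-st}, the constant $C\ge 1$ can be pulled out at the cost of $\max\{C^{r^-},C^{r^+}\}$, so that
\[
\psi\bigl(\|\nabla \EE\vv_h\|_{\infty;K}\bigr)\lesssim \psi\Bigl(\dashint_{\omega_K} |\nabla_h \vv_h|\Bigr).
\]
Finally, Jensen's inequality (valid since $\psi$ is convex) gives
\[
\psi\Bigl(\dashint_{\omega_K} |\nabla_h \vv_h|\Bigr)\le \dashint_{\omega_K}\psi(|\nabla_h \vv_h|),
\]
and bounding $\dashint_K\psi(|\nabla \EE\vv_h|)\le \psi(\|\nabla \EE\vv_h\|_{\infty;K})$ concludes the proof, with a constant depending only on $r^\pm$ and the shape constant of~$\T$.

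There is no real obstacle here: all three ingredients are already in place, and the only subtlety is keeping track of the fact that absorbing the shape-regularity constants into the argument of $\psi$ requires the $\Delta_2$-condition, which is exactly what uniform convexity of $\psi$ provides via~\eqref{eq:phi-st}. No further properties of the smoother~$\EE$ are needed beyond those already established in Lemma~\ref{L:E_CR}.
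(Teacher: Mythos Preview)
Your proof is correct and follows essentially the same approach as the paper: both combine the inverse estimate~\eqref{Eq:E_CRb}, the $\WW^{1,1}$-stability~\eqref{Eq:E_CRa}, the $\Delta_2$-condition (equivalently~\eqref{eq:phi-st}) to absorb constants, and Jensen's inequality. The paper writes this as a single chain of inequalities rather than first isolating the pointwise bound, but the logic is identical.
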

  
  \begin{proof}
	Using~\eqref{Eq:E_CRb}, the local $\WW^{1,1}$-stability~\eqref{Eq:E_CRa}  of $\EE$ 
	together with \eqref{element_neighbourhood} and the \(\Delta_2\)-condition, we estimate for all $K\in\T$ and $\vv_h\in\CRd$
	\begin{align}\label{eq:JensenE}
		\dashint_K\psi\left(|\nabla \EE\vv_h|\right)
		\lesssim \dashint_K\psi\left(\dashint_K|\nabla \EE\vv_h|\right)
		\lesssim \dashint_K\psi\left(\dashint_{\omega_K}|\nablah \vv_h|
		\right) \lesssim \dashint_{\omega_K}\psi\left(|\nablah \vv_h|\right).
	\end{align}
        Here we have used Jensen's inequality for convex functions in
        the last step. 
\end{proof}

Lemma \ref{cr:lem1} implies in particular, that $\EE$ is 
$\WW^{1,\phi}$-stable.  This and the fact that only finite many of
the \(\omega_K\), \(K\in\T\), overlap imply for all \(\vv_h\in\CRd\)
that 
\begin{align}\label{cr:lem1:eq2}
\int_\Omega\phi(|\nabla\EE\vv_h|)\lesssim \int_\Omega\phi(|\nablah\vv_h|)	\quad\text{resp.}\quad\norm{\nabla\EE\vv_h}_{\phi}\lesssim\norm{\nablah\vv_h}_{\phi}.
\end{align}

Finally, we observe that \(\EE\) is a right inverse of the
Crouzeix-Raviart interpolation operator as a consequence of \eqref{E:Edge-Moments}, cf. \cite[Lemma~3.2]{VeeserZanotti:2019}.
\begin{corollary}\label{C:rightInverse}
  For all $K\in\T$ and $\vv_h\in \CRd(\T)$ we have
  \begin{align*}
    \ICR\EE \vv_h=\vv_h\qquad\text{and}\qquad \dashint_{K} \nabla
    \EE \vv_h
    =\dashint_{K} \nabla \vv_h
    =\nablah\vv_{h|K}\quad\forall K\in\T.
  \end{align*}
\end{corollary}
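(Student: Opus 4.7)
The plan is to derive both identities directly from property~\eqref{E:Edge-Moments} of the smoothing operator together with the defining property of the Crouzeix-Raviart interpolant and the mean value preservation~\eqref{interpolation_operator_cr:eq1}.

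For the first identity $\ICR\EE\vv_h=\vv_h$, I would argue by uniqueness of Crouzeix-Raviart functions via their face moments. Since $\EE\vv_h\in\WW_0^{1,\infty}(\D)\subset\WW_0^{1,1}(\D)$, the interpolant $\ICR\EE\vv_h$ is well defined and characterized by
\[
\int_F \ICR\EE\vv_h=\int_F\EE\vv_h\qquad\forall F\in\FC.
\]
By property~\eqref{E:Edge-Moments} the right-hand side equals $\int_F \vv_h$ (for interior faces; for boundary faces both sides vanish because $\EE\vv_h\in\WW_0^{1,\infty}$ and $\vv_h$ has vanishing barycentric boundary trace in the sense of the CR definition). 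Thus $\ICR\EE\vv_h$ and $\vv_h$ are both elements of $\CRd$ whose face mean values coincide, and since a Crouzeix-Raviart function is uniquely determined by its face mean values, they must agree.

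For the second identity I would apply~\eqref{interpolation_operator_cr:eq1} to $\EE\vv_h$: since $\EE\vv_h\in\WW^{1,1}(\D)$, we obtain
\[
\nablah(\ICR\EE\vv_h)_{|K}=\dashint_K\nabla \EE\vv_h\qquad\forall K\in\T.
\]
Using the first identity $\ICR\EE\vv_h=\vv_h$ just established, the left-hand side is simply $\nablah\vv_{h|K}$, which is constant on $K$ and therefore equals $\dashint_K\nabla_h\vv_h$ trivially. Chaining these equalities yields the claim.

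The argument is essentially a bookkeeping of definitions; there is no real obstacle, as the only nontrivial input is the edge-moment preservation~\eqref{E:Edge-Moments}, which has been assumed as part of the construction of $\EE$. The one point worth being careful about is the boundary faces, where one must verify that $\int_F\EE\vv_h=0=\int_F\vv_h$ for $F\in\FCb$; this follows from $\EE\vv_h\in\WW_0^{1,\infty}(\D)$ and the CR convention $\int_F\Jump{\vv_h}_F=\int_F\vv_h=0$.
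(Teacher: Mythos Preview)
Your proposal is correct and follows exactly the route the paper indicates: the paper does not spell out a proof but simply states that the first identity is a consequence of~\eqref{E:Edge-Moments} (citing \cite[Lemma~3.2]{VeeserZanotti:2019}), and the second identity then follows from~\eqref{interpolation_operator_cr:eq1}. Your only redundancy is the separate treatment of boundary faces, since~\eqref{E:Edge-Moments} is stated for all $F\in\FC$ and, in any case, Crouzeix--Raviart functions are already uniquely determined by their interior face means.
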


\section{A pressure robust Crouzeix-Raviart method for $\DB=\nabla$.}
\label{sec:cr:modified_cr_discretization}
In this section we propose a pressure robust Crouzeix-Raviart method for the
nonlinear Stokes equation~\eqref{intro:eq1} for the case $\DB=\nabla$.
The method coincides with the ones
in~\cite{VerfuerthZanotti:2019,KreuzerVerfuerthZanotti:2021} for
$\phi(t)=t^2$,  i.e. for the linear Stokes equations.

\begin{method}\label{M:DD=nabla}
  For $\ff\in\WW^{-1,\phi^*}(\D)$ compute $\uu_h\in \CRd$ and
  $p_h\in {\wh S}_0^0$ such that
    \begin{alignat*}{2}
      \forall \vv_h&\in\CRd&\qquad\int_{\D} \A(\nablah
      \uu_h):\nablah\vv_h
      - \int_{\D} p_h \divergenz_h\vv_h
      &= \langle \ff, \EE \vv_h\rangle
      ,
      \\
      \forall q_h&\in{\wh S}_0^0&\quad\int_{\D} q_h \divergenz_h
      \uu_h
      & =0 .
    \end{alignat*}
\end{method}
Note that we do not require regularity beyond $\ff\in
\WW^{-1,\phi^*}(\D)$ from \eqref{wp:eq1}, because \(\EE\vv_h\in \WW_0^{1,\infty}(\D)\).
Similarly to the derivation of \eqref{wp:eq2}, 
we have that \(\uu_h\) is in the space of element-wise divergence-free functions
\begin{align}\label{def_Zh}
\Z_h\coloneqq\{\vv_h\in \CRd \mid \divergenz_h\vv_h=0\}\subset\CRd\,,
\end{align}
and uniquely solves the reduced problem 
\begin{align}\label{cr:eq3}
\forall \vv_h\in\Z_h\quad\int_{\D} \A(\nablah\uu_h):\nablah\vv_h 
  = \langle\ff,\EE\vv_h\rangle.
\end{align}

In view of the stability \eqref{cr:lem1:eq2} 
and the inf-sup-condition \eqref{CR:inf_sup_cond:norm}, the
well-posedness of Method~\ref{M:DD=nabla} follows from standard monotone
operator theory; cf. \cite{Belenki:2012,Hirn:2010}.

\begin{proposition}[Well-posedness]\label{P:M-DB=nabla}
  Method~\ref{M:DD=nabla} determines a unique pair $(\uu_h,p_h)\in\CRd\times
  {\wh S}_0^0$ depending continuously on the datum \(\ff\in \WW^{-1,\phi^*}(\Omega)\).
\end{proposition}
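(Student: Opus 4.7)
The plan is to follow the standard strategy for nonlinear saddle-point problems: reduce to a monotone equation on the discrete kernel $\Z_h$, solve it via monotone operator theory, and finally recover the pressure through the discrete inf-sup condition~\eqref{CR:inf_sup_cond:norm}. Throughout, the finite dimensionality of $\CRd$ and ${\wh S}_0^0$ keeps the functional analytic machinery minimal.

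First I would equip $\CRd$ (and hence $\Z_h$) with the broken norm $\|\nablah\cdot\|_\phi$; this is a norm because Crouzeix--Raviart functions with vanishing broken gradient are piecewise constant with zero boundary-face mean values and hence vanish. By~\eqref{cr:lem1:eq2} the smoothing operator $\EE$ is $\WW^{1,\phi}$-stable, so the linear functional $\vv_h \mapsto \langle \ff, \EE \vv_h\rangle$ is bounded on $\CRd$. Next I would consider the nonlinear operator $T:\Z_h\to \Z_h^*$ defined by $\langle T\uu_h,\vv_h\rangle := \int_\Omega \A(\nablah\uu_h):\nablah\vv_h$ and verify its three key properties: continuity (from Assumption~\ref{assumption:nonlinear_operator}); strict monotonicity, because Lemma~\ref{ZSH} gives $\langle T\uu_h-T\vv_h,\uu_h-\vv_h\rangle\eqsim\|\F(\nablah\uu_h)-\F(\nablah\vv_h)\|_2^2$, which vanishes only when $\nablah\uu_h=\nablah\vv_h$, i.e.\ $\uu_h=\vv_h$; and coercivity, because Lemma~\ref{ZSH} applied with $\vv_h=\bzero$ together with $\A(\bzero)=\bzero$ and~\eqref{eq:phi-st} yields $\langle T\uu_h,\uu_h\rangle \eqsim \int_\Omega \phi(|\nablah\uu_h|)$, which grows superlinearly with respect to $\|\nablah\uu_h\|_\phi$ thanks to the N-function property of $\phi$. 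A classical monotone operator argument (Browder--Minty, trivially applicable in finite dimension) then produces a unique $\uu_h\in\Z_h$ solving~\eqref{cr:eq3}.

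Once $\uu_h$ has been found, the residual functional $\vv_h \mapsto \int_\Omega \A(\nablah\uu_h):\nablah\vv_h - \langle\ff,\EE\vv_h\rangle$ vanishes on $\Z_h$ by construction, and therefore lies in the polar of $\Z_h$ in $(\CRd)^*$. The inf-sup condition~\eqref{CR:inf_sup_cond:norm} identifies this polar precisely with the image of ${\wh S}_0^0$ under the bilinear form $(q_h,\vv_h)\mapsto\int_\Omega q_h\divergenz_h\vv_h$ and, simultaneously, guarantees injectivity of the associated map $q_h\mapsto(\vv_h\mapsto\int_\Omega q_h\divergenz_h\vv_h)$. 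Hence there is a unique $p_h\in{\wh S}_0^0$ making the first equation of Method~\ref{M:DD=nabla} valid for every $\vv_h\in\CRd$. Continuous dependence on $\ff$ then follows by a standard finite-dimensional argument: a convergent sequence $\ff_n\to\ff$ in $\WW^{-1,\phi^*}(\Omega)$ produces solutions $(\uu_h^{(n)},p_h^{(n)})$ that are uniformly bounded by coercivity; by compactness, any subsequential limit $(\uu_h^\ast,p_h^\ast)$ solves Method~\ref{M:DD=nabla} with datum $\ff$ by continuity of $T$ and of the pressure reconstruction, and uniqueness forces the entire sequence to converge.

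I do not anticipate a genuine obstacle. The only step deserving care is the identification of the polar of $\Z_h$ with ${\wh S}_0^0$, but this is precisely encoded in the discrete inf-sup condition~\eqref{CR:inf_sup_cond:norm}, which itself follows from $\ICR$ being a Fortin operator as recalled below~\eqref{def_CR_S0}. All remaining ingredients -- strict monotonicity, coercivity, and stability of $\EE$ -- are provided by Lemma~\ref{ZSH} and Lemma~\ref{cr:lem1} of Section~\ref{sec:prelim}, reducing the argument to textbook monotone-operator theory on a finite-dimensional space equipped with the $\phi$-adapted broken norm.
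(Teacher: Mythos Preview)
Your proposal is correct and follows precisely the route the paper indicates: it simply refers to the stability~\eqref{cr:lem1:eq2}, the inf-sup condition~\eqref{CR:inf_sup_cond:norm}, and ``standard monotone operator theory'' with a citation to \cite{Belenki:2012,Hirn:2010}, without spelling out any details. Your write-up is a faithful unpacking of that one-line justification.
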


From the properties of $\EE$ and the $\varphi$-structure of $\A$ 
we infer our first main result addressing the approximation
properties of Method~\ref{M:DD=nabla}.

\begin{theorem}[Velocity-error]\label{cr:thm1}
Let $(\uu,p)\in\WW_0^{1,\phi}(\D)\times L_0^{\phi^*}(\D)$ be the solution to \eqref{wp:eq1}  
and let $(\uu_h,p_h)\in\CRd\times {\wh S}_0^0$ be computed by Method~\ref{M:DD=nabla}. 
Then we have 
\begin{align}\label{cr:thm1:eq1}
\norm{\F(\nabla\uu)-\F(\nablah\uu_h)}_{2}^2
\lesssim \sum_{K\in\T} \inf_{\QQ_K\in\R^{d\times d}}\norm{\F(\nabla\uu)-\QQ_K}_{2;\omega_K}^2\,.
\end{align}
In particular, when $\F(\nabla \uu)\in\WW^{1,2}(\D)$, we have
\begin{align}\label{cr:thm1:eq2}
\norm{\F(\nabla\uu)-\F(\nablah\uu_h)}_2\lesssim \norm{h\nabla\F(\nabla\uu)}_2\,.
\end{align}
The hidden constants depend only on $\Omega$, 
  \(r^\pm\)
  ,  and the shape constant of $\T$. 
\end{theorem}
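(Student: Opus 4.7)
The plan is to prove a quasi-best approximation estimate with respect to the Crouzeix--Raviart interpolant $\ICR\uu$, and then use Corollary~\ref{C:QOICR} to conclude. First, I would note that $\ICR\uu\in\Z_h$: indeed, since $\nabla_h(\ICR\uu)|_K=\langle\nabla\uu\rangle_K$ by~\eqref{interpolation_operator_cr:eq1}, taking the trace yields $\divergenz_h(\ICR\uu)=\langle\divergenz\uu\rangle_K=0$ on every $K$. Consequently $\ww_h:=\ICR\uu-\uu_h\in\Z_h$ is an admissible test function in~\eqref{cr:eq3}. By Lemma~\ref{ZSH},
\begin{equation*}
\|\F(\nabla_h\ICR\uu)-\F(\nabla_h\uu_h)\|_2^2\eqsim\int_\Omega\big(\A(\nabla_h\ICR\uu)-\A(\nabla_h\uu_h)\big):\nabla_h\ww_h,
\end{equation*}
and inserting the discrete equation together with the continuous equation tested against $\EE\ww_h$ (which lies in $\Z$ by the pressure-robustness identity~\eqref{E:Divh=DivE}, so no pressure appears) gives
\begin{equation*}
\int_\Omega\A(\nabla_h\uu_h):\nabla_h\ww_h=\langle\ff,\EE\ww_h\rangle=\int_\Omega\A(\nabla\uu):\nabla\EE\ww_h.
\end{equation*}

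Next I would exploit the crucial consistency identity~\eqref{E:nabladG=nablaE}: choosing the piecewise constant $\QQ_h|_K:=\A(\nabla_h\ICR\uu|_K)$, the cell-wise constancy of $\nabla_h\ICR\uu$ gives $\A(\nabla_h\ICR\uu)=\QQ_h$ on $\Omega$, so that
\begin{equation*}
\int_\Omega\A(\nabla_h\ICR\uu):\nabla_h\ww_h=\int_\Omega\QQ_h:\nabla_h\ww_h=\int_\Omega\QQ_h:\nabla\EE\ww_h.
\end{equation*}
Subtracting the two previous displays yields the master identity
\begin{equation*}
\|\F(\nabla_h\ICR\uu)-\F(\nabla_h\uu_h)\|_2^2\eqsim\int_\Omega\big(\A(\nabla_h\ICR\uu)-\A(\nabla\uu)\big):\nabla\EE\ww_h.
\end{equation*}

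To this right-hand side I would apply the shifted Young inequality~\eqref{eq:deltaYoung} with the shift $|\nabla_h\ICR\uu|$, obtaining a sum of $\delta\int_\Omega\phi_{|\nabla_h\ICR\uu|}(|\nabla\EE\ww_h|)$ and $c(\delta)\int_\Omega(\phi_{|\nabla_h\ICR\uu|})^*(|\A(\nabla_h\ICR\uu)-\A(\nabla\uu)|)$. The second term is equivalent to $\|\F(\nabla_h\ICR\uu)-\F(\nabla\uu)\|_2^2$ by Lemma~\ref{ZSH} combined with~\eqref{eq:shift-symmetry}, and Corollary~\ref{C:QOICR} controls it by the right-hand side of~\eqref{cr:thm1:eq1}. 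The first term is the main obstacle and must be absorbed: since $\nabla_h\ICR\uu$ is piecewise constant, on each $K$ the shift $c_K:=|\nabla_h\ICR\uu|_K|$ is fixed, so Lemma~\ref{cr:lem1} (applied with $\psi=\phi_{c_K}$) gives $\dashint_K\phi_{c_K}(|\nabla\EE\ww_h|)\lesssim\dashint_{\omega_K}\phi_{c_K}(|\nabla_h\ww_h|)$; the shift-change inequality~\eqref{eq:change_of_shift1} then replaces $c_K$ by the local shifts $|\nabla_h\ICR\uu|_{K'}|$ on each $K'\subset\omega_K$, producing the absorbable term $\phi_{|\nabla_h\ICR\uu|}(|\nabla_h\ww_h|)\eqsim|\F(\nabla_h\ICR\uu)-\F(\nabla_h\uu_h)|^2$ plus shift-distortion terms $|\F(\nabla_h\ICR\uu|_K)-\F(\nabla_h\ICR\uu|_{K'})|^2$; the latter are controlled via Lemma~\ref{lem:anymean} by $\dashint_{\omega_K}|\F(\nabla\uu)-\langle\F(\nabla\uu)\rangle_{\omega_K}|^2$, hence by $\inf_{\QQ_K}\|\F(\nabla\uu)-\QQ_K\|^2_{2;\omega_K}/|\omega_K|$. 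Choosing $\delta$ sufficiently small, finite overlap of the patches $\omega_K$ allows me to absorb the $\|\F(\nabla_h\ICR\uu)-\F(\nabla_h\uu_h)\|_2^2$ contribution to the left-hand side.

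Finally, a quasi-triangle inequality \mbox{$\|\F(\nabla\uu)-\F(\nabla_h\uu_h)\|_2^2\lesssim\|\F(\nabla\uu)-\F(\nabla_h\ICR\uu)\|_2^2+\|\F(\nabla_h\ICR\uu)-\F(\nabla_h\uu_h)\|_2^2$} combined with Corollary~\ref{C:QOICR} for the interpolation term yields~\eqref{cr:thm1:eq1}. The second assertion~\eqref{cr:thm1:eq2} then follows by applying a Poincaré inequality on each patch $\omega_K$ to $\F(\nabla\uu)-\langle\F(\nabla\uu)\rangle_{\omega_K}$ and summing using the finite overlap of the patches. The principal technical difficulty is the last step of the absorption argument, because shifts vary from element to element while the smoother $\EE$ is only locally stable on patches; the shift-change inequalities together with the piecewise-constancy of $\nabla_h\ICR\uu$ are precisely the tools needed to reconcile these.
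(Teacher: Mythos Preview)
Your argument is correct and complete; the shift-distortion control via Lemma~\ref{lem:anymean} needs a couple of lines (triangle inequality through $\langle\F(\nabla\uu)\rangle_{\omega_K}$ together with $|K|\eqsim|K'|\eqsim|\omega_K|$), but this is routine.

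Your route, however, differs from the paper's in two respects. First, the paper does not isolate the discrete-to-interpolant distance $\|\F(\nabla_h\ICR\uu)-\F(\nabla_h\uu_h)\|_2$; instead it starts from the full error $\|\F(\nabla\uu)-\F(\nabla_h\uu_h)\|_2^2$ and splits the monotonicity integral into an approximation part $I_1$ (tested with $\nabla_h\ICR\uu-\nabla\uu$) and a consistency part $I_2$ (tested with $\nabla_h\uu_h-\nabla_h\ICR\uu$). Second, and more substantially, the paper never invokes the local $W^{1,\psi}$-stability of $\EE$ (Lemma~\ref{cr:lem1}) in the proof. After rewriting $I_2=\int_\Omega(\A(\nabla\uu)-\A(\nabla_h\ICR\uu)):(\nabla\EE\ww_h-\nabla_h\ww_h)$, it bounds the \emph{smoothing defect} $\nabla_h(\vv_h-\EE\vv_h)$ (for both $\vv_h=\uu_h$ and $\vv_h=\ICR\uu$) pointwise via the jump estimate~\eqref{Eq:E_CRc} and Lemma~\ref{L:jump<nabla}, obtaining $\|\nabla_h(\vv_h-\EE\vv_h)\|_{\infty;K}\lesssim\dashint_{\omega_K}|\nabla_h(\vv_h-\uu)|$. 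This produces directly the absorbable term $\|\F(\nabla\uu)-\F(\nabla_h\uu_h)\|_2^2$ after a single shift-change to a constant $\QQ_K$ and back to $|\nabla\uu|$, without any cross-element shift comparison. In short: you trade the jump machinery for Lemma~\ref{cr:lem1} plus an inter-element shift-change; the paper trades the inter-element shift-change for the jump-based bound on $\vv_h-\EE\vv_h$. Your setup is more compact (one master identity), while the paper's treatment keeps the shift-change entirely local to a single patch and gives a clearer geometric interpretation of the consistency error as a smoothing defect.
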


Before we prove Theorem~\ref{cr:thm1}, some remarks are in order.

  \begin{remark}[Quasi-optimality]\label{R:qo-discuss}
    Recall that, for conforming and divergence-free discretizations,
    the velocity-error estimate \eqref{intro:eq5} is derived from the
    quasi-optimality stated in
    \eqref{intro:eq3}. Theorem~\ref{cr:thm1} establishes a counterpart
    of \eqref{intro:eq5} for Method~\ref{M:DD=nabla}, but
    quasi-optimality in the sense of \eqref{intro:eq3} is not
    guaranteed. The critical aspect is the stability of the smoothing
    operator $\EE$, as Lemma~\ref{cr:lem1} for 
    \(\psi=\phi_a\),  requires a 
    constant shift \(a\ge0\) on element neighbourhoods. Enforcing this through a  change
    of the shift~\eqref{eq:change_of_shift} introduces a suboptimal
    approximation term. 
    For details consider the bound of $I_2$ in the proof of Theorem~\ref{cr:thm1} below. 
\end{remark}

\begin{remark}[Rate of convergence and regularity]
\label{R:comparisonV}
We emphasize that two aspects of the velocity-error estimate
\eqref{cr:thm1:eq2} improve upon  the existing results in
\cite{Belenki:2012} for the MINI element and in
\cite{Hirn:2010} for a stabilized $\mathds{Q}_1/\mathds{Q}_1$
discretization. First, the convergence rate for the velocity is optimal, whereas for
problems with \((r,\ep)\)-structure, 
suboptimal rates are observed in
\cite{Belenki:2012,Hirn:2010} both theoretically and in numerical simulations
for $r>2$, cf. \eqref{intro:eq4}.
Second, no additional regularity of the pressure is
necessary beyond the minimal requirement $p \in L^{\phi^*}_0(\D)$ from
\eqref{wp:eq1}. 
Both improvements are made possible
by the pressure robustness observed in
Remark~\ref{R:pressure-robustness}. 

An \(\mathcal{O}(h)\) velocity-error estimate can be found
in~\cite[Corollary 4.2]{Kaltenbach:2023b} for a local dG
discretization, however, compared to~\eqref{cr:thm1:eq2} this result
has several disadvantages. First, the discrete solution is obtained
via a pseudo-monotone scheme, which does not guarantee uniqueness of
the solution. Second, the constant hidden in \(\mathcal{O}(h)\) is not
robust with respect to $\varepsilon$ and is unbounded as \(\epsilon\to
0\), and third, the additional regularity \(\ff\in\LL^2(\Omega)\) is required. In
contrast, Theorem~\ref{cr:thm1} invokes only the minimal regularity
\(\F(\nabla \uu) \in W^{1,2}(\Omega)^{d\times d}\) that is necessary
according to the approximation theory.

The regularity for system~\eqref{wp:eq1} has not been fully
developed. However, it is expected that this kind of regularity is the
natural one. 
Results for the special case of $(r,\epsilon)$-structure can be found
in~\cite{VeigaKaplickyRuzicka:2011,Veiga_2013,Veiga:2008}. The case
without pressure but for all uniformly convex N-functions is
considered in~\cite[Theorem~2.4]{BehnDiening2024}.  
\end{remark}

\begin{proof}[Proof of Theorem~\ref{cr:thm1}]
  We abbreviate $\vv_h=\ICR\uu\in\CRd$. We use Lemma~\ref{ZSH} to estimate the error by
  \begin{align*}
\norm{\F(\nabla\uu)-\F(\nabla_h\uu_h)}_2^2 
  &\eqsim \int_{\D}\left(\A(\nabla_h
    \uu_h)-\A(\nabla\uu)\right):(\nabla_h\uu_h-\nabla
  \uu)
  \\
  &=\underbrace{\int_{\D}\left(\A(\nabla_h
    \uu_h)-\A(\nabla\uu)\right):(\nabla_h\vv_h-\nabla
  \uu)}_{=:I_1}
  \\
  &\quad+\underbrace{\int_{\D}\left(\A(\nabla_h
    \uu_h)-\A(\nabla\uu)\right):(\nabla_h\uu_h-\nabla_h
    \vv_h)}_{=:I_2}.
  \end{align*}
We bound the first term with the Young-type inequality
\eqref{eq:deltaYoung} and Lemma~\ref{ZSH} by
\begin{align*}
  I_1&\le \int_{\D} \delta\left(\phi_{|\nabla\uu|}\right)^*\left(\left|
  \A(\nabla_h\uu_h)-\A(\nabla\uu)  \right|\right) + \int_{\D}C_\delta
  \,\phi_{|\nabla \uu|}\left(\left| \nabla_h\vv_h-\nabla\uu
  \right|\right)
  \\
  &\lesssim \delta\,\|\F(\nabla\uu)-\F(\nabla_h\uu_h)\|_{2}^2+C_\delta
    \|\F(\nabla\uu)-\F(\nabla_h\vv_h)\|_{2}^2. 
\end{align*}
For estimating the second term on the right-hand side, observe that the second equation of Method~\ref{M:DD=nabla} and~\eqref{interpolation_operator_cr:eq1} imply
\(\divergenz_h(\uu_h - \vv_h)_{|K}=-\dashint_K \divergenz\uu =0\) for all $K \in \T$. This and \eqref{E:Divh=DivE} reveal $\divergenz \EE(\uu_h-\vv_h)=0$. Hence, the first equation of
Method~\ref{M:DD=nabla} and~\eqref{wp:eq1} imply
\begin{align*}
I_2&=\langle\ff,\EE(\uu_h-\vv_h)\rangle - \int_{\D}\A(\nabla\uu):(\nabla_h\uu_h-\nabla_h
    \vv_h)
  \\
  &=\int_{\D}\A(\nabla\uu):\left(\nabla
    \EE(\uu_h-\vv_h)-\nablah(\uu_h-\vv_h)\right)
  \\
   &=
     \int_{\D}\left(\A(\nabla\uu)-\A(\nablah\vv_h)\right):\left(\nabla
    \EE(\uu_h-\vv_h)-\nablah(\uu_h-\vv_h)\right), 
\end{align*}
where we have used~\eqref{E:nabladG=nablaE} together with \(\A(\nablah\vv_h)\in
(S_0^0)^{d\times d}\) in the last step.

By the Young-type inequality  \eqref{eq:deltaYoung}, the
quasi-triangle inequality \eqref{eq:qtriangle}, and Lemma~\ref{ZSH}, we conclude 
for $\delta>0$ that
\begin{align*}
I_2&\le
2\int_{\D} C_{\delta}\left(\phi_{|\nabla\uu|}\right)^*\left(\left| \A(\nabla\uu)-\A(\nablah\vv_h)  \right|\right)
  \\
  &\quad+ \int_{\D}\delta\,\phi_{|\nabla\uu|}\left(\left|
    \nablah(\uu_h-\EE\uu_h) \right|\right)
    + \int_{\D}\delta\,\phi_{|\nabla\uu|}\left(\left| \nablah (\vv_h-\EE\vv_h) \right|\right)
  \\
&\eqsim C_{\delta}\norm{\F(\nabla\uu)-\F(\nablah\vv_h)}^2_2  
  \\
  &\quad+ \int_{\D}\delta\,\phi_{|\nabla\uu|}\left(\left|
    \nablah(\uu_h-\EE\uu_h) \right|\right)
    + \int_{\D}\delta\,\phi_{|\nabla\uu|}\left(\left| \nablah (\vv_h-\EE\vv_h) \right|\right).
\end{align*}
The first term on the right-hand side is already fine and the latter
terms can be treated in the same way. 
For  $\QQ_K\in\R^{d\times d}$ we first change the shift with \eqref{eq:change_of_shift} from
$|\nabla\uu|$ to $|\QQ_K|$ to obtain
\begin{align*}
 \int_{K}\phi_{|\nabla\uu|}\left(\left| \nablah (\vv_h-\EE\vv_h)
   \right|\right)
   \lesssim \int_{K}\phi_{|\QQ_K|}\left(\left| \nablah
       (\vv_h-\EE\vv_h) \right|\right)
   +\int_{K}\abs{\F(\nabla \uu) -\F(\QQ_K)}^2.
\end{align*}
Next, Lemma~\ref{L:E_CR} and
Lemma~\ref{L:jump<nabla} yield that
\begin{align*}
  \| \nablah (\vv_h-\EE\vv_h) \|_{\infty;K}&\lesssim \dashint_K 
  \left|\nablah (\vv_h-\EE\vv_h) \right|
    =\sum_{\substack{F\in\FC\\ F\cap K\neq\emptyset}}\dashint_F h^{-1}\left|\Jump{ \vv_h-\uu }_F
  \right|
  \\
                                           &\lesssim  \sum_{\substack{F\in\FC\\ F\cap K\neq\emptyset}}\sum_{\substack{K'\in\T\\ F\subset K'}}
  \dashint_{K'}\left|\nablah( \vv_h-\uu)\right|
  \eqsim \dashint_{\omega_K} \left|\nablah( \vv_h-\uu)\right|. 
\end{align*}
The last estimate follows from
\begin{align*}
\omega_K=\bigcup\{K'\in\T: F\subset
K'~\text{for some}~F\in\FC~\text{with}~F\cap K\neq\emptyset\}
\end{align*}
and the fact that every element \(K'\subset \omega_K\) appears in previous
sum at least once and at most \((d+1)\)-times. Consequently, recalling
\(|K|\eqsim|\omega_K|\), we obtain with Jensen's
inequality and a shift-change back to $|\nabla\uu|$, that
\begin{align}\label{eq:NatDist(uh-Euh)}
  \begin{aligned}
  \int_{K}\phi_{|\nabla\uu|}&\left(\left| \nablah (\vv_h-\EE\vv_h) \right|\right)
  \\
    &\lesssim \int_{\omega_K}\phi_{|\QQ_K|}\left(\left| \nablah
        (\vv_h-\uu) \right|\right)
    +\int_{K}\abs{\F(\nabla \uu) -\F(\QQ_K)}^2
    \\
    &\lesssim \int_{\omega_K} \phi_{|\nabla \uu|}\left(\left| \nablah
        (\vv_h-\uu) \right|\right)
    +\int_{\omega_K} \abs{\F(\nabla \uu) -\F(\QQ_K)}^2
    \\
    &\eqsim \int_{\omega_K} \abs{\F(\nablah
        \vv_h)-\F(\nabla \uu)}^2
    +\int_{\omega_K} \abs{\F(\nabla \uu) -\F(\QQ_K)}^2.
  \end{aligned}
\end{align}
Estimating the term \(\int_{K}\phi_{|\nabla\uu|}\left(\left| \nablah
    (\uu_h-\EE\uu_h)\right|\right)\) in exactly the same way, and summing over all
    \(K\in\T\), we arrive at 
    \begin{align*}
      I_2&\lesssim C_\delta \norm{\F(\nabla\uu)-\F(\nablah\vv_h)}^2_2
      \\
      &\quad+ \delta\int_{\D} \abs{\F(\nabla_h
       \vv_h)-\F(\nabla \uu)}^2 +\delta\int_{\D} \abs{\F( \nablah
        \uu_h) - \F(\nabla \uu)}^2
      \\
      &\quad+ \delta\sum_{K\in\T}\inf_{\QQ_K\in\R^{d\times d}}\int_{\omega_K}\phi_{|\nabla\uu|}\left(\left| \nabla \uu - \QQ_K
     \right|\right)
    \end{align*}
    since \(\QQ_K\in\R^{d\times d}\) was arbitrary and the sets
    \(\omega_K\), \(K\in\T\), overlap only finitely often.

Using again Lemma~\ref{ZSH}, combining the previous estimates, we arrive at
\begin{align*}
  \norm{\F(\nabla\uu)-\F(\nablah\uu_h)}_2^2&\lesssim  (C_{\delta}+\delta)\norm{\F(\nabla\uu)-\F(\nablah\vv_h)}^2_2
  \\
  &\quad+\delta \norm{\F(\nabla\uu)-\F(\nablah\uu_h)}^2_2
  \\
  &\quad+\delta\sum_{K\in\T}\inf_{\QQ_K\in\R^{d\times d}}\norm{\F(\nabla\uu) - \F(\QQ_K)}_{2;\omega_K}^2\,.
\end{align*}
Choosing \(\delta\) sufficiently small, the second term can be
compensated on the left hand side and the assertion follows from
Corollary~\ref{C:QOICR}
recalling \(\vv_h=\ICR\uu\).
\end{proof}

\begin{remark}
  Note that we also have~\eqref{cr:thm1:eq1} for the post-processed
  approximation \(\EE\uu_h\) instead of \(\uu_h\). Indeed, thanks to
  Lemma~\ref{ZSH} and the quasi triangle
  inequality~\eqref{eq:qtriangle}, we have 
  \begin{align*}
    \|\F(\nabla \uu)-\F(\nabla\EE \uu_h)\|_2^2 \lesssim
    \|\F(\nabla \uu)-\F(\nablah \uu_h)\|_2^2+ \int_\Omega\phi_{|\nabla
    \uu|}(|\nablah
    \uu_h-\nabla\EE \uu_h|). 
  \end{align*}
  The claim follows estimating the former term on the right hand side by~\eqref{cr:thm1:eq1}
  and the latter term as in the proof of
  Theorem~\ref{cr:thm1}; compare with \eqref{eq:NatDist(uh-Euh)}.
\end{remark}

\section{A pressure robust Crouzeix-Raviart method for $\DB=\DD$}
\label{S:pr-cr-DB=DD}

Recall the symmetric gradient
\(\DD\vv\coloneqq\frac12(\nabla\vv+\nabla\vv^T)\). According
to~\cite{Arnold:1993}, on some choices of \(\Omega\) and 
\(\T\), there exists \(\zero\neq\vv_h\in \CRd\) with
\(\DD_h\vv_h=\zero\), 
where \(\DD_h\) is the element-wise symmetric gradient; compare with~\eqref{eq:DBh}.

This observation means that we cannot just apply Method~\ref{M:DD=nabla} with
\(\nablah\) replaced by \(\DD_h\) since the quasi-linear operator is
not strictly monotone in general. We propose the following alternative, inspired by the so-called recovered finite elements~\cite{GeorgoulisPryer:18}.

\begin{method}\label{M:DB=DD1}
  For $\ff\in \WW^{-1,\phi^*}(\D)$
  find $\uu_h\in \CRd$ and $p_h\in {\wh S}_0^0$ such that
  \begin{alignat*}{2}
      \forall \vv_h&\in\CRd\qquad\int_{\D} \A(\DD\EE \uu_h):\DD\EE\vv_h 
      &- \int_{\D} p_h \divergenz_h\vv_h
      &
      = \langle \ff, \EE \vv_h\rangle 
      \\
      \forall q_h&\in{\wh S}_0^0&\quad
      \int_{\D} q_h \divergenz_h \uu_h
      & =0\,.
  \end{alignat*}
\end{method}
\begin{remark}[Equivalent conforming method]
	\label{R:conforming-method}
	Method~~\ref{M:DB=DD1} is equivalent to a conforming and divergence-free discretization of \eqref{wp:eq1}: find $\widetilde{\uu}_h \in \EE(\CRd)$ and $\widetilde{p}_h \in {\wh S}_0^0$ such that
	\begin{subequations}\label{M:DB=DD1-equiv}
		\begin{alignat*}{2}
			&\forall \widetilde \vv_h\in
			\EE (\CRd)&\qquad\int_{\D} \A(\DB\widetilde \uu_h):\DB\widetilde \vv_h 
			-  \int_{\D}
			\widetilde p_h \divergenz\widetilde \vv_h 
			&= \langle\ff,\widetilde\vv_h\rangle
			,\\
			&\forall \widetilde q\in
			{\wh S}_0^0&\qquad\int_{\D} \widetilde q \divergenz \widetilde\uu_h 
			& =0 .
		\end{alignat*}
	\end{subequations}
	Indeed, we have $\widetilde \uu_h = \EE \uu_h$ and $\widetilde
        p_h = p_h$. The equivalence is obtained
        by~\eqref{E:Divh=DivE}. 
        Though the velocity
	space \(\EE(\CRd)\)  appears to have a complicated structure, it
	can be parametrised by \(\CRd\) via
	\(\EE\), which is one-to-one according to the first part of Corollary~\ref{C:rightInverse}.
\end{remark}

In analogy with \eqref{wp:eq2} and \eqref{cr:eq3}, we can characterize the velocity $\uu_h$ from Method~\ref{M:DB=DD1} as the unique solution in $\Z_h$ of the reduced problem 
\begin{align}
\label{cr:Red-DB=DD1}
\forall \vv_h\in \Z_h\quad\int_{\D} \A(\DD\EE\uu_h):\DD\EE\vv_h 
  = \langle\ff,\EE\vv_h\rangle
\end{align}
which is equivalent to a conforming Galerkin discretization of
\eqref{wp:eq2}, cf. Remark~\ref{R:conforming-method}. Consequently,
Method~\ref{M:DB=DD1} is uniquely solvable by 
standard monotone operator theory and the inf-sup-condition
\eqref{CR:inf_sup_cond:norm}. For an a priori bound of the velocity
$\uu_h$, we additionally recall the stability \eqref{cr:lem1:eq2} of
$\EE$ as well as the inverse bound 
\begin{align*}
	\int_{\D}\phi(|\nablah\vv_h|)=\int_{\D}\phi(|\nablah \ICR\EE\vv_h|)\leq \int_{\D}\phi(|\nabla \EE\vv_h|)\lesssim
	\int_{\D}\phi(|\DD \EE\vv_h|)\quad \forall\vv_h\in\CRd.
\end{align*}
resulting from Corollary~\ref{C:rightInverse}, the stability of the
Crouzeix-Raviart interpolation operator~\eqref{eq:ICRstab} and Korn's
inequality from Proposition~\ref{P:Korn1}.

\begin{proposition}[Well-posedness]\label{P:M-DB=DD1}
  Method~\ref{M:DB=DD1} determines a unique pair $(\uu_h,p_h)\in\CRd\times
  {\wh S}_0^0$, which depends continuously on \(\ff\in\WW^{-1,\phi^*}(\D)\).
\end{proposition}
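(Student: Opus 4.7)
The plan is to first solve the reduced problem~\eqref{cr:Red-DB=DD1} on the finite-dimensional space $\Z_h$ by monotone operator theory, and then to recover the pressure $p_h \in {\wh S}_0^0$ using the discrete inf-sup condition~\eqref{CR:inf_sup_cond:norm}. Thanks to~\eqref{E:Divh=DivE}, $\EE$ maps $\Z_h$ into the divergence-free subspace of $\WW_0^{1,\phi}(\D)$, so~\eqref{cr:Red-DB=DD1} is equivalent to the conforming Galerkin approximation of~\eqref{wp:eq2} on the finite-dimensional subspace $\EE(\Z_h)\subset \Z$ pointed out in Remark~\ref{R:conforming-method}.

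For the reduced problem I would verify the three standard conditions on the nonlinear form $B(\uu_h,\vv_h) \coloneqq \int_\D \A(\DD\EE\uu_h):\DD\EE\vv_h$ and on the load $\vv_h \mapsto \langle \ff, \EE \vv_h\rangle$. \emph{Strict monotonicity} of $B$ on $\CRd$ follows from Lemma~\ref{ZSH} since $B(\uu_h,\uu_h-\vv_h)-B(\vv_h,\uu_h-\vv_h) \eqsim \|\F(\DD\EE\uu_h)-\F(\DD\EE\vv_h)\|_2^2$, which, combined with the injectivity of $\EE$ on $\CRd$ granted by $\ICR \EE = \mathrm{id}$ (Corollary~\ref{C:rightInverse}), vanishes only for $\uu_h = \vv_h$. \emph{Coercivity} on $\Z_h$ is obtained by chaining the inverse bound $\int_\D \phi(|\nablah\vv_h|) \lesssim \int_\D \phi(|\DD\EE\vv_h|)$ stated before the proposition with $B(\vv_h,\vv_h) \eqsim \int_\D \phi(|\DD\EE\vv_h|)$ coming from Assumption~\ref{assumption:nonlinear_operator}, so that the energy dominates $\|\nablah \vv_h\|_\phi$ in a superlinear way. \emph{Continuity} of the load functional on $\CRd$ is immediate from the $\WW^{1,\phi}$-stability~\eqref{cr:lem1:eq2} of $\EE$ together with $\ff \in \WW^{-1,\phi^*}(\D)$. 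A standard Browder--Minty / Brouwer argument on the finite-dimensional space $\Z_h$ then yields a unique $\uu_h \in \Z_h$ solving~\eqref{cr:Red-DB=DD1}, and testing the relation for the difference of two solutions corresponding to data $\ff,\gg$ against $\uu_h^\ff - \uu_h^\gg$ provides continuous dependence of $\uu_h$ on $\ff$ via the same coercivity.

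To recover the pressure, I would observe that, by construction of $\uu_h$, the linear functional $\vv_h \mapsto \langle \ff, \EE\vv_h\rangle - \int_\D \A(\DD\EE\uu_h):\DD\EE\vv_h$ on $\CRd$ vanishes on $\Z_h$. The inf-sup condition~\eqref{CR:inf_sup_cond:norm} then supplies a unique $p_h \in {\wh S}_0^0$ realising this functional as $\vv_h \mapsto \int_\D p_h \divergenz_h \vv_h$, with $\|p_h\|_{\phi^*}$ bounded by $\beta^{-1}$ times the dual norm of that functional, yielding continuous dependence of $p_h$ on $\ff$ as well.

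The step I expect to be the main obstacle is the coercivity on $\Z_h$, which in the Orlicz setting cannot rely on a broken Korn inequality for $\CRd$: as recalled from~\cite{Arnold:1993}, $\DD_h$ may have a nontrivial kernel on $\CRd$ on certain triangulations. The coercivity is rescued precisely by the interplay between $\EE$, which lifts into the conforming space where Korn's inequality (Proposition~\ref{P:Korn1}) applies, and the right-inverse property $\ICR\EE = \mathrm{id}$ (Corollary~\ref{C:rightInverse}) combined with the CR-stability~\eqref{eq:ICRstab}; these together yield the quoted inverse bound controlling $\|\nablah\vv_h\|_\phi$ by $\|\DD\EE\vv_h\|_\phi$ and thereby restore coercivity in the $\phi$-setting.
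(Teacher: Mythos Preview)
Your proposal is correct and follows essentially the same route as the paper: the paper does not give a formal proof but, in the paragraph preceding the proposition, invokes exactly the ingredients you list---equivalence with the conforming reduced problem on $\EE(\Z_h)$, standard monotone operator theory, the stability~\eqref{cr:lem1:eq2} of $\EE$, the inverse bound $\int_\D\phi(|\nablah\vv_h|)\lesssim\int_\D\phi(|\DD\EE\vv_h|)$ via Corollary~\ref{C:rightInverse}, \eqref{eq:ICRstab} and Proposition~\ref{P:Korn1}, and the inf-sup condition~\eqref{CR:inf_sup_cond:norm} for the pressure. Your identification of coercivity as the delicate step and its resolution through the lift $\EE$ matches the paper's emphasis.
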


The equivalence of Method~\ref{M:DB=DD1} with a conforming and divergence-free discretization implies also that we have quasi-optimality in the sense of \eqref{intro:eq3}.

\begin{theorem}[Approximation by $\EE \uu_h$]\label{T:M-DB=DD1}
Let $(\uu,p)\in\WW_0^{1,\phi}(\D)\times L_0^{\phi^*}(\D)$ be the solution of~\eqref{wp:eq1}  
and let $(\uu_h,p_h)\in\CRd\times {\wh S}_0^0$ be computed by Method~\ref{M:DB=DD1}. 
Then 
  \begin{equation*}
      \norm{\F(\DD\uu)-\F(\DD\EE\uu_h)}_{2} \lesssim
      \inf_{\vv_h\in \Z_h}\norm{\F(\DD\uu)-\F(\DD\EE\vv_h)}_{2}.
  \end{equation*}
  The hidden constant depends only on \(r^\pm\).
\end{theorem}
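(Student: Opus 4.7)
The plan is to exploit the equivalence noted in Remark~\ref{R:conforming-method}: with $\widetilde{\uu}_h = \EE \uu_h$, Method~\ref{M:DB=DD1} is a conforming Galerkin discretization of~\eqref{wp:eq2} restricted to the subspace $\EE(\Z_h)\subset\Z$ (here using that $\vv_h\in\Z_h$ implies $\EE\vv_h\in\Z$ by~\eqref{E:Divh=DivE}). Thus the standard \Ruzicka-type C\'ea argument for monotone operators with $\phi$-structure (as in \cite[Lemma~5.2]{Ruzicka:2006}) applies directly; no interpolation, no smoothing stability estimate, and no shift-change are needed. This is much simpler than the proof of Theorem~\ref{cr:thm1}.

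First I would record Galerkin orthogonality. For any $\vv_h \in \Z_h$, the reduced equation~\eqref{cr:Red-DB=DD1} gives
\begin{equation*}
  \int_{\D} \A(\DD\EE \uu_h):\DD\EE\vv_h = \langle\ff,\EE\vv_h\rangle,
\end{equation*}
while testing~\eqref{wp:eq2} against $\EE\vv_h\in\Z$ (using $\divergenz\EE\vv_h=\divh\vv_h=0$ by~\eqref{E:Divh=DivE}) yields
\begin{equation*}
  \int_{\D} \A(\DD \uu):\DD\EE\vv_h = \langle\ff,\EE\vv_h\rangle.
\end{equation*}
Subtracting, for every $\vv_h\in\Z_h$ we obtain
\begin{equation*}
  \int_{\D}\big(\A(\DD\uu)-\A(\DD\EE\uu_h)\big):\DD\EE\vv_h=0.
\end{equation*}

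Then I would proceed in the classical way. Using Lemma~\ref{ZSH} to bound the $\F$-distance from above by the monotonicity form, then inserting an arbitrary $\vv_h\in\Z_h$ and applying Galerkin orthogonality to $\EE(\uu_h-\vv_h)\in\EE(\Z_h)$, we get for any $\vv_h\in\Z_h$
\begin{align*}
  \norm{\F(\DD\uu)-\F(\DD\EE\uu_h)}_2^2
  &\eqsim \int_\D\big(\A(\DD\uu)-\A(\DD\EE\uu_h)\big):(\DD\uu-\DD\EE\uu_h) \\
  &= \int_\D\big(\A(\DD\uu)-\A(\DD\EE\uu_h)\big):(\DD\uu-\DD\EE\vv_h).
\end{align*}
Applying the Young-type inequality~\eqref{eq:deltaYoung} with the N-function $\phi_{|\DD\uu|}$ and its conjugate, and then using the equivalences of Lemma~\ref{ZSH} for both terms, we arrive at
\begin{equation*}
  \norm{\F(\DD\uu)-\F(\DD\EE\uu_h)}_2^2 \lesssim \delta\,\norm{\F(\DD\uu)-\F(\DD\EE\uu_h)}_2^2+ C_\delta\,\norm{\F(\DD\uu)-\F(\DD\EE\vv_h)}_2^2
\end{equation*}
for arbitrary $\delta\in(0,1]$. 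Choosing $\delta$ small enough to absorb the first term on the right into the left and taking the infimum over $\vv_h\in\Z_h$ yields the claim.

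I do not anticipate a real obstacle here: the only subtle point is to verify carefully that Galerkin orthogonality holds on $\EE(\Z_h)$, which relies on the divergence-preserving identity~\eqref{E:Divh=DivE} to ensure $\EE(\Z_h)\subset\Z$, and on the fact that $\uu_h\in\Z_h$ by the second equation of Method~\ref{M:DB=DD1}. Everything else is the textbook monotone-operator C\'ea argument, and all constants depend only on $r^\pm$ through the equivalences of Lemma~\ref{ZSH} and~\eqref{eq:deltaYoung}.
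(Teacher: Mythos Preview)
Your proposal is correct and follows essentially the same route as the paper: Galerkin orthogonality on $\EE(\Z_h)\subset\Z$, then Lemma~\ref{ZSH}, the Young-type inequality~\eqref{eq:deltaYoung}, and absorption of the $\delta$-term. The only difference is cosmetic: you spell out the derivation of Galerkin orthogonality from~\eqref{cr:Red-DB=DD1}, \eqref{wp:eq2} and~\eqref{E:Divh=DivE} explicitly, whereas the paper invokes it in one line.
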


\begin{proof}
  Let \(\vv_h\in\Z_h\) be arbitrary. We employ Lemma~\ref{ZSH}, the scaled
  Young inequality~\eqref{eq:deltaYoung} and the Galerkin orthogonality of
  Method~\ref{M:DB=DD1} to
  obtain that
  \begin{align*}
    \norm{\F(\DD\uu)-\F(\DD\EE\uu_h)}_{2}^2 &\lesssim \int_\D
    \left(\A(\DD\uu)-\A(\DD\EE\uu_h)\right):\left(\DD\uu-\DD
    \EE\vv_h\right)
    \\
    &\lesssim \int_\D
    \delta\left(\phi_{|\DD
      \uu|}\right)^*\left(|\A(\DD\uu)-\A(\DD\EE\uu_h)|\right)
    \\
    &\qquad+\int_\D C_\delta\,\phi_{|\DD \uu|}\left(|\DD\uu-\DD
      \EE\vv_h|\right)
    \\
                                                 &\eqsim \delta \norm{\F(\DD\uu)-\F(\DD\EE\uu_h)}_{2}^2
    \\
    &\qquad+C_\delta \norm{\F(\DD\uu)-\F(\DD\EE\vv_h)}_{2}^2.
  \end{align*}
  The claimed bound follows for \(\delta>0\) small enough.
\end{proof}

If we consider the approximation of $\uu$ by $\uu_h$ instead of $\EE
\uu_h$, we have the following a priori error bounds. Note that, in this
case, quasi-optimality is not guaranteed. 

\begin{theorem}[Approximation by $\uu_h$]\label{T:M-DB=DD2}
	Let   $(\uu,p)\in\WW_0^{1,\phi}(\D)\times L_0^{\phi^*}(\D)$ be the solution of~\eqref{wp:eq1}  
	and let $(\uu_h,p_h)\in\CRd\times {\wh S}_0^0$ be computed by Method~\ref{M:DB=DD1}. 
	Then 
	\begin{align*}
		\norm{\F(\DD\uu)-\F(\DD_h\uu_h)}_{2}&\lesssim
		\inf_{\vv_h\in \Z_h}\big\{\norm{\F(\DD\uu)-\F(\DD\EE\vv_h)}_{2}
		+\norm{\F(\DD\uu)-\F(\DD_h\vv_h)}_{2}\big\}\,.
	\end{align*}
  The hidden constant depends only on 
   \(r^\pm\)
  , $d$ and the shape
constant of \(\T\).
\end{theorem}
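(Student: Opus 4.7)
My plan is to reduce the statement to an application of the quasi-optimality result from Theorem~\ref{T:M-DB=DD1} together with a local best-approximation argument that exploits the fact that $\DD_h\uu_{h|K}$ is exactly the elementwise mean of $\DD\EE\uu_h$. I start from the triangle inequality
\begin{align*}
  \norm{\F(\DD\uu) - \F(\DD_h\uu_h)}_{2}
  \le
  \norm{\F(\DD\uu) - \F(\DD\EE\uu_h)}_{2}
  + \norm{\F(\DD\EE\uu_h) - \F(\DD_h\uu_h)}_{2},
\end{align*}
whose first term is directly controlled via Theorem~\ref{T:M-DB=DD1} by $\inf_{\vv_h\in\Z_h}\norm{\F(\DD\uu)-\F(\DD\EE\vv_h)}_{2}$. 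The task is then to dominate the second term by the two quantities on the right-hand side of the claim.

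For this I would argue elementwise. By Corollary~\ref{C:rightInverse} applied to the symmetric part of $\nabla$, we have $\DD_h\uu_{h|K}=\mean{\DD\EE\uu_h}_K$ for every $K\in\T$, so Lemma~\ref{lem:anymean} with $\QQ=\DD\EE\uu_h$ and $\omega=K$ yields
\begin{align*}
  \dashint_K\abs{\F(\DD\EE\uu_h)-\F(\DD_h\uu_h)}^2
  \eqsim
  \dashint_K\abs{\F(\DD\EE\uu_h)-\mean{\F(\DD\EE\uu_h)}_K}^2.
\end{align*}
The right-hand side is the squared $L^2(K)$-distance of $\F(\DD\EE\uu_h)$ from its best constant approximation, hence it is bounded by $\dashint_K\abs{\F(\DD\EE\uu_h)-\F(\DD_h\vv_{h|K})}^2$ for any $\vv_h\in\Z_h$. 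A triangle inequality inserting $\F(\DD\uu)$, followed by multiplication by $|K|$ and summation over $K\in\T$, gives
\begin{align*}
  \norm{\F(\DD\EE\uu_h)-\F(\DD_h\uu_h)}_{2}^2
  \lesssim
  \norm{\F(\DD\EE\uu_h)-\F(\DD\uu)}_{2}^2
  + \norm{\F(\DD\uu)-\F(\DD_h\vv_h)}_{2}^2,
\end{align*}
where the first summand is again controlled via Theorem~\ref{T:M-DB=DD1}.

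Combining the two displays with $(a+b)^2\le 2a^2+2b^2$ in the outer triangle inequality, and using $\sqrt{a^2+b^2}\le a+b$ after taking the infimum over $\vv_h\in\Z_h$, one obtains the assertion. I do not expect any real technical obstacle: the key conceptual step is the observation that the discrepancy between $\F(\DD\EE\uu_h)$ and $\F(\DD_h\uu_h)$ is, elementwise, exactly a best constant approximation error for $\F(\DD\EE\uu_h)$ in $L^2(K)$, so it can be bounded by comparison with \emph{any} constant---in particular $\F(\DD_h\vv_{h|K})$---without having to pay the usual shift-change cost that complicated the proof of Theorem~\ref{cr:thm1}.
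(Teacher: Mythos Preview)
Your argument is correct and takes a genuinely different route from the paper. The paper starts from a quasi-triangle inequality splitting through $\F(\DD_h\vv_h)$ and then bounds $\norm{\F(\DD_h\uu_h)-\F(\DD_h\vv_h)}_2^2$ by writing $\DD_h(\uu_h-\vv_h)=\DD_h\ICR\EE(\uu_h-\vv_h)$, applying Jensen to pass to $\DD\EE(\uu_h-\vv_h)$, and then performing a shift-change from $|\DD_h\uu_h|$ to $|\DD\uu|$; this introduces a term $\delta\,\norm{\F(\DD\uu)-\F(\DD_h\uu_h)}_2^2$ which has to be absorbed on the left by choosing $\delta$ small. Your approach instead splits through $\F(\DD\EE\uu_h)$ and exploits Lemma~\ref{lem:anymean} together with the identity $\DD_h\uu_{h|K}=\mean{\DD\EE\uu_h}_K$ to recognise $\norm{\F(\DD\EE\uu_h)-\F(\DD_h\uu_h)}_{2;K}^2$ as (equivalent to) an $L^2(K)$ best-constant error, which can be compared to the specific constant $\F(\DD_h\vv_{h|K})$ for free. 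This bypasses the shift-change/absorption machinery entirely and yields a cleaner argument; the paper's route is more in line with the techniques used elsewhere (e.g.\ in the proof of Theorem~\ref{cr:thm1}) and does not rely on Lemma~\ref{lem:anymean} at this point.
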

\begin{proof}
  Let \(\vv_h\in\Z_h\) be arbitrary. Lemma~\ref{ZSH}, Corollary~\ref{C:rightInverse} and  \eqref{eq:ICRstab} reveal
  \begin{align*}
    \norm{\F(\DD_h\uu_h)-\F(\DD_h\vv_h)}_{2}^2&\eqsim
                                                   \int_{\D}\phi_{|\DD_h\uu_h|}\left(\left|\DD_h\ICR\EE(\uu_h-\vv_h)\right|\right)
    \\
    &\le  \sum_{K\in\T}\int_K\phi_{|\DD_h\uu_h|}\left(\dashint_K
      \left|\DD\EE(\uu_h-\vv_h)\right|\right)
    \\
    &\le \int_{\D}\phi_{|\DD_h\uu_h|}\left(
      \left|\DD\EE(\uu_h-\vv_h)\right|\right), 
  \end{align*}
  with Jensen's inequality in the last step. Changing the
  shift to \(|\DD\uu|\) with~\eqref{eq:change_of_shift} for some
  \(\delta>0\) and 
  applying the quasi-triangle inequality~\eqref{eq:qtriangle} yields
  \begin{multline*}
    \int_{\D}\phi_{|\DD_h\uu_h|}\left(
      \left|\DD\EE(\uu_h-\vv_h)\right|\right)\\
    \begin{aligned}
      &\le C_\delta\int_{\D}\phi_{|\DD\uu|}\left(
        \left|\DD\EE(\uu_h-\vv_h)\right|\right)
      +\delta \int_{\D} \abs{\F(\DD \uu) - \F(\DD_h \uu_h)}_2^2
      \\
      &\lesssim C_\delta\int_{\D}\phi_{|\DD\uu|}\left(
        \left|\DD\uu-\DD\EE\uu_h\right|\right)+C_\delta\int_{\D}\phi_{|\DD\uu|}\left(
        \left|\DD\uu-\DD\EE\vv_h\right|\right)
      \\
      &\qquad+\delta\, \norm{\F(\DD \uu) - \F(\DD_h \uu_h)}_2^2
      \\
      &\eqsim
      C_\delta\big(\|\F(\DD\uu)-\F(\DD\EE\uu_h)\|_2^2+\|\F(\DD\uu)-\F(\DD\EE\vv_h)\|_2^2\big)
      \\
      &\quad+ \delta \|\F(\DD\uu)-\F(\DD_h\uu_h)\|_2^2
      \\
      &\lesssim C_\delta\|\F(\DD\uu)-\F(\DD\EE\vv_h)\|_2^2+ \delta \|\F(\DD\uu)-\F(\DD_h\uu_h)\|_2^2,
    \end{aligned}
  \end{multline*}
  where we have used Theorem~\ref{T:M-DB=DD1} in the last step. With a quasi-triangle
  inequality, we thus obtain
  \begin{align*}
    \norm{\F(\DD\uu)-\F(\DD_h\uu_h)}_{2}^2&\lesssim
    \norm{\F(\DD\uu)-\F(\DD_h\vv_h)}_{2}^2+\norm{\F(\DD_h\uu_h)-\F(\DD_h\vv_h)}_{2}^2
    \\
    &\lesssim
      \norm{\F(\DD\uu)-\F(\DD_h\vv_h)}_{2}^2+C_\delta\norm{\F(\DD\uu)-\F(\DD\EE\vv_h)}_{2}^2
    \\
    &\quad+ \delta \|\F(\DD\uu)-\F(\DD_h\uu_h)\|_2^2.
  \end{align*}
  From this the claimed bound follows for \(\delta>0\) small enough.
\end{proof}

In order to establish velocity-error estimates like \eqref{intro:eq5} and \eqref{eq:BestRate}, we must assess the approximation power of the space $\EE (\CRd)$ in the natural distance.

\begin{lemma}[Smoothing error]\label{cr:symgrad:thm1}
  For all $\vv_h\in\CRd$  we have 
  \begin{align*}
    \norm{\F(\DD_h\vv_h)-\F(\DD\EE\vv_h)}_2^2
    \lesssim \sum_{K\in \T}\sum_{\substack{F\in\FC\\ F\cap K\neq\emptyset} } 
    \int_F h\, \phi_{|\DD_h\vv_{h|K}|}\left( h^{-1}\left|\Jump{ \vv_h}_F \right|\right)
    =:\J({\vv_h})\,.
  \end{align*}
  The hidden constant depends only on 
  \(r^\pm\)
  , \(d\) and the shape
	constant of \(\T\). 
\end{lemma}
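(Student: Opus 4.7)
The plan is to reduce the inequality to a pointwise elementwise estimate and then exploit the stability properties of \(\EE\) from Lemma~\ref{L:E_CR} to convert the local smoothing defect into a sum of face contributions weighted by the jumps of \(\vv_h\).

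First, by Lemma~\ref{ZSH} I have the equivalence
\begin{align*}
  \|\F(\DD_h\vv_h)-\F(\DD\EE\vv_h)\|_2^2
  \eqsim \sum_{K\in\T}\int_K \phi_{|\DD_h\vv_h|}\bigl(|\DD_h\vv_h-\DD\EE\vv_h|\bigr),
\end{align*}
so it suffices to estimate the right hand side elementwise. On each \(K\in\T\) the function \(\DD_h\vv_h\) is constant, hence the shift \(\phi_{|\DD_h\vv_{h|K}|}\) is a fixed N-function on~\(K\). Since \(\phi_{|\DD_h\vv_{h|K}|}\) is non-decreasing, I can bound the integrand by its supremum and then invoke~\eqref{Eq:E_CRb} (applied with \(\ww_h=-\vv_h\)) to conclude
\begin{align*}
  \int_K \phi_{|\DD_h\vv_h|}\bigl(|\DD_h\vv_h-\DD\EE\vv_h|\bigr)
  &\le |K|\,\phi_{|\DD_h\vv_{h|K}|}\bigl(\|\DD_h(\vv_h-\EE\vv_h)\|_{\infty;K}\bigr)\\
  &\lesssim |K|\,\phi_{|\DD_h\vv_{h|K}|}\!\left(\dashint_K|\DD_h(\vv_h-\EE\vv_h)|\right).
\end{align*}

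The next step is to use the local \(L^1\) bound~\eqref{Eq:E_CRc}, giving \(\dashint_K |\DD_h(\vv_h-\EE\vv_h)|\lesssim |K|^{-1}\sum_{F\cap K\neq\emptyset}\int_F|\Jump{\vv_h}_F|\). Combined with the shape regularity \(|K|\eqsim h_K^d\), \(|F|\eqsim h_K^{d-1}\), this amounts to
\begin{align*}
  \dashint_K|\DD_h(\vv_h-\EE\vv_h)| \lesssim \sum_{F\cap K\neq\emptyset} h_K^{-1}\dashint_F|\Jump{\vv_h}_F|.
\end{align*}
Inserting this into the previous line, applying the quasi-triangle inequality~\eqref{eq:qtriangle} for \(\phi_{|\DD_h\vv_{h|K}|}\) (whose indices of uniform convexity depend only on \(r^\pm\)) to split the finite sum over faces, and then using Jensen's inequality on each face, yields
\begin{align*}
  |K|\,\phi_{|\DD_h\vv_{h|K}|}\!\left(\dashint_K|\DD_h(\vv_h-\EE\vv_h)|\right)
  &\lesssim |K|\sum_{F\cap K\neq\emptyset}\dashint_F\phi_{|\DD_h\vv_{h|K}|}\bigl(h_K^{-1}|\Jump{\vv_h}_F|\bigr)\\
  &\eqsim \sum_{F\cap K\neq\emptyset}\int_F h\,\phi_{|\DD_h\vv_{h|K}|}\bigl(h^{-1}|\Jump{\vv_h}_F|\bigr),
\end{align*}
where the last line uses \(|K|/|F|\eqsim h\). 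Summing over \(K\in\T\) gives exactly \(\J(\vv_h)\).

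I don't expect a genuine obstacle here; the only subtle point is that the shift \(\phi_{|\DD_h\vv_{h|K}|}\) is anchored to the element \(K\) (through its piecewise constant value on the element under consideration) rather than to the neighbouring element sharing a face \(F\). This is precisely why the final bound keeps the shift inside the sum over \(K\) and does not collapse to an expression that depends solely on the faces. Consequently no shift-change (which would produce an additional term as in~\eqref{eq:change_of_shift}) is required in this lemma, keeping the estimate sharp.
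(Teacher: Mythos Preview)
Your proof is correct and follows essentially the same route as the paper: both use Lemma~\ref{ZSH} to pass to the shifted N-function, then \eqref{Eq:E_CRb} (with \(\ww_h=-\vv_h\)) to replace the pointwise defect by its mean, then \eqref{Eq:E_CRc} to convert this into face-jump averages, and finally the quasi-triangle inequality plus Jensen to arrive at~\(\J(\vv_h)\). The only cosmetic difference is that the paper writes the first two steps as a single displayed estimate, while you spell out the intermediate passage through the \(L^\infty\) norm.
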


\begin{proof}
Property~\eqref{Eq:E_CRc} implies for all
$\vv_h\in\CRd$ and $K\in\T$ that
\begin{align}
  \begin{aligned}
    \dashint_K \left| \DD_h\vv_h-\DD\EE\vv_h\right| 
    &\lesssim \sum_{\substack{F\in\FC\\ K\cap
        F\neq\emptyset}} \dashint_F h^{-1}\left|\Jump{\vv_h}_F \right|
  \end{aligned}
\label{cr:symgrad:thm4:proof:eq5}
\end{align}
Using Lemmas~\ref{ZSH} and~\ref{L:E_CR} and \eqref{cr:symgrad:thm4:proof:eq5}, we estimate 
\begin{align*}
\norm{\F(\DD_h\vv_h)-\F(\DD\EE\vv_h)}_{2;K}^2
&\eqsim \int_K
                                                  \phi_{|\DD_h\vv_h|}\left(|\DD_h\vv_h-\DD\EE\vv_h|\right)
  \\
&\lesssim\int_K \phi_{|\DD_h\vv_h|}\left(\dashint_K|\DD_h\vv_h-\DD\EE\vv_h|
       \right)
  \\
  &\lesssim\int_K \phi_{|\DD_h\vv_h|}\Bigg(\sum_{\substack{F\in\FC\\ F\cap K\neq\emptyset}} 
\dashint_F h^{-1}\left|\Jump{ \vv_h }_F \right|
  \Bigg)
  \,.
\end{align*}
It follows from the quasi-triangle inequality~\eqref{eq:qtriangle} and Jensen's-inequality that
\begin{align*}
\norm{\F(\DD_h\vv_h)-\F(\DD\EE\vv_h)}_{2;K}^2
&\lesssim\int_K \sum_{\substack{F\in\FC\\ F\cap K\neq\emptyset}} \phi_{|\DD_h\vv_h|}\left( 
\dashint_F h^{-1}\left|\Jump{ \vv_h }_F \right|
  \right)
  \\
&\lesssim\int_K \sum_{\substack{F\in\FC\\ F\cap K\neq\emptyset}} 
\dashint_F  \phi_{|\DD_h\vv_{h|K}|}\left( h^{-1}\left|\Jump{
       \vv_h }_F \right|\right)
       \\
&\eqsim
\sum_{\substack{F\in\FC\\ F\cap K\neq\emptyset} } 
\int_F h\, \phi_{|\DD_h\vv_{h|K}|}\left( h^{-1}\left|\Jump{\vv_h }_F \right|\right)
.
\end{align*}
By summing over all $K\in\T$, we arrive at the desired estimate.
\end{proof}

Motivated by the above result, we next investigate
the approximation in \(\CRd\) in the natural distance augmented with jumps.
\begin{lemma}\label{cr:symgrad:thm3}
For $\vv\in\WW_0^{1,\phi}(\D)$,
we have 
\begin{align*}
\norm{\F(\DD\vv)-\F(\DD_h\ICR\vv)}_{2}^2+ \J({\ICR\vv})
\lesssim \sum_{K\in\T}\inf_{\QQ_K\in\R^{d\times d}}\norm{\F(\DD\vv)-\F(\QQ_K)}_{2;\omega_K}^2\,.
\end{align*}
  The hidden constant depends only on 
  \(r^\pm\)
  , $d$ and the shape
	constant of \(\T\).
\end{lemma}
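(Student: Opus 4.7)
The plan is to handle the two summands on the left-hand side separately. The natural-distance error of $\ICR\vv$ is immediate from existing results, whereas the jump functional $\J(\ICR\vv)$ constitutes the main technical work.

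For the first summand, I would apply Corollary~\ref{C:QOICR} with $\DB=\DD$ elementwise, which gives directly
\begin{align*}
\norm{\F(\DD\vv)-\F(\DD_h\ICR\vv)}_{2;K}^2\lesssim \inf_{\QQ_K\in\R^{d\times d}}\norm{\F(\DD\vv)-\F(\QQ_K)}_{2;K}^2
\end{align*}
for every $K\in\T$, and summing over $K$ yields more than required (the infimum is over $K$ rather than $\omega_K$). For the jump term, I abbreviate $a_K:=|\DD_h(\ICR\vv)_{|K}|=|\dashint_K\DD\vv|$. Since $\vv$ is continuous across interior faces and has vanishing trace on the boundary, $\Jump{\ICR\vv}_F=\Jump{\ICR\vv-\vv}_F$ for every $F\in\FC$. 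The quasi-triangle inequality~\eqref{eq:qtriangle} then splits each face contribution into (at most two) elementwise terms of the form $\phi_{a_K}(h^{-1}|(\ICR\vv-\vv)_{|K'}|)$, one for each $K'\supset F$.

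Next, I would apply the shift-change~\eqref{eq:change_of_shift} to pass from the shift $a_K$ to $a_{K'}$. The resulting remainder $|\F(\dashint_K\DD\vv)-\F(\dashint_{K'}\DD\vv)|^2$ is absorbed using Lemma~\ref{lem:anymean} and a triangle inequality into the local approximation quantity $\inf_{\QQ_K}\int_{\omega_K}|\F(\DD\vv)-\F(\QQ_K)|^2$, exploiting that $K,K'\subset\omega_K$. The principal term then reads $\int_F h\,\phi_{a_{K'}}(h^{-1}|(\ICR\vv-\vv)_{|K'}|)$. Since $\dashint_F(\ICR\vv-\vv)_{|K'}=0$ by definition of $\ICR$ (for boundary faces this uses the vanishing trace of $\vv$ together with $\ICR\vv\in\CR$), a standard Orlicz Poincar\'e-trace inequality on the simplex $K'$ applied to the uniformly convex N-function $\phi_{a_{K'}}$ -- whose convexity indices are controlled solely by $r^\pm$, uniformly in $a_{K'}$ -- yields
\begin{align*}
\int_F h\,\phi_{a_{K'}}(h^{-1}|(\ICR\vv-\vv)_{|K'}|)\lesssim \int_{K'}\phi_{a_{K'}}(|\nabla(\ICR\vv-\vv)_{|K'}|).
\end{align*}

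By~\eqref{interpolation_operator_cr:eq1}, $\nabla(\ICR\vv)_{|K'}=\dashint_{K'}\nabla\vv$, so the integrand equals $\phi_{a_{K'}}(|\nabla\vv-\dashint_{K'}\nabla\vv|)$. Since $K'$ is a simplex (hence a John domain) and $\phi_{a_{K'}}$ is uniformly convex with indices depending only on $r^\pm$, Korn's inequality from Proposition~\ref{P:Korn1} allows switching from $\nabla$ to $\DD$, bounding the integrand by $\phi_{a_{K'}}(|\DD\vv-\dashint_{K'}\DD\vv|)$. Recalling $a_{K'}=|\dashint_{K'}\DD\vv|$, Lemma~\ref{ZSH} identifies $\int_{K'}\phi_{a_{K'}}(|\DD\vv-\dashint_{K'}\DD\vv|)$ with $\int_{K'}|\F(\DD\vv)-\F(\dashint_{K'}\DD\vv)|^2$, and Lemma~\ref{lem:anymean} shows this is equivalent to $\int_{K'}|\F(\DD\vv)-\mean{\F(\DD\vv)}_{K'}|^2$. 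Since $\mean{\F(\DD\vv)}_{K'}$ is the $L^2$-best constant approximation of $\F(\DD\vv)$ on $K'$, this is further bounded by $\inf_{\QQ_{K'}\in\R^{d\times d}}\int_{K'}|\F(\DD\vv)-\F(\QQ_{K'})|^2$, and \emph{a fortiori} by the same with $\omega_{K'}$ in place of $K'$. Summing over $K\in\T$, over $F$ with $F\cap K\neq\emptyset$, and over $K'\supset F$, the finite overlap of the patches $\omega_{K}$ together with $\#\Neigh_K\leq m_0$ from~\eqref{element_neighbourhood} yields the claim.

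The main obstacle I anticipate is the careful bookkeeping of the triple summation over $K$, $F$, and $K'$ in combination with the shift-change remainders, ensuring that all auxiliary terms -- both those from~\eqref{eq:change_of_shift} and from the repeated use of the quasi-triangle inequality -- collapse into the single patch-based local approximation sum on the right-hand side, with all constants independent of the regularisation parameter $\epsilon\geq 0$.
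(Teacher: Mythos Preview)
Your proposal is correct and reaches the same bound, but the treatment of $\J(\ICR\vv)$ follows a genuinely different route from the paper in two linked respects. First, the target of the shift-change: you shift from $a_K$ to the neighbouring discrete value $a_{K'}$ and then invoke Lemma~\ref{lem:anymean} to certify that this specific choice is quasi-optimal, whereas the paper shifts to an \emph{arbitrary} constant $\QQ\in\R^{d\times d}$, carries it through the entire chain, and optimises over $\QQ$ only at the end; this spares the paper your separate estimate of the remainder $|\F(\dashint_K\DD\vv)-\F(\dashint_{K'}\DD\vv)|^2$. Second, and more substantially, the passage from face to volume: you invoke a modular Poincar\'e--trace inequality $\int_F h\,\phi_{a_{K'}}(h^{-1}|w|)\lesssim\int_{K'}\phi_{a_{K'}}(|\nabla w|)$ for functions with vanishing face mean, which is indeed available for uniformly convex N-functions but is not stated in the paper. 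The paper instead exploits that $\Jump{\ICR\vv}_F$ is affine on $F$ (hence pointwise comparable to its average), applies the $W^{1,1}$-type jump bound of Lemma~\ref{L:jump<nabla} to obtain the scalar $\dashint_{K'}|\nabla(\ICR\vv-\vv)|$, and re-enters the N-function via Jensen's inequality. This $W^{1,1}$-plus-Jensen detour keeps everything within the paper's explicitly stated toolbox, while your single Orlicz trace step is more compact but imports an external ingredient. From that point on the two arguments coincide: Korn's inequality (Proposition~\ref{P:Korn1}) via~\eqref{interpolation_operator_cr:eq1}, identification through Lemma~\ref{ZSH}, and conclusion by Corollary~\ref{C:QOICR}.
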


\begin{proof}
Owing to Corollary~\ref{C:QOICR}, we have for all \(K\in\T\) that
\begin{align}\label{eq:qo-ICR}
  \norm{\F(\DD\vv)-\F(\DD_h\ICR\vv)}_{2;K}^2\lesssim \inf_{\QQ_K\in\R^{d\times d}}\norm{\F(\DD\vv)-\F(\QQ_K)}_{2;K}^2.
\end{align}

In order to bound the term \(\J({\ICR\vv})\), we first employ~\eqref{eq:change_of_shift} to
change the shift to some arbitrary \(\QQ\in\R^{d\times d}\) and get
\begin{align}\label{eq:jump-chd-shift}
  \begin{aligned}
    &\sum_{\substack{K\in\T\\F\cap K\neq\emptyset}}\int_{F}h\phi_{\left|\DD\ICR\vv_{|K}\right|}
    \left(h^{-1}\left|\Jump{ \ICR\vv
        }_F\right|\right)
    \\
    &\qquad\lesssim \sum_{\substack{K\in\T\\F\cap K\neq\emptyset}}\int_{F}h\phi_{\left|\QQ\right|}
    \left(h^{-1}\left|\Jump{ \ICR\vv
        }_F\right|\right)
    +\sum_{\substack{K\in\T\\F\cap K\neq\emptyset}}\int_{K}\abs{\F(\DD_h \ICR\vv) 
        -\F(\QQ)}^2
    ,
     \end{aligned}
  \end{align}
where we have used that \(\DD_h \ICR\vv\) is piecewise constant. Using again Corollary~\ref{C:QOICR}, The latter term can be bounded by
\begin{align}\label{eq:jump-chd-shift<qo}
  \begin{aligned}
    \sum_{\substack{K\in\T\\F\cap K\neq\emptyset}}\|\F(\DD_h \ICR
    \vv)-\F(\QQ)\|_{2;K}^2
    \lesssim \sum_{\substack{K\in\T\\F\cap K\neq\emptyset}}\|\F(\DD
    \vv)-\F(\QQ)\|_{2;K}^2.
  \end{aligned}
\end{align}
For the
first term at the right hand side of~\eqref{eq:jump-chd-shift}, we have 
\begin{align*}
  \int_{F}h\phi_{\left|\QQ\right|}
\left(h^{-1}\left|\Jump{ \ICR\vv }_F\right|\right)
  \lesssim |F|h_F\,\phi_{\left|\QQ\right|}
\left(\dashint_F h^{-1}\left|\Jump{ \ICR\vv
  }_F\right|
  \right),
\end{align*}
where we used that \(\left|\Jump{\ICR\vv
 }_F\right|\lesssim \dashint_F\left|\Jump{ \ICR\vv
  }_F\right|
\) since \(\Jump{\ICR\vv
}_F\) is linear on \(F\). Note that thanks
to \(\vv \in \WW_0^{1,\phi}(\Omega)\), we have \(\Jump{ \vv}_{F}=0\) in
\(F\in\FC\).  Consequently, \(\Jump{ \ICR\vv}_{F}=\Jump{\ICR\vv-\vv}_{F}\) in
\(F\in\FC\) and it follows from
Lemma~\ref{L:jump<nabla} that
\begin{align*}
&\dashint_F h^{-1}\left|\Jump{\ICR\vv }_F \right|
=
\dashint_F h^{-1}\left|\Jump{ \ICR\vv - \vv}_F \right|
\lesssim\sum_{\substack{K\in\T\\F\subset K}}
\dashint_{K} \left|\nabla(\ICR\vv - \vv)\right|
  \,.
\end{align*}
This and Jensen's inequality imply
  \begin{align*}
    \int_{F}h\phi_{\left|\QQ\right|}
\left(h^{-1}\left|\Jump{ \ICR\vv }_F\right|\right)
&\lesssim \int_{F}h\phi_{\left|\QQ\right|}
\left(\sum_{\substack{K\in\T\\F\subset K}}
  \dashint_{K} \left|\nabla(\ICR\vv - \vv)\right|
    \right)
\\
&\lesssim \sum_{\substack{K\in\T\\F\subset K}}
  \int_{K}\phi_{\left|\QQ\right|}
  \left(\left|\nabla(\ICR\vv - \vv)\right|\right)
    \,.
  \end{align*}
  Thanks to~\eqref{interpolation_operator_cr:eq1} we can apply the
  Korn inequality (Proposition~\ref{P:Korn1})
  and obtain 
\begin{align*}
    \int_{F}h\phi_{\left|\QQ\right|}
\left(h^{-1}\left|\Jump{ \ICR\vv }_F\right|\right)
&\lesssim \sum_{\substack{K\in\T\\F\subset K}}
  \int_{K}\phi_{\left|\QQ\right|}
  \left(\left|\DD(\ICR\vv - \vv)\right|\right)
  \,.
\end{align*}
 At this point, we again invoke~\eqref{eq:change_of_shift} to change
 the shift to \(\DD\vv\) and obtain with Lemma~\ref{ZSH}
 \begin{align*}
   &\int_{F}h\phi_{\left|\QQ\right|}
   \left(h^{-1}\left|\Jump{ \ICR\vv }_F\right|\right)\\
     &\qquad \qquad\lesssim \sum_{\substack{K\in\T\\F\subset K}}\left(\int_{K}\phi_{\left|\DD\vv\right|}
     \left(\left|\DD(\ICR\vv -
         \vv)\right|\right)+\int_K\abs{\F(\DD\vv)-\F(\QQ)}^2 \right)
     \,
     \\
     &\qquad \qquad \lesssim \sum_{\substack{K\in\T\\F\subset K}}
     \left(\|\F(\DD\vv)-\F(\DD\ICR\vv)\|_{2;K}^2+\|\F(\DD\vv)-\F(\QQ)\|_{2;K}^2\right)
     \\
     &\qquad \qquad \lesssim \sum_{\substack{K\in\T\\F\subset K}}\|\F(\DD\vv)-\F(\QQ)\|_{2;K}^2\,,
 \end{align*}
 where we used~\eqref{eq:qo-ICR} in the last step. Combining this
 with~\eqref{eq:jump-chd-shift} and \eqref{eq:jump-chd-shift<qo} and
 recalling that \(\QQ\in\R^{d\times d}\) was arbitrary, we
 have proved for \(F\in \FC\) that
 \begin{align*}
    \sum_{\substack{K\in\T\\F\cap K\neq\emptyset}}\int_{F}h\phi_{\left|\DD\ICR\vv_{|K}\right|}
    \left(h^{-1}\left|\Jump{ \ICR\vv
  }_F\right|\right)
   \lesssim \sum_{\substack{K\in\T\\F\cap K\neq\emptyset}}\|\F(\DD\vv)-\F(\QQ)\|_{2;K}^2.
 \end{align*}
 Note that \(\F:\R^{d\times d}\to\R^{d\times d}\) is surjective and
 since \(\QQ\in\R^{d\times d}\) is arbitrary, we have
 \begin{align*}
   \sum_{\substack{K\in\T\\F\cap K\neq\emptyset}}\int_{F}h\phi_{\left|\DD\ICR\vv_{|K}\right|}
    \left(h^{-1}\left|\Jump{\ICR\vv}_F\right|\right)
    \lesssim \inf_{\QQ\in\R^{\d\times d}}\sum_{\substack{K\in\T\\F\cap K\neq\emptyset}}\|\F(\DD\vv)-\QQ\|_{2;K}^2.
 \end{align*}
 Together with~\eqref{eq:qo-ICR}, summing over all \(F\in\FC\) 
 finally proves
 the assertion. 
\end{proof}

With this preparation, we can prove the following main result. 
\begin{theorem}[Velocity-error]\label{T:aprioriCRDD1}
  Let $(\uu,p)\in\WW_0^{1,\phi}(\D)\times L_0^{\phi^*}(\D)$ be the solution to \eqref{wp:eq1}  
and let $(\uu_h,p_h)\in\CRd\times {\wh S}_0^0$ be computed by Method~\ref{M:DB=DD1}. 
Then we have 
  \begin{align*}
      \norm{\F(\DD\uu)-\F(\DD_h\uu_h)}_{2}^2+\norm{\F(\DD\uu)-\F(\DD\EE\uu_h)}_{2}^2 
      \lesssim \sum_{K\in\T}
                                                \inf_{\QQ_K\in\R^{d\times
                                                  d}}\norm{\F(\DD\uu)-\QQ_K}_{2;\omega_K}^2.
  \end{align*}
In particular, we have for  $\F(\DD\uu)\in\WW^{1,2}(\D)$ the estimate
\begin{align*}
\norm{\F(\DD\uu)-\F(\DD_h\uu_h)}_{2} + \norm{\F(\DD\uu)-\F(\DD\EE\uu_h)}_{2}
  \lesssim \norm{h\nabla\F(\DD\uu)}_{2}\,.
\end{align*}
  The hidden constant depends only on 
  \(r^\pm\)
  , $d$ and the shape
	constant of \(\T\). 
\end{theorem}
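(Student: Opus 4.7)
The plan is to combine the a priori bounds from Theorems~\ref{T:M-DB=DD1} and~\ref{T:M-DB=DD2} with the approximation results of Lemmas~\ref{cr:symgrad:thm1} and~\ref{cr:symgrad:thm3}, choosing $\vv_h = \ICR \uu$ as the discrete test candidate. The first point to verify is that $\ICR\uu \in \Z_h$. This follows from~\eqref{interpolation_operator_cr:eq1} since $\divergenz_h(\ICR\uu)_{|K} = \dashint_K \divergenz\uu = 0$ because $\uu \in \Z$.

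With this choice, Theorems~\ref{T:M-DB=DD1} and~\ref{T:M-DB=DD2} give
\begin{align*}
  \norm{\F(\DD\uu)-\F(\DD\EE\uu_h)}_{2} &\lesssim \norm{\F(\DD\uu)-\F(\DD\EE\ICR\uu)}_{2},\\
  \norm{\F(\DD\uu)-\F(\DD_h\uu_h)}_{2} &\lesssim \norm{\F(\DD\uu)-\F(\DD\EE\ICR\uu)}_{2}+\norm{\F(\DD\uu)-\F(\DD_h\ICR\uu)}_{2}.
\end{align*}
Hence the whole task reduces to estimating the two terms on the right in terms of local best-approximation errors. For the second summand this is direct from Lemma~\ref{cr:symgrad:thm3}. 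For the first summand I will insert $\F(\DD_h \ICR \uu)$ via the quasi-triangle inequality~\eqref{eq:qtriangle} and then bound the gap $\F(\DD_h \ICR \uu) - \F(\DD \EE \ICR \uu)$ by the smoothing-error estimate of Lemma~\ref{cr:symgrad:thm1}, obtaining
\begin{equation*}
  \norm{\F(\DD\uu)-\F(\DD\EE\ICR\uu)}_{2}^2 \lesssim \norm{\F(\DD\uu)-\F(\DD_h\ICR\uu)}_{2}^2 + \J(\ICR\uu).
\end{equation*}
Invoking once more Lemma~\ref{cr:symgrad:thm3} then controls both contributions simultaneously by $\sum_{K}\inf_{\QQ_K}\norm{\F(\DD\uu)-\F(\QQ_K)}_{2;\omega_K}^2$. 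Since $\F\colon\R^{d\times d}\to\R^{d\times d}$ is surjective, replacing $\F(\QQ_K)$ by an arbitrary $\QQ_K\in\R^{d\times d}$ yields the first claim.

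The second claim follows from standard approximation theory: for $\F(\DD\uu) \in \WW^{1,2}(\D)$, choosing $\QQ_K = \dashint_{\omega_K}\F(\DD\uu)$ and applying a Poincar\'e inequality on the patch $\omega_K$ (whose diameter is $\eqsim h_K$ by shape regularity) gives
\begin{equation*}
  \inf_{\QQ_K\in\R^{d\times d}}\norm{\F(\DD\uu)-\QQ_K}_{2;\omega_K}^2 \lesssim h_K^2 \norm{\nabla\F(\DD\uu)}_{2;\omega_K}^2,
\end{equation*}
and summing over $K\in\T$ using the finite overlap of the patches $\{\omega_K\}_{K\in\T}$ from~\eqref{element_neighbourhood} delivers $\norm{h\nabla\F(\DD\uu)}_2^2$. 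The delicate point is really the interplay between the nonconforming error (jumps) and the smoothing error: changes of shift in $\phi_{|\cdot|}$ produced by Lemmas~\ref{cr:symgrad:thm1} and~\ref{cr:symgrad:thm3} could in principle create undesirable cross terms of $\F$-distance type. However, since all such absorbing terms are already present on the right-hand side through the best-approximation quantity (as was the case in the proof of Lemma~\ref{cr:symgrad:thm3}), no extra work beyond carefully chaining the cited results is required, and no absorption onto the left-hand side is needed here, as the two main estimates from Theorems~\ref{T:M-DB=DD1} and~\ref{T:M-DB=DD2} are already in final form.
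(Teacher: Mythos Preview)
Your proof is correct and follows essentially the same route as the paper: combine Theorems~\ref{T:M-DB=DD1} and~\ref{T:M-DB=DD2} with Lemma~\ref{cr:symgrad:thm1} to reduce to $\norm{\F(\DD\uu)-\F(\DD_h\vv_h)}_{2}^2+\J(\vv_h)$, then take $\vv_h=\ICR\uu\in\Z_h$ and apply Lemma~\ref{cr:symgrad:thm3}. Your additional details (verifying $\ICR\uu\in\Z_h$, the Poincar\'e argument for the rate, and the surjectivity of $\F$) are welcome; the only cosmetic point is that the splitting $\norm{\F(\DD\uu)-\F(\DD\EE\ICR\uu)}_{2}^2 \lesssim \norm{\F(\DD\uu)-\F(\DD_h\ICR\uu)}_{2}^2 + \norm{\F(\DD_h\ICR\uu)-\F(\DD\EE\ICR\uu)}_{2}^2$ is just the ordinary $L^2$ triangle inequality and does not actually require~\eqref{eq:qtriangle}.
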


\begin{proof}
  We conclude from Theorems~\ref{T:M-DB=DD1} and~\ref{T:M-DB=DD2}, and
  Lemma~\ref{cr:symgrad:thm1} that
  \begin{multline*}
    \norm{\F(\DD\uu)-\F(\DD_h\uu_h)}_{2}^2+\norm{\F(\DD\uu)-\F(\DD\EE\uu_h)}_{2}^2 
    \\
    \lesssim \inf_{\vv_h\in\Z_h}\big\{\norm{\F(\DD\uu)-\F(\DD_h\vv_h)}_{2}^2+\J(\vv_h)\big\}.
  \end{multline*}
  Choosing \(\vv_h=\ICR\uu\in\Z_h\), the assertion follows from Lemma~\ref{cr:symgrad:thm3}. 
\end{proof}

\begin{remark}[Stabilization for the symmetric gradient problem]
In order to handle the non-definiteness of the
symmetric gradient, one can also consider a jump penalization of our 
method as it is common for discontinuous Galerkin (DG) methods; compare with \cite{DiPietroErn:2012}. 
This has been done by Kaltenbach and \Ruzicka{} in \cite{Kaltenbach:2023a,Kaltenbach:2023b,Kaltenbach:2023c} 
for a DG approximation of $(r,\epsilon)$-structure systems. 
However, their approach suffers from non-monotonicity of the stabilized nonlinear operator 
so that uniqueness of the discrete solution cannot be guaranteed. 
In contrast, our Method \ref{M:DB=DD1} features monotonicity and hence unique solvability. 
\end{remark}

\section{Estimates of the pressure-error} 
\label{sec:press-err}

Although the above velocity-error estimates for
Methods~\ref{M:DD=nabla} and 
\ref{M:DB=DD1} are pressure robust,
i.e. independent of the pressure error, the pressure-error itself
depends on the velocity-error. Consequently, improving the velocity
approximation can improve also the pressure approximation in some
cases.  To make this more precise, we combine ideas
 from~\cite[Theorem 4.7]{Belenki:2012} with the results in the
 previous sections.
For the ease of presentation, we restrict ourselves to problems with \((r,\epsilon)\)-structure.

\begin{theorem}[Pressure-error]\label{T:pressErr}
  Let \(\A\) have \((r,\epsilon)\)-structure \eqref{def:phi_function} 
  for some $1 < r < \infty$ and $\epsilon \geq 0$. 
  Let $(\uu,p)\in\WW_0^{1,r}(\D)\times L_0^{r'}(\D)$ be the unique solution to~\eqref{wp:eq1}  
  and let $(\uu_h,p_h)\in\CRd\times {\wh S}_0^0$ be obtained with either
  Method~\ref{M:DD=nabla} or 
\ref{M:DB=DD1}. Then we have 
\begin{align*}
\norm{p-p_h}_{r'}\lesssim  \left(\sum_{K\in\T}\inf_{\QQ_K\in\R^{d\times d}}
\norm{\F(\DB\uu)-\QQ_K}_{2;\omega_K}^2\right)^{\frac12\min\{1,\frac{2}{r'}\}}+\inf_{q_h\in {\wh S}_0^0}\norm{p-q_h}_{r'}\,.
\end{align*}
In particular, when $\F(\nabla \uu)\in\WW^{1,2}(\D)$ and \(p\in
W^{1,r'}(\Omega)\), we have
\begin{align}\label{eq:presRate}
	\norm{p-p_h}_{r'}\lesssim 
	\norm{h\nabla\F(\DB\uu)}_{2}^{\min\{1,\frac{2}{r'}\}}+\norm{h\nabla p}_{r'}.
\end{align}
Here \(\DB=\nabla\) or \(\DB=\DD\) depending on the context and
for \(r>2\), the hidden constant depends also on
\(\ff\) but is robust for \(\epsilon\to 0\).
\end{theorem}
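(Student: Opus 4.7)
The plan is to derive the bound from the discrete inf-sup condition \eqref{CR:inf_sup_cond:norm} combined with a consistency identity that couples the pressure residual to the velocity residual in the natural distance. Let $q_h^\ast \in \wh S_0^0$ be a best approximation of $p$ in $L^{r'}(\Omega)$. A triangle inequality and the inf-sup condition yield some $\vv_h\in\CRd$ with
\[
\|p-p_h\|_{r'} \lesssim \|p-q_h^\ast\|_{r'} + \frac{\int_\Omega (q_h^\ast-p_h)\divergenz_h \vv_h}{\|\nabla_h \vv_h\|_r}.
\]
Adding and subtracting $p$ in the numerator splits it into a term controlled by $\|p-q_h^\ast\|_{r'}\|\nabla_h \vv_h\|_r$ (via Hölder, together with $\|\divergenz_h\vv_h\|_r\lesssim\|\nabla_h\vv_h\|_r$) and the residual $\int_\Omega (p-p_h)\divergenz_h \vv_h$. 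To handle the latter, I test the continuous equation \eqref{wp:eq1} against $\EE \vv_h \in \WW^{1,\phi}_0(\D)$ and subtract the first equation of Method~\ref{M:DD=nabla} (respectively Method~\ref{M:DB=DD1}). Using $\divergenz \EE \vv_h = \divergenz_h \vv_h$ from \eqref{E:Divh=DivE} to cancel the pressure terms and, for Method~\ref{M:DD=nabla}, \eqref{E:nabladG=nablaE} applied to the piecewise constant tensor $\A(\nabla_h \uu_h) \in (S_0^0)^{d\times d}$ to convert $\nabla_h \vv_h$ into $\nabla \EE \vv_h$, one obtains the consistency identity
\[
\int_\Omega (p-p_h)\divergenz_h \vv_h = \int_\Omega \bigl(\A(\DB \uu) - \A(\DB_\ast \uu_h)\bigr) : \DB_\ast \EE \vv_h,
\]
where $\DB_\ast\uu_h$ stands for $\nabla_h\uu_h$ (Method~\ref{M:DD=nabla}) or $\DD\EE\uu_h$ (Method~\ref{M:DB=DD1}). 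Hölder together with the $W^{1,r}$-stability of $\EE$ from \eqref{cr:lem1:eq2} then bounds this by $\|\A(\DB\uu)-\A(\DB_\ast \uu_h)\|_{r'}\|\nabla_h \vv_h\|_r$.

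The crux of the proof is to convert $\|\A(\DB \uu) - \A(\DB_\ast \uu_h)\|_{r'}$ into the natural distance $\mathcal{E}_\F \coloneqq \|\F(\DB \uu) - \F(\DB_\ast \uu_h)\|_2$ with the sharp exponent $\min(1, 2/r')$. The $(r,\epsilon)$-structure yields the pointwise equivalence
\[
|\A(\PP) - \A(\QQ)|^{r'} \eqsim (\epsilon + |\PP| + |\QQ|)^{(r-2)r'/2}|\F(\PP) - \F(\QQ)|^{r'}.
\]
For $r \le 2$ the weight has non-positive exponent, so I argue instead via the explicit formula $(\phi_a)^*(t) \eqsim ((\epsilon+a)^{r-1}+t)^{r'-2}t^2$, from which (by splitting at $t=(\epsilon+a)^{r-1}$) one reads off $(\phi_{|\PP|})^*(t) \gtrsim t^{r'}$; Lemma~\ref{ZSH} then gives $|\A(\PP) - \A(\QQ)|^{r'} \lesssim |\F(\PP) - \F(\QQ)|^2$ pointwise, and integration produces $\|\A - \A\|_{r'} \lesssim \mathcal{E}_\F^{2/r'}$. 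For $r > 2$ the weight has positive exponent; applying Hölder with exponents $2/r'$ and $2/(2-r')$ and using the algebraic identity $(r-2)r'/(2-r') = r$ yields
\[
\|\A(\DB\uu) - \A(\DB_\ast \uu_h)\|_{r'} \lesssim \mathcal{E}_\F \cdot \|\epsilon + |\DB\uu| + |\DB_\ast \uu_h|\|_r^{(r-2)/2}.
\]
The weighted norm is bounded by a constant depending on $\|\ff\|_{\WW^{-1,r'}}$ uniformly in $\epsilon$, using energy estimates obtained by testing the continuous and discrete equations with $\uu$ and $\uu_h$ respectively, together with $\phi(t)\gtrsim t^r$ for $r\ge 2$.

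Combining these estimates with Theorem~\ref{cr:thm1} (respectively Theorem~\ref{T:aprioriCRDD1}) to replace $\mathcal{E}_\F^2$ by $\sum_K \inf_{\QQ_K}\|\F(\DB\uu)-\QQ_K\|_{2;\omega_K}^2$ yields the first claim. The rate \eqref{eq:presRate} then follows from standard local approximation, taking $\QQ_K = \mean{\F(\DB\uu)}_{\omega_K}$ and $q_h|_K = \mean{p}_K$ to bound the respective infima by $\|h\nabla \F(\DB\uu)\|_2$ and $\|h\nabla p\|_{r'}$. The main obstacle is precisely the pointwise-to-integral conversion for $\A$: the exponent $\min(1, 2/r')$ and the $\ff$-dependence of the constant (only for $r>2$) emerge from two structurally different Hölder arguments in the cases $r\le 2$ and $r>2$, and it is this case distinction—together with the need for $\epsilon$-robust control of $\|\nabla_h\uu_h\|_r$—that makes the step delicate.
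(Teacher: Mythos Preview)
Your proof is correct and follows essentially the same route as the paper: split off a best approximation of $p$, use the discrete inf-sup condition~\eqref{CR:inf_sup_cond:norm}, derive the consistency identity from~\eqref{wp:eq1}, the method, and the properties~\eqref{E:Divh=DivE},~\eqref{E:nabladG=nablaE} of~$\EE$, then convert $\|\A(\DB\uu)-\A(\DB_\ast\uu_h)\|_{r'}$ to the natural distance with the case distinction $r\lessgtr 2$ and invoke Theorems~\ref{cr:thm1}/\ref{T:aprioriCRDD1}. The only cosmetic difference is that the paper takes $q_h^\ast=\pi p$ to be the $L^2$-projection onto $\wh S_0^0$, so that $\int_\Omega(\pi p-p)\divergenz_h\vv_h=0$ by orthogonality (since $\divergenz_h\vv_h\in\wh S_0^0$), which spares your add--subtract step; and the paper cites \cite[Lemma~4.6]{Belenki:2012}/\cite[Lemma~2.4]{Hirn:2010} for the $\|\A-\A\|_{r'}$-to-$\|\F-\F\|_2$ conversion that you spell out directly.
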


\begin{proof}
  We  focus on the case of Method~\ref{M:DD=nabla}.  
  For Method~\ref{M:DB=DD1} the assertion follows with minor changes.
  Let $\pi p\in {\wh S}_0^0$ be the $L^2$-orthogonal projection of $p$ and apply the triangle inequality 
  \[
    \norm{p-p_h}_{r'}\leq \norm{p-\pi p}_{r'}+\norm{\pi p-p_h}_{r'}.
  \]
  Owing to Jensen's inequality, the projection $\pi$ is bounded,
  cf. \eqref{eq:ICRstab} where the same argument is used. Therefore,
  the first term on the right-hand side above is bounded by 
  \begin{equation*}
  	\norm{p-\pi p}_{r'} \leq 2 \inf_{q_h \in {\wh S}_0^0}\norm{ p - q_h}_{r'}.
  \end{equation*} 
  For the second term we invoke the inf-sup stability
  \eqref{CR:inf_sup_cond:norm}, $\divergenz_h \CRd = {\wh
    S}_0^0$ and \eqref{E:Divh=DivE}, to obtain
  \begin{align*}
        \beta\norm{\pi p-p_h}_{r'}&\le
                              \sup_{\vv_h\in\CRd}\frac{\int_{\D} (\pi
                                    p-p_h)\divergenz_h
                                    \vv_h}{\norm{\nablah\vv_h}_{r}}
    = \sup_{\vv_h\in\CRd}
\frac{\int_{\D}  (p-p_h)\divergenz \EE\vv_h}{\norm{\nablah\vv_h}_{r}}.
  \end{align*}
  For the enumerator, the  continuous equations~\eqref{wp:eq1}
  and Method~\ref{M:DD=nabla} imply
  \begin{align*}
    \int_{\D}  (p-p_h)\divergenz \EE\vv_h
      &=\int_{\D}\left(\A(\nabla
        \uu)-\A(\nablah\uu_h)\right):\nabla\EE\vv_h 
      \\
      &\le \|\A(\nabla \uu)-\A(\nablah\uu_h)\|_{r'}
      \|\nabla\EE\vv_h\|_{r}.
  \end{align*}
  Combining the above estimates with~\eqref{cr:lem1:eq2}  and
  \cite[Lemma 4.6]{Belenki:2012} or \cite[Lemma 2.4]{Hirn:2010} thus
  yields 
  \begin{align*}
    \norm{\pi p-p_h}_{r'} &\lesssim \|\A(\nabla \uu)-\A(\nablah\uu_h)\|_{r'}
    \\
    &\lesssim \begin{cases}
      \norm{\F(\nabla\uu)-\F(\nablah\uu_h)}_2^{\frac{2}{r'}} &\text{if }r\in(1, 2]
      \\
      \norm{\ep+|\nabla\uu|+|\nablah\uu_h|}_r^{\frac{r-2}{2}}
      \norm{\F(\nabla\uu)-\F(\nablah\uu_h)}_2 &\text{if }r\in(2,\infty).
    \end{cases}
  \end{align*}
The first assertion then follows from Theorem~\ref{cr:thm1:eq1}, and,
in the case $r >2$, from the boundedness of Galerkin
solutions~\(\|\nabla \uu\|_r+\|\nablah\uu_h\|_r\lesssim
\|\ff\|_{-1,r'}^{\frac1{r-1}}\), with a hidden constant depending only on
\(r\); compare with~\cite[(2.17)]{Hirn:2013}. The claimed a priori rate follows again from
Theorem~\ref{cr:thm1:eq1} and a Poincar\'{e} inequality.    
\end{proof}

\begin{remark}[Rate of convergence]\label{R:rate-convergence-pressure}
Theorem~\ref{T:pressErr} predicts the same rate of convergence of the
pressure-error as in \cite{Kaltenbach:2023c}, under less restrictive
regularity assumptions and including \(\epsilon=0\); compare also 
with Remark~\ref{R:comparisonV}. The same rate is observed also in the
numerical experiments of \cite{Belenki:2012} where, however, the
theoretical expectation is suboptimal for $r>2$,
cf. \eqref{eq:pressure-error-old}. 
\end{remark}

\section{Numerical experiments}
\label{sec:numerical_experiments}
In this section, we illustrate the performance and the pressure
robustness of the Methods~\ref{M:DD=nabla} and \ref{M:DB=DD1} for the
$r$-Stokes equations, i.e. for  
\begin{equation}\label{p-Stokes}
	\A(\QQ)= |\QQ|^{r-2}\QQ
\end{equation}
in Assumption~\ref{assumption:nonlinear_operator}. Note that the results from \cite{Kaltenbach:2023b,Kaltenbach:2023c} do not apply to this case, as they require the $(r,\ep)$-structure and are not robust in the limit $\varepsilon \to 0.$

To highlight pressure robustness, we compare also with the following counterpart of Method~\ref{M:DD=nabla} without smoothing operator.
\begin{method}\label{M:DD=nabla-without-smoothing}
	For $\ff\in\WW^{-1,r'}(\D)$ compute $\uu_h\in \CRd$ and
	$p_h\in {\wh S}_0^0$ such that
	\begin{alignat*}{2}
		\forall \vv_h&\in\CRd&\qquad\int_{\D} \A(\nablah
		\uu_h):\nablah\vv_h
		- \int_{\D} p_h \divergenz_h\vv_h
		&= \langle \ff, \vv_h\rangle
		,
		\\
		\forall q_h&\in{\wh S}_0^0&\quad\int_{\D} q_h \divergenz_h
		\uu_h
		& =0 .
	\end{alignat*}
\end{method}
\noindent
Recall that we could not apply this method with the gradient replaced by the symmetric gradient, cf. \cite{Arnold:1993}. 

Our implementation is realized with the help of the C library ALBERTA
3.1
\cite{Heine.Koester.Kriessl.Schmidt.Siebert,Schmidt.Siebert:05}. We
solve the nonlinear system of equations stemming from each method via
a relaxed Ka\v{c}anov iteration, based
on~\cite{DieningFornasierTomasiWank:2020,BalciDieningStorn:2023}.
For the
solution of the linearized system, we apply SymmLQ with block diagonal
preconditioning.  

We consider two test cases with prescribed manufactured solution. In
both cases, the domain is a square and the initial mesh $\T_0$ is
obtained by drawing the two main diagonals. We obtain subsequent
meshes $\T_1, \T_2, \dots$ by performing each time two uniform
refinements with newest vertex bisection. We estimate the decay rate
of the velocity- and of the pressure-error with respect to the number
of mesh refinements via the experimental order of convergence,  
\begin{equation*}
	\label{EOC}
	\mathrm{EOC}_k := \log(e_{k-1}/e_k)/\log(2)	
\end{equation*} 
where $e_k$ is the error of interest on $\T_k$ for $k \geq 1$. For
Method~\ref{M:DB=DD1}, we consider e.g. the velocity error with
smoother \(\|\F(\DD\uu)-\F(\DD\EE\uu_h)\|_2\).

\subsection*{Test case 1: power functions}
Following \cite[section~7]{Belenki:2012}, we compute the data so that the solution of \eqref{wp:eq1} with \eqref{p-Stokes} is given by
\begin{equation}
	\label{test-case1}
	\uu(\xx) = |\xx|^{\alpha}(x_2, -x_1) 
	\qquad \text{and} \qquad
	p(\xx) = \eta|\xx|^\gamma  
\end{equation}
for $\xx = (x_1, x_2) \in \Omega = (-1,1)^2$. Different from \eqref{wp:eq1}, we have nonzero Dirichlet datum on $\partial \Omega$, but a similar (undisplayed) test case on the unit circle with zero Dirichlet datum confirmed the results discussed below. In analogy with \cite{Belenki:2012}, we set 
\begin{align*}
\alpha = 0.01, 
\qquad
\gamma = \frac{2}{r} - 1 + 0.01, \qquad 
\eta = \begin{cases}
	0.01 & \text{if } r \in (1,2),\\
	1    & \text{if } r \in [2,\infty).
\end{cases}
\end{align*}
The factor \(\eta\) scales the pressure best-error relative to the
velocity-error  on the right hand
side of~\eqref{eq:presRate} for
\(r\in(1,2)\) such that the suboptimal convergence of the
pressure-error can be better observed in Figure~\ref{F:eoc_pre_power}.  

\begin{figure}[htp]
	\captionsetup[subfigure]{labelformat=empty}
	\hfill
	\subfloat[Method~\ref{M:DD=nabla}]{\includegraphics[width=0.32\hsize]{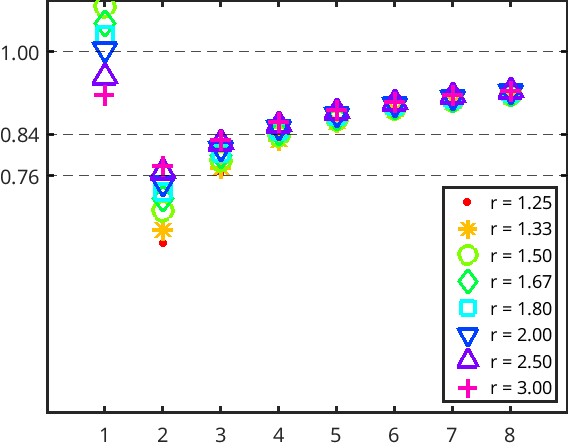}}
	\hfill
	\subfloat[Method~\ref{M:DB=DD1}]{\includegraphics[width=0.32\hsize]{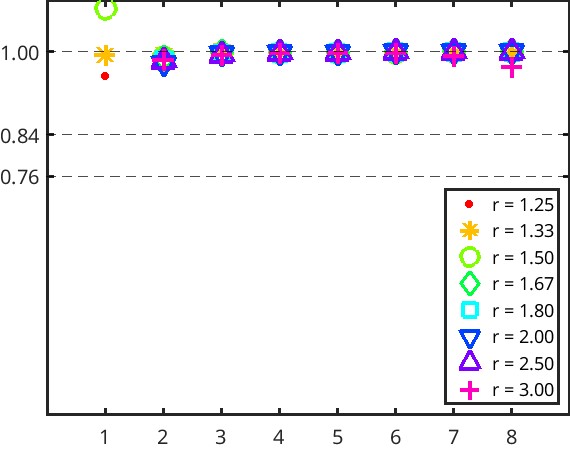}}
	\hfill
	\subfloat[Method~\ref{M:DD=nabla-without-smoothing}]{\includegraphics[width=0.32\hsize]{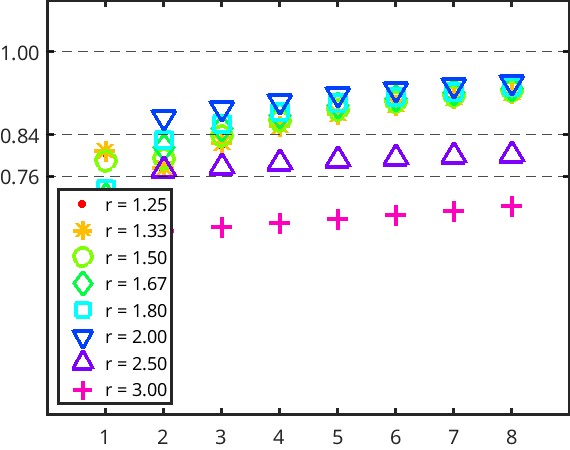}}
	\hfill
	\caption{Test case 1. Experimental order of convergence $\mathrm{EOC}_k$ of the velocity-error versus the number $k$ of mesh refinements.} 
	\label{F:eoc_vel_power}
\end{figure}

\begin{figure}[htp]
	\captionsetup[subfigure]{labelformat=empty}
	\hfill
	\subfloat[Method~\ref{M:DD=nabla}]{\includegraphics[width=0.32\hsize]{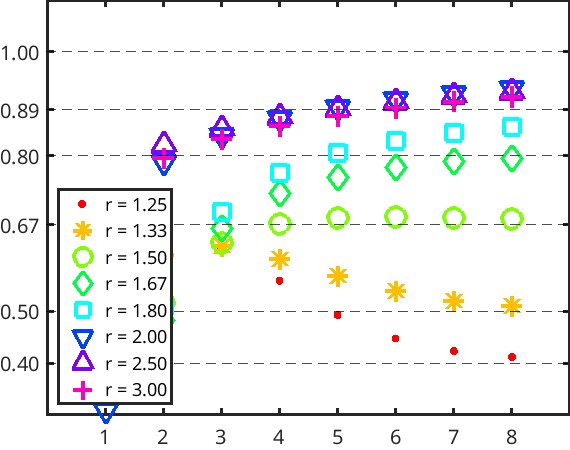}}
	\hfill
	\subfloat[Method~\ref{M:DB=DD1}]{\includegraphics[width=0.32\hsize]{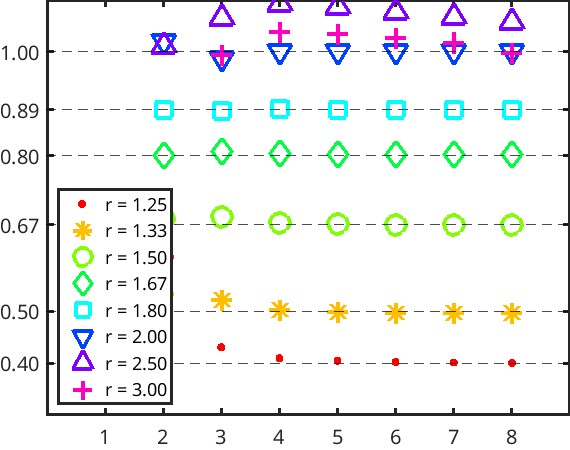}}
	\hfill
	\subfloat[Method~\ref{M:DD=nabla-without-smoothing}]{\includegraphics[width=0.32\hsize]{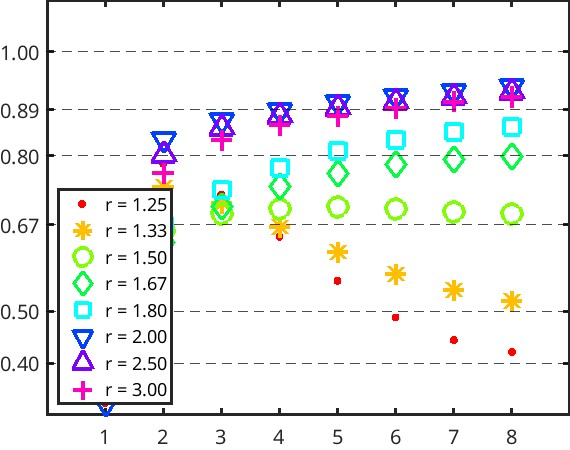}}
	\hfill
	\caption{Test case 1. Experimental order of convergence $\mathrm{EOC}_k$ of the pressure-error versus the number $k$ of mesh refinements.} 
	\label{F:eoc_pre_power}
\end{figure}

Since $\F(\DB\uu) \in
W^{1,2}(\Omega)^{2\times 2}$, we expect that $\mathrm{EOC}_k$ for the
velocity-error of both Method~\ref{M:DD=nabla} and \ref{M:DB=DD1}
converges to $1$ as $k$ increases, owing to Theorems~\ref{cr:thm1}
and~\ref{T:aprioriCRDD1}, respectively. Similarly, since $p \in
W^{1,r'}(\Omega)$ for all $r > 1$, we expect that $\mathrm{EOC}_k$ for
the pressure-error of both methods converges to $\min\{1,
\frac{2}{r'}\}$, as a consequence of Theorem~\ref{T:pressErr}. The
data in Figure~\ref{F:eoc_vel_power} and~\ref{F:eoc_pre_power} (left
and center panel) confirm our expectation. The convergence to the expected value is faster for Method~\ref{M:DB=DD1} than for Method~\ref{M:DD=nabla}. We verified that the gap is somehow related to this specific example and it is less evident if, e.g., we increase the exponent $\alpha$ in \eqref{test-case1}, see also the next test case.

Regarding Method~\ref{M:DD=nabla-without-smoothing}, we observe that
$\mathrm{EOC}_k$ for the velocity-error converges to the suboptimal
value $\min\{1, \frac{r'}{2}\}$ (Figure~\ref{F:eoc_vel_power}, right
panel), which is in line with the estimate \eqref{intro:eq4} for
another method lacking pressure robustness. For the pressure-error
(Figure~\ref{F:eoc_pre_power}, right panel), the results suggest convergence of $\mathrm{EOC}_k$ to $\min\{1, \frac{2}{r'}\}$, as for the other methods. Thus, for $r>2$, the convergence is faster than expected from \eqref{eq:pressure-error-old}, as reported also in \cite{Belenki:2012}.

\subsection*{Test case 2: jumping pressure}
In analogy with \cite[section~6.3]{VerfuerthZanotti:2019} and \cite[section~4.2]{KreuzerVerfuerthZanotti:2021}, we consider a test case with smooth velocity and discontinuous pressure
\begin{equation*}
	\label{test-case2}
	\uu(\xx) = |\xx|^{0.5}(x_2, -x_1) 
	\qquad \text{and} \qquad
	p(\xx) = \begin{cases}
		-\frac{3}{2}, & x_1 < \frac{2}{3}\\
		3, & x_1 > \frac{2}{3}
	\end{cases}
\end{equation*}
for $\xx = (x_1, x_2) \in \Omega = (0,1)^2$. As before, we expect that $\mathrm{EOC}_k$ for the velocity-error of both Method~\ref{M:DD=nabla} and~\ref{M:DB=DD1} converges to $1$ as $k$ increases. In contrast, since $p \in W^{s,r'}(\Omega)$ only for $s < \frac{1}{r'}$, we expect that $\mathrm{EOC}_k$ for the pressure-error of both methods converges to $\frac{1}{r'}$ for all $r > 1$. Our expectation is confirmed by the data in Figure~\ref{F:eoc_vel_pdisc} and~\ref{F:eoc_pre_pdisc} (left and center panel). 

\begin{figure}[htp]
	\captionsetup[subfigure]{labelformat=empty}
	\hfill
	\subfloat[Method~\ref{M:DD=nabla}]{\includegraphics[width=0.32\hsize]{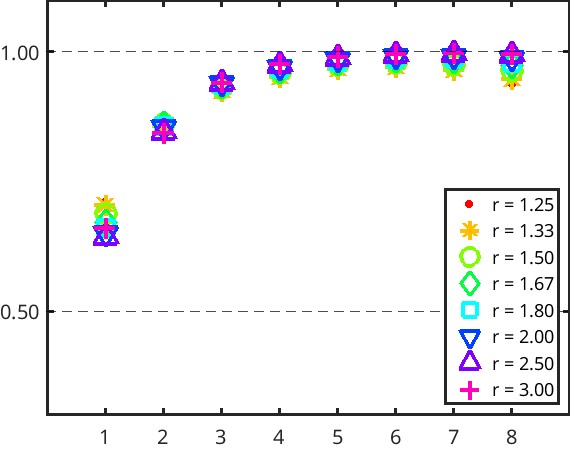}}
	\hfill
	\subfloat[Method~\ref{M:DB=DD1}]{\includegraphics[width=0.32\hsize]{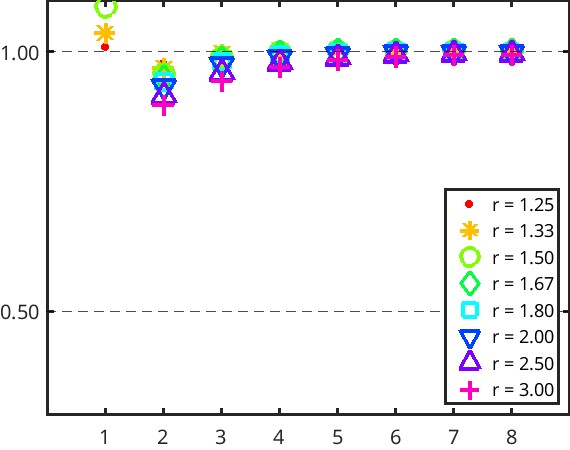}}
	\hfill
	\subfloat[Method~\ref{M:DD=nabla-without-smoothing}]{\includegraphics[width=0.32\hsize]{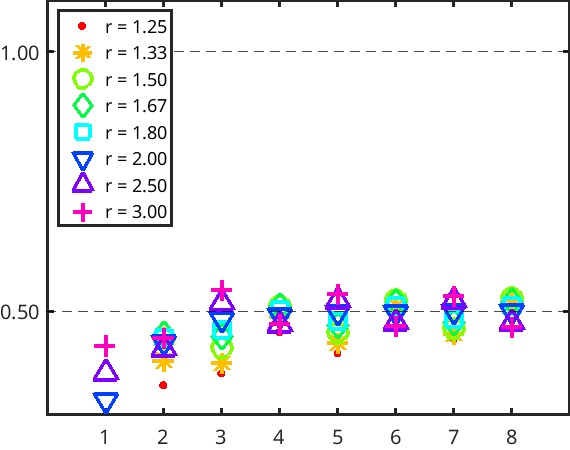}}
	\hfill
	\caption{Test case 2. Experimental order of convergence $\mathrm{EOC}_k$ of the velocity-error versus the number $k$ of mesh refinements.} 
	\label{F:eoc_vel_pdisc}
\end{figure}

\begin{figure}[htp]
	\captionsetup[subfigure]{labelformat=empty}
	\hfill
	\subfloat[Method~\ref{M:DD=nabla}]{\includegraphics[width=0.32\hsize]{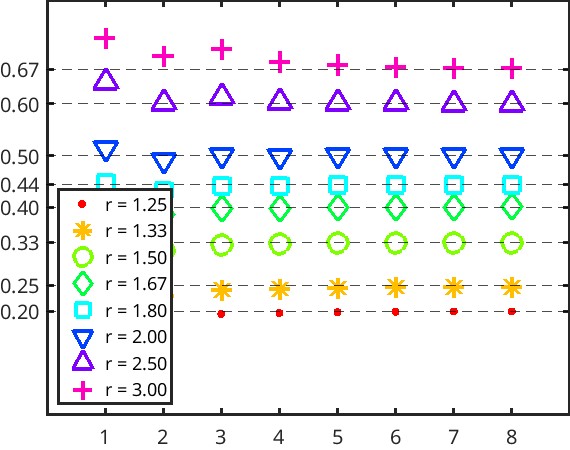}}
	\hfill
	\subfloat[Method~\ref{M:DB=DD1}]{\includegraphics[width=0.32\hsize]{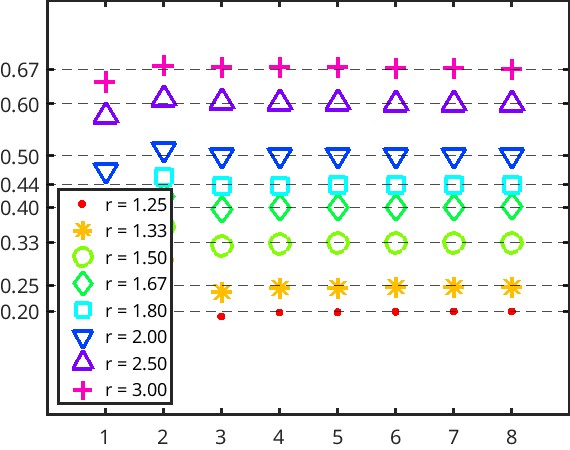}}
	\hfill
	\subfloat[Method~\ref{M:DD=nabla-without-smoothing}]{\includegraphics[width=0.32\hsize]{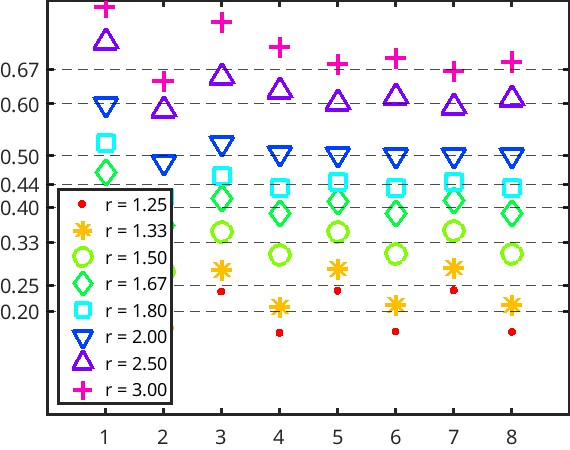}}
	\hfill
	\caption{Test case 2. Experimental order of convergence $\mathrm{EOC}_k$ of the pressure-error versus the number $k$ of mesh refinements.} 
	\label{F:eoc_pre_pdisc}
\end{figure}

Compared with the previous test case, the pressure regularity is much lower and the importance of pressure robustness is even more apparent. Indeed, for the Method~\ref{M:DD=nabla-without-smoothing} (Figure~\ref{F:eoc_vel_pdisc}, right panel), we observe that $\mathrm{EOC}_k$ for the velocity-error converges to the sub-optimal rate $0.5$, irrespective of $r$, whereas for the pressure-error we have similar results as for the other methods (Figure~\ref{F:eoc_pre_pdisc}, right panel).

\section{Conclusions}
\label{sec:conclusions}

We have proposed two numerical methods for the nonlinear Stokes equations~\eqref{intro:eq1} using nonconforming
Crouzeix-Raviart velocity elements and piecewise constant pressure elements (Method~\ref{M:DD=nabla} and
\ref{M:DB=DD1}) 
with \(\DB=\nabla\) and \(\DB=\DD\) respectively. Both schemes feature a
monotone nonlinear operator acting on the velocity and thus the
discrete problems are uniquely solvable; see
Propositions~\ref{P:M-DB=nabla} and ~\ref{P:M-DB=DD1}.
Both methods are
pressure robust. The error estimates in
Theorems~\ref{cr:thm1:eq1},~\ref{T:aprioriCRDD1} and~\ref{T:pressErr}
improve upon existing results for similar methods and are in line with
the best known results for conforming and divergence-free methods.

\subsection*{Funding}
Christian Kreuzer acknowledges funding by the Deutsche
For\-schungs\-ge\-mein\-schaft (DFG, German Research Foundation) --
321270008. Lars Diening research is funded by the Deutsche Forschungsgemeinschaft (DFG, German Research Foundation) - SFB 1283/2 2021.
Pietro Zanotti was supported by the GNCS-INdAM project CUP E53C23001670001.

\printbibliography

\end{document}